\def\public{2}
\newtheorem*{theorem*}{Theorem}
\newtheorem{proposition}{Proposition}
\newtheorem{corollary}[proposition]{Corollary}
\newtheorem{lemma}[proposition]{Lemma}
\theoremstyle{remark}
\newtheorem{remark}[proposition]{Remark}
\newtheorem{example}[proposition]{Example}
\theoremstyle{definition}
\newtheorem{definition}[proposition]{Definition}
\numberwithin{proposition}{section}
\newcommand{\marg}[1]{\normalsize{{\footnote{{#1}}}}{\marginpar[\hfill\tiny\thefootnote$\rightarrow$]{{$\leftarrow$\tiny\thefootnote}}}}
\newcommand{\marg}[1]{}
\begin{document}

\title{Obstruction theories and virutal fundamental classes}
\author{Jonathan Wise}
\subjclass[2010]{14B10, 13D10, 14N35}
\date{\today}
\maketitle

\if\public1
\thispagestyle{fancy}
\fi

\begin{abstract}
We give a new definition of an obstruction theory for infinitesimal deformation theory and relate it to earlier definitions of Artin, Fantechi--Manetti, Li--Tian, and Behrend--Fantechi.
\end{abstract}

\tableofcontents

\section{Introduction} \label{sec:intro}

An infinitesimal deformation problem asks, given some structure $\xi$ over a scheme $S$ and an infinitesimal extension $S'$ of $S$, whether it is possible to extend $\xi$ to a structure of the same type over $S'$.  In any reasonable example, $\xi$ may be viewed as a section of a stack $X$, and the deformation problem can be viewed as one of extending a map:
\begin{equation} \label{eqn:20} \xymatrix{
S \ar[r]^\xi \ar[d] & X \\
S' \ar@{-->}[ur]_{\xi'} .
} \end{equation}
Such a problem may be ``solved'' by describing an obstruction group, $\Obs(S,J)$, depending in a functorial manner on $S$ and $J$, but not on the specific extension $S'$, and an obstruction class $\omega \in \Obs(S,J)$ that does depend on all of the data and is also functorial in $S$ and $J$.  Should $\omega$ vanish, the dashed arrows completing Diagram~\eqref{eqn:20} form a torsor under a second group, $\Def(S, J)$, again depending only on $S$ and $J$.  

While there is a canonical description of the group $\Def(S,J)$, there may be many different different choices for $\Obs(S,J)$.  It was shown by Fantechi and Manetti~\cite{FM} that there is at least a \emph{minimal} example of an obstruction theory, but the minimal example is rarely the most natural or convenient to use; moreover, it is necessary to make use of other obstruction theories to define the virtual fundamental class~\cite{LT,BF} for use in virtual enumerative geometry.

Despite the utility of obstruction theories in the construction of the virtual fundamental class and in Artin's criteria for the algebraicity of stacks, it remains unclear just what an obstruction theory \emph{is}.  There are at least three different definitions in use, all of which are likely inequivalent.  The purpose of this paper is to propose a definition of an obstruction theory that, at least in the author's opinion, captures all of the properties one would want from an obstruction theory without sacrificing any of the generality of the definitions already available.

We will also compare this definition with the others.  The first of these is Artin's, which was give to streamline his criteria for algebraicity of stacks \cite[Section~(2.6)]{versal}.  Fantechi and Manetti generalized Artin's definition to a relative setting~\cite{FM}.\footnote{In fact, Fantechi and Manetti gave a definition that applies even to functions whose ``tangent spaces'' are not even required to be vector spaces!  The definition given here can likely be generalized that far as well, but I have not pursued it since I do not know of an application.}

Obstruction theories usually arise cohomologically, and from this point of view the definition of Artin--Fantechi--Manetti definition is phrased in terms of the cohomology groups of an inexplicit cohomology theory.  The finer structure of a cohomology theory itself proved essential in the definition of the virtual fundamental class.

To capture this finer structure, two roughly dual definitions of a perfect obstruction theory are available.  They were introduced at about the same time by Li--Tian \cite{LT} and Behrend--Fantechi \cite{BF}.  The Li--Tian definition is given in deformation-theoretic terms very close in spirit to Artin's original definition; the essential refinement \cite[Definition~1.3]{LT} is the introduction of a perfect complex whose cohomology groups are the tangent and obstruction spaces.

Behrend and Fantechi also work crucially with perfect complexes but they work dually, relying elegantly on the existence of a cotangent complex $\bL_X$.  From their point of view, an obstruction theory is simply a morphism of complexes $\bE_\bullet \rightarrow \bL_{X}$ whose cone is concentrated in degrees $\leq -2$ \cite[Definition~4.4]{BF}.  The deformation-theoretic content of an obstruction theory is neatly encapsulated in its relation to the cotangent complex, via the cotangent complex's celebrated role in deformation theory \cite{Ill1,Ols-cc,Ols-def}.

An obstruction theory in the sense of Behrend--Fantechi induces one in the sense of Li--Tian and these were shown to give equivalent virtual fundamental classes~\cite{KKP}.  A Li--Tian obstruction theory induces an obstruction theory in the sense of Artin--Fantechi--Manetti:
\begin{equation*}
\begin{pmatrix} \text{Behrend} \\ \text{Fantechi} \end{pmatrix} \Rightarrow \begin{pmatrix} \text{Li} \\ \text{Tian} \end{pmatrix} \Rightarrow \begin{pmatrix} \text{Artin} \\ \text{Fantechi} \\ \text{Manetti} \end{pmatrix} .
\end{equation*}
The definition presented here lies between those of Behrend--Fantechi and Li--Tian and our main result shows that, with a sufficient number of adjectives, it is equivalent to the former:
\begin{theorem*} 
An obstruction theory, in the sense presented here, is induced from a perfect obstruction theory in the sense of Behrend--Fantechi if and only if it is \emph{locally of finite presentation} and \emph{quasi-perfect} (see Definition~\ref{def:lfp}).
\end{theorem*}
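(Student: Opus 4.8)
The plan is to move back and forth between an obstruction theory in the sense presented here and the functors in modules over the test rings that it represents: in one direction the comparison is Illusie's deformation theory applied to the cotangent complex \cite{Ill1}, and in the other it is a representability theorem. Throughout, ``induced from a perfect obstruction theory in the sense of Behrend--Fantechi'' means lying in the essential image of the evident construction $(\bE_\bullet \to \bL_X) \leadsto \mathcal O$, so the theorem is a description of that essential image.

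One implication is quick. Given a perfect $\bE_\bullet \to \bL_X$ with cone in degrees $\le -2$ and the induced obstruction theory $\mathcal O$, its deformation and obstruction groups over an affine $S = \mathrm{Spec} A$ with ideal $J$ are, by Illusie's theory, computed from $\mathrm{Ext}^0_A(\bE_\bullet|_S, J)$ and $\mathrm{Ext}^1_A(\bE_\bullet|_S, J)$; since $\bE_\bullet$ is perfect, hence compact, these commute with filtered colimits in $J$, which is the assertion that $\mathcal O$ is \emph{locally of finite presentation}, and the complex representing $\mathcal O$ is the perfect complex $\bE_\bullet$ itself, so $\mathcal O$ is \emph{quasi-perfect} (Definition~\ref{def:lfp}).

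For the converse, let $\mathcal O$ be a locally-of-finite-presentation, quasi-perfect obstruction theory; I must produce a perfect complex $\bE_\bullet$ on $X$, a morphism $\bE_\bullet \to \bL_X$ with cone concentrated in degrees $\le -2$, and an identification of $\mathcal O$ with the obstruction theory this datum induces. The construction is local, so fix an affine chart $S = \mathrm{Spec} A$. Evaluating $\mathcal O$ on square-zero extensions of $A$ yields, for each $A$-module $J$, the deformation and obstruction groups together with the long exact sequences an obstruction theory provides, all natural in $J$; the hypothesis that $\mathcal O$ is locally of finite presentation says exactly that these functors commute with filtered colimits. By a representability theorem for such functors (of Eilenberg--Watts type) there is a bounded-above, pseudo-coherent complex $\bE_A$ on $S$ with $\mathrm{RHom}_A(\bE_A, -)$ canonically identified with this deformation-obstruction data in the relevant range of degrees, and the quasi-perfect hypothesis bounds the Tor-amplitude of $\bE_A$, so $\bE_A$ is perfect. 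Since $\bE_A$ corepresents a functor intrinsic to $\mathcal O$, the $\bE_A$ are independent of the chart up to canonical isomorphism and descend to a perfect complex $\bE_\bullet$ on $X$. Every obstruction theory receives a canonical comparison from the tautological one, whose deformation and obstruction functors are $\mathrm{Ext}^0$ and $\mathrm{Ext}^1$ of $\bL_X$; reading this off as a natural transformation $\mathrm{RHom}(\bL_X, -) \to \mathrm{RHom}(\bE_\bullet, -)$ and invoking Yoneda produces $\bE_\bullet \to \bL_X$. This map is an isomorphism on $h^0$ and a surjection on $h^{-1}$, because, like any obstruction theory, $\mathcal O$ recovers the genuine deformation torsor and receives the genuine obstruction class --- the condition placing the definition between those of Li--Tian and Behrend--Fantechi --- so its cone is concentrated in degrees $\le -2$. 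Finally, the obstruction theory that $\bE_\bullet \to \bL_X$ induces agrees with $\mathcal O$ essentially by construction, as $\bE_\bullet$ was built to corepresent the same functors.

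The main obstacle is the representability step that reconstructs $\bE_\bullet$ from the functorial obstruction data: one must verify that an obstruction theory carries enough structure --- full long exact sequences, not merely isolated deformation and obstruction groups --- to determine a complex up to quasi-isomorphism, one must match the two finiteness conditions to the two halves of ``perfect'' (\emph{locally of finite presentation} with pseudo-coherence, \emph{quasi-perfect} with bounded Tor-amplitude), and one must check that the charts glue without cocycle ambiguity. By comparison, producing $\bE_\bullet \to \bL_X$ by Yoneda and counting degrees in its cone are routine once the perfect complex is available.
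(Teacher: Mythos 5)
Your overall architecture is right, and two of your three steps do match the paper: the easy direction (a perfect complex represents a locally finitely presented, quasi-perfect theory) is essentially the paper's Corollary~\ref{cor:BF1} plus the observation that $\Psi_{\bE_\bullet}$ has these properties, and your ``Yoneda'' step producing $\bE_\bullet \rightarrow \bL_{X/Y}$ from the comparison $\sN_{X/Y} \rightarrow \sE$, together with the degree count on the cone via full faithfulness, is exactly Corollary~\ref{cor:BF2} (via Lemma~\ref{lem:morphs}).

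The gap is the step you yourself flag as ``the main obstacle'': there is no off-the-shelf Eilenberg--Watts-type representability theorem that reconstructs a complex from the deformation--obstruction data, and that reconstruction is the entire content of Proposition~\ref{prop:perf}. What must be represented is not a pair of half-exact functors $\sE_0(A,-)$, $\sE_1(A,-)$ commuting with filtered colimits---representing those separately as $\Ext^1$ and $\Ext^0$ of some pseudo-coherent complex would not recover the groupoids $\sE(A,J)$ together with their extension structure---but the additively cofibered category itself, as an object of a $2$-category. Your proposal also inverts the logical role of quasi-perfection: it is not a Tor-amplitude bound verified on a complex obtained by other means, but the \emph{source} of the complex. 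The paper's argument runs as follows: quasi-perfection says $\fE$ is a vector bundle stack, which by the biextension duality of the appendix (Proposition~\ref{prop:vb}) is $\Psi_{\bE_\bullet}$ restricted to modules $I \cong A$, for an explicit $\bE_\bullet$ of perfect amplitude $[-1,0]$; this equivalence is then bootstrapped by additivity to free modules, by descent to locally free modules, and by left exactness to complexes of locally free modules; a comparison map $\Psi_{\bE_\bullet} \rightarrow \sE$ is then built from the image of $\id_{\bF_\bullet}$ under $\Psi_{\bE_\bullet}(\bF_\bullet[-1]) \rightarrow \sE(\bF_\bullet[-1])$ for a local free resolution $\bF_\bullet$, with independence of choices (and hence gluing of charts without cocycle ambiguity) secured by a lemma producing \emph{unique} chain homotopies; finally, local finite presentation enters not to prove representability but to reduce checking that this comparison is an equivalence to noetherian $A$ and finite modules $I$, where a d\'evissage by prime filtrations and a five-lemma for additively cofibered categories finishes the proof. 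Without supplying these arguments (or a genuine substitute for them), the converse direction remains unproved.
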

\noindent The theorem is proved as Proposition~\ref{prop:perf} in Section~\ref{sec:perf}.  

While we believe our definition is inequivalent to Li's and Tian's, it is nonetheless close to it in spirit, and the theorem therefore gives a means by which to show that Li--Tian obstruction theories are induced from Behrend--Fantechi obstruction theories.  This application was the original motivation for this paper.

The reason for the disconnect between the definitions of Behrend--Fantechi and Li--Tian comes down to the cotangent complex.  It is a powerful tool for the construction and manipulation of obstruction theories (see, for example, \cite{KKP,Cos,Man}), but it can also be a liability when it must be used in situations not foreseen by Illusie in~\cite{Ill1,Ill2}.

One such situation arose in the theories of relative and degenerate stable maps.  These theories give a good definition of stable maps into mildly singular schemes by restricting the class of deformations with the ``pre-deformability condition'' \cite{Li1}.  This condition is not open in the space of all deformations, and it therefore obscures the relationship between pre-deformable deformations and the cotangent complex of their moduli space.  Because the definition of Li and Tian is more closely related to deformation theory, it becomes easier (but by no means easy!) to define an obstruction theory in the sense of Li--Tian.  This was accomplished by Li in \cite{Li2}.  In order to compare Li's obstruction theory to others defined more easily using the Behrend--Fantechi formalism in \cite{AMW,CMW} (and to a lesser extent, \cite{ACW}), it was necessary to find a definition that incorporates the strengths of both approaches.

Another situation where reliance on the cotangent complex proved a hindrance is the theory of logarithmic stable maps, which was held back for several years by the lack of the logarithmic cotangent complex needed to imitate the Behrend--Fantechi construction of the obstruction theory for stable maps.  The situation was eventually remedied when Olsson defined the logarithmic cotangent complex, and there are now two different moduli spaces of logarithmic stable maps \cite{Kim,Chen,AC,GS}.  However, the construction of the obstruction theory for stable maps in Section~\ref{sec:stable-maps} makes no reference to the cotangent complex, and therefore may be adapted virtually unchanged to the logarithmic setting as well.



The definition given here is based on that of a \emph{left exact, additively cofibered category}, as given by Grothendieck~\cite{ccct} during his early investigation of the cotangent complex.  Additively cofibered categories are an efficient way of describing \emph{additive} $2$-functors from an additive category into the $2$-category of categories.  Infinitesimal deformations naturally form left exact additively cofibered categories, making this a convenient setting in which to treat infinitesimal deformation problems.

Modulo the many functoriality properties that an obstruction theory must satisfy, our definition is the following.  Consider the relative version of Diagram~\eqref{eqn:20}:
\begin{equation} \label{eqn:25} \xymatrix{
S \ar[r] \ar[d] & X \ar[d]^h \\
S' \ar@{-->}[ur] \ar[r] & Y .
} \end{equation}
Suppose that $S = \Spec A$ and $S' = \Spec A'$ are affine and let $J$ be the kernel of the homomorphism $A' \rightarrow A$.  We write $\Exal_X(A, J)$ for the category of commutative diagrams~\eqref{eqn:25}, including the dashed arrow, in which the ideal of $S$ in $S'$ is $J$.  Let $h^\ast \Exal_Y(A,J)$ be the category of such diagrams, excluding the dashed arrow.  An obstruction theory is an additively cofibered category $\sE(A, -)$ on the category of $A$-modules such that the diagrams
\begin{equation} \label{eqn:19} \xymatrix{
\Exal_X(A,J) \ar[r] \ar[d] & e(A,J) \ar[d] \\
h^\ast \Exal_Y(A,J) \ar[r] & \sE(A,J)
} \end{equation}
are cartesian, with $e(A,J)$ denoting the zero object of $\sE(A,J)$.

We can understand the diagram as saying that for any $\xi \in h^\ast \Exal_Y(A,J)$---that is, any commutative diagram of solid lines~\eqref{eqn:25}---there is an associated obstruction $\omega \in \sE(A,J)$ obtained as the image of $\xi$ under the lower horizontal arrow of~\eqref{eqn:19}.  Furthermore, the isomorphisms between $\omega$ and $e(A,J)$ are in bijection, in a specified way, with the lifts of $\xi$ to $\Exal_X(A,J)$---the dashed arrows completing Diagram~\eqref{eqn:18}.  In particular, $\omega$ is \emph{isomorphic} to zero if and only if a solution to the deformation problem exists.

\subsection{Summary of the paper}

The context for the definitions presented here is that of \emph{homogeneous stacks}.  This is generalization of the homogeneous groupoids of Rim \cite{Rim}.  We define homogeneous stacks in Section~\ref{sec:schl} and explain some of their basic properties.

The definition of an obstruction theory is given in Section~\ref{sec:def}, after some background material on additively cofibered categories.  The main results of this paper are contained in Section~\ref{sec:rep}, where we relate our definition to the cotangent complex in~\ref{sec:cc} and to the definition of Behrend--Fantechi in Corollaries~\ref{cor:BF1} and~\ref{cor:BF2}.  Proposition~\ref{prop:perf} gives criterion for an obstruction theory to be representable by a complex of perfect amplitude in degress $[-1,0]$, which is useful in \cite{CMW}.

In Section~\ref{sec:obgrp} we explain how to recover an obstruction sheaf from an obstruction theory as defined here.  This permits us to relate our definition to Artin's and to Li's and Tian's.  In Section~\ref{sec:compat}, we define compatible obstruction theories and relate this notion to the one defined by Manolache~\cite{Man}.  Finally, in Section~\ref{sec:ex},  we give some familiar examples of obstruction theories, viewed from the perspective presented here.  

Perhaps the most interesting example of an obstruction theory is the one for degenerate stable maps that motivated this paper.  It is too long to give a thorough treatment here, and will be described in a separate paper \cite{predef}.  Many details of a related obstruction theory are given in \cite{CMW} as well.  

The appendix contains some background material on bilinear maps of vector bundle stacks.

\subsection{Notational conventions}

As in \cite{versal}, we have tried here to work systematically with covariant functors and cofibered categories over the category of rings instead of contravariant functors and fibered categories over the category of schemes.  In company with \cite{versal} we have not been entirely successful in this respect.

The reason we have used the algebraic language instead of the geometric one is our systematic reliance on the category $\ComRngMod$ of pairs $(A,I)$ where $A$ is a commutative ring and $I$ is an $A$-module.  The opposite of this category, which might most naturally be described as the category of paris $(S, F)$ where $S$ is an affine scheme and $F$ is an object of the \emph{opposite category} of quasi-coherent sheaves on $S$, is a syntactic atrocity.  Furthermore, we will find various categories that have are contravariant with respect to $\ComRng$, but whose natural extensions to schemes are only covariant with respect to \emph{affine} morphisms.  Focusing only on affine schemes allows us to avoid this technicality and shorten a number of statements.

All rings and algebras in this paper are commutative and unital, but we often emphasize the commutativity anyway.  If $A$ is a commutative ring and $J$ is an $A$-module then we write $\tJ$ for the associated sheaf on the small \'etale site of $A$, which is denoted $\et(A)$.  Likewise, $\tA$ denotes the structure sheaf on $\et(A)$.  We employ the same notation for complexes of $A$-modules.

In this paper, the distinction between chain complexes and cochain complexes is purely a matter of notation:  every complex $\bE$ is simultaneously a chain complex $\bE_\bullet$ and a cochain complex $\bE^\bullet$ with $\bE_i = \bE^{-i}$.  When referring to the degrees of a complex, chain or cochain, we always use cohomological degrees.  Therefore to say that a chain complex $\bE_\bullet$ is concentrated in degrees $[-1,0]$ means that $\bE_i = 0$ for $i \not= 0,1$, or, equivalently, that $\bE^i = 0$ for $i \not= -1, 0$.

Finally, note that we often use the following convention:  if $\sE$ is cofibered category over $X\hAlgMod$ (defined in Section~\ref{sec:mod}) then we write $\usE(A,I)$ for the cofibered category over the category $A\hEtAlg$ of \'etale $A$-algebras.  We use the same notation for the corresponding fibered category on the small \'etale site of $\Spec A$.  We also extend this notation to apply to sheaves.

\section{Homogeneous stacks} \label{sec:schl}

We introduce the stacks for which we will describe obstruction theories later in the paper.  Although we are ultimately interested in obstruction theories for moduli spaces that are Deligne--Mumford stacks, these obstruction theories can often be constructed by including these stacks in much larger stacks that cannot be algebraic; moreover, one application of obstruction theories is in demonstrating that stacks are algebraic, so we must have a definition that applies to stacks that are not known a priori to be algebraic.  The stacks we work with will be called \emph{homogeneous}, expanding the terminology employed by \cite{Rim} to a more general setting.

Let $\ComRng$ be the category of commutative rings.  A cofibered category over $\ComRng$ is called a \emph{stack} if it satisfies descent in the \'etale topology.  Note that our stacks are cofibered over commutative rings instead of fibered over schemes, as is more conventional, and the fibers of our stacks are \emph{not} required to be groupoids.

We will say that a cofibered category $F \rightarrow \ComRng$ is \emph{homogeneous} or \emph{satisfies Schlessinger's conditions} if, whenever
\begin{equation*} \xymatrix{
  A' \ar[r] \ar[d] & B' \ar[d] \\
  A \ar[r] & B
} \end{equation*}
is a cartesian diagram in $\ComRng$ and the kernel of $B' \rightarrow B$ is nilpotent, the natural map
\begin{equation*}
  F(A') \rightarrow F(A) \fp_{F(B)} F(B')
\end{equation*}
(which is defined up to unique isomorphism) is an equivalence of categories.  Following \cite[Definition~2.5]{Rim}, this can be put somewhat more intrinsically:
\renewcommand{\labelenumi}{(\roman{enumi})}
\begin{enumerate}
\item if $\eta' \rightarrow \eta$ and $\xi \rightarrow \eta$ are morphism in $F$ such that $p(\eta') \rightarrow p(\eta)$ is an infinitesimal extension, then a fiber product $\xi \fp_\eta \eta'$ exists, and
\item the projection $p : F \rightarrow \ComRng$ preserves such fiber products.
\end{enumerate}

Rim's definition of homogeneity \cite{Rim} efficiently implies all of Schlessinger's criteria \cite[Theorem~2.11]{Schless} simultaneously.  The study of homogeneous groupoids has been taken up again more recently by Osserman (see \cite[Definition~2.7]{Oss}, where ``deformation stacks'' are analogous to homogeneous stacks).

If $X$ is a cofibered category over $\ComRng$, we also write $X\hAlg$ for the underlying category of $X$.  (If $X = \Spec A$ is an affine scheme then $X\hAlg$ is the category of $A$-algebras.)  Accordingly, we refer to the objects of $X\hAlg$ as $X$-algebras.  If $B$ is a commutative ring then we also refer to the objects of $X(B)$ as $X$-algebra structures on $B$.

The following proposition is well-known.
\begin{proposition}
  Any algebraic stack is homogeneous.
\end{proposition}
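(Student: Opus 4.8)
The plan is to reduce to the essentially trivial case of affine schemes and then bootstrap up through schemes, algebraic spaces, and finally algebraic stacks. Write $A' = A \times_B B'$ (forced by the cartesian hypothesis). Since $B' \to B$ is an infinitesimal extension with nilpotent ideal $K = \ker(B' \to B)$, both $\Spec A \hookrightarrow \Spec A'$ and $\Spec B \hookrightarrow \Spec B'$ are nilpotent thickenings (the ideal of the first is $\ker(A' \to A) \cong K$). We must show that $\Psi \colon X(A') \to X(A) \times_{X(B)} X(B')$, sending $\eta$ to $(\eta|_A, \eta|_{B'}, \mathrm{can})$, is an equivalence.

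For $X = \Spec R$ affine this is immediate and requires no nilpotence: $X(-) = \mathrm{Hom}_{\ComRng}(R, -)$ preserves all limits. For $X$ a general scheme, use that $\Spec A'$ has the same underlying space as $\Spec A$: a pair of maps $\xi \colon \Spec A \to X$ and $\xi'' \colon \Spec B' \to X$ agreeing on $\Spec B$ can be covered by basic opens over which both land in a common affine open of $X$; over these one is in the affine case, applied to the localizations $A'_{f'} = A_f \times_{B_{\bar f}} B'_{\bar f'}$ (flat base change commutes with the fiber product), and the resulting local maps glue. Running the same argument on an étale atlas, using étale descent together with the invariance of the small étale site under the thickening $\Spec A \hookrightarrow \Spec A'$, shows that every algebraic space is homogeneous.

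Now let $X$ be an algebraic stack. I would first show that $\Psi$ is fully faithful: for $\eta_1, \eta_2 \in X(A')$ the sheaf $\underline{\mathrm{Isom}}(\eta_1, \eta_2)$ is an algebraic space over $\Spec A'$, and applying the previous step to it gives exactly the assertion that $\Psi$ induces bijections on Hom-sets. For essential surjectivity, fix a smooth surjection $V \to X$ from a scheme and a triple $(\xi, \xi'', \phi) \in X(A) \times_{X(B)} X(B')$. Since smooth surjections admit sections étale-locally on the base, after an étale cover $\Spec A_1 \to \Spec A$ one lifts $\xi$ to $\widetilde\xi_1 \colon \Spec A_1 \to V$; extend $\Spec A_1$ uniquely to an étale $\Spec A_1' \to \Spec A'$ and form the induced $\Spec B_1'$ and $\Spec B_1$. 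Using the infinitesimal lifting property of the smooth morphism $V \to X$ along the nilpotent thickening $\Spec B_1 \hookrightarrow \Spec B_1'$ (refining $\Spec A_1'$ further if necessary), and then the scheme case for $V$ over the cartesian square $A_1' = A_1 \times_{B_1} B_1'$, one obtains $\eta_1 \in X(A_1')$ restricting to $\xi$ over $\Spec A_1$ and to $\xi''$ over $\Spec B_1'$, compatibly with $\phi$. Finally one descends $\eta_1$ to $X(A')$: the transition isomorphism over the double overlap is assembled from its restrictions to the pullbacks of $\Spec A$ and of $\Spec B'$, these agree on the pullback of $\Spec B$ by compatibility with $\phi$, and the full faithfulness already established (applied to the base-changed cartesian squares over the overlaps) both produces a genuine descent datum and verifies its cocycle condition; étale descent for the stack $X$ then yields the required object of $X(A')$.

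The main obstacle is essential surjectivity, and within it the construction of the descent datum: one must lift $\xi$ through the smooth atlas not merely locally but compatibly with the prescribed extension $\xi''$ over $\Spec B'$, invoke formal smoothness to propagate the lift across the thickening, and then recognize --- via the full faithfulness in hand, over the fiber-product squares obtained by base change to the étale overlaps --- that the locally constructed objects glue. Everything else is bookkeeping with $2$-fiber products of categories, together with the single geometric input that $\Spec A' \to \Spec A$ and $\Spec B' \to \Spec B$ are nilpotent thickenings, so that étale-localization is insensitive to replacing the primed rings by the unprimed ones.
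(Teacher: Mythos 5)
Your proof is correct, and it shares the two essential geometric inputs with the paper's argument: local existence comes from choosing a smooth atlas and invoking the infinitesimal lifting criterion for smoothness across the nilpotent thickening, and uniqueness ultimately rests on the representability of the diagonal (your $\underline{\mathrm{Isom}}(\eta_1,\eta_2)$ \emph{is} the diagonal, base-changed). Where the two proofs genuinely diverge is in their organization of the gluing problem. You bootstrap upward (affine $\Rightarrow$ scheme $\Rightarrow$ space $\Rightarrow$ stack), prove full faithfulness of $X(A') \to X(A)\times_{X(B)}X(B')$ as a separate statement via Isom-spaces, and then run an explicit étale descent for essential surjectivity, using full faithfulness over the base-changed cartesian squares to manufacture the descent datum and its cocycle condition. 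The paper instead constructs the object étale-locally and then disposes of all uniqueness and gluing questions at once by a downward induction on the diagonal: two candidate structures on $\oB'$ give an $X\times X$-structure lifting to the diagonal over the known locus, the diagonal is an algebraic space, and the same argument applied to it reduces to embeddings and finally to isomorphisms, where nothing remains to prove. The paper's dévissage is shorter and never writes a cocycle condition; your version is more explicit about what is being descended and makes the full-faithfulness statement (which the paper only proves implicitly, as canonicity of its construction) available as a standalone fact. Your intermediate steps for schemes and algebraic spaces are compressed --- in particular, injectivity on sections for a scheme (which you need to glue the locally constructed maps) deserves the one-line remark that $A' = A\times_B B'$ injects into $A \times B'$ --- but nothing fails; these are the standard arguments.
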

\begin{proof}
Let $X$ be an algebraic stack, suppose that $A' \rightarrow A$ is an infinitesimal extension of $X$-algebras and $B \rightarrow A$ is a homomorphism of $A$-algebras.  We wish to construct a fiber product of $X$-algebras $B' = B \fp_A A'$.

Let $\oA$, $\oA'$, and $\oB$ denote the underlying commutative rings of $A$, $A'$, and $B$, respectively.  Let $\oB'$ be the fiber product of $\oA'$ and $\oB$ over $\oA$.  The problem is to define a canonical $X$-algebra structure on $\oB'$.  

Since $X$ is a stack in the \'etale topology, this is an \'etale-local problem in $\oB'$, and since the \'etale site of $\oB'$ is the same as the \'etale site of $\oB$, it is also an \'etale local problem in $\oB$.  We can therefore assume that the $X$-algebra structure on $\oB$ is induced from a $C$-algebra structure on $\oB$ for some \emph{smooth} $X$-algebra $C$.  By the infinitesimal criterion for smoothness, we can extend the induced $C$-algebra structure on $\oA$ to a $C$-algebra structure on $\oA'$.  From this lift we obtain a \emph{canonical} $C$-algebra structure on $\oB'$ because $C$ is representable by a commutative ring.

By composition, this induces an $X$-algebra structure on $\oB'$.  We must argue that this construction does not depend on the choice of the smooth $X$-algebra $C$.  Indeed, suppose we had two $X$-algebra structures $B'$ and $B''$ on $\oB'$ that induce the given $X$-algebra structures on $\oA$, $\oA'$, and $\oB$.  Then we obtain an $X \times X$-algebra structure on $\oB'$ such that the induced $X \times X$-algebra structures over $\oA$, $\oA'$, and $\oB$ are compatible induced from the diagonal map $X \rightarrow X \times X$.  But $X$ is an algebraic stack so the diagonal is representable by algebraic spaces.  Therefore the proposition is reduced to the corresponding statement for algebraic spaces.

Repeating the argument again with $X$ being an algebraic space (and in particular with the diagonal of $X$ being an embedding), we discover that we can assume $X$ is representable by embeddings of algebraic spaces.  If we repeat the argument once more, we can assume that $X$ is representable by isomorphisms (the diagonal of an embedding being an isomorphism), in which case there is nothing to prove.
\end{proof}

\subsection{Modules over commutative rings} \label{sec:mod}

Let $\ComRngMod$ be the category of pairs $(A, I)$ where $A$ is a commutative ring and $I$ is an $A$-module.  A map $(A, I) \rightarrow (B, J)$ in $\ComRngMod$ is a ring homomorphism $\varphi : A \rightarrow B$ and an $A$-module homomorphism $I \rightarrow J_{[\varphi]}$.  This category is bifibered (i.e., fibered and cofibered) over $\ComRng$ via the projection sending $(A,I)$ to $A$.  The base change functor, with respect to a morphism $\varphi : B \rightarrow A$ sends the pair $(A,I)$ to $I_{[\varphi]}$ (which is in fact a splitting) and the co-base change functor sends $(B, J)$ to $(A, J \tensor_B A)$.

The fibers of $\ComRngMod$ over $\ComRng$ are abelian categories, and $\ComRngMod$ forms a stack over $\ComRng$ in the fpqc topology.

If $A$ is an $X$-algebra then by definition an $A$-module is a module under the  underlying commutative ring of $A$.  We write $X\hAlgMod$ for the category of pairs $(A, I)$ where $A \in X\hAlg$ and $I$ is a $A$-module.  The morphisms in $X\hAlgMod$ are the evident ones:  a map $(A, I) \rightarrow (B,J)$ is a morphism of $X$-algebras $\varphi : A \rightarrow B$ and a morphism of $A$-modules $I \rightarrow J_{[\varphi]}$.  In more sententious terms, $X\hAlgMod$ is the strict fiber product $X\hAlg \fp_{\ComRng} \ComRngMod$.

\subsection{Tangent vectors} \label{sec:tang}

Suppose that $A$ is a commutative ring and $I$ is an $A$-module.  We may form a new commutative ring $A + I$ whose underlying abelian group is $A \times I$ and multiplication is defined by
\begin{equation*}
  (a, x) . (a', x') = (a a', a x' + a' x).
\end{equation*}
So, in particular, $I \subset A + I$ is an ideal that squares to zero.  This is the \emph{trivial square-zero extension} of $A$ by $I$.

Recall that if $\varphi : B \rightarrow A$ is a homomorphism of commutative rings, then the derivations of $B$ into $I_{[\varphi]}$ can be identified with the lifts of the diagram
\begin{equation*} \xymatrix{
& A + I \ar[d] \\
B \ar@{-->}[ur] \ar[r]^{\varphi} & A .
} \end{equation*}
In other words, the $B$-algebra structures on $A + I$ lifting the specified $B$-algebra structure on $A$ can be identified with the $I$-valued tangent vectors of $\Spec B$ at the $A$-point $\Spec A \rightarrow \Spec B$.  This is a commutative group, which we denote $\sT_B(A,I)$.  The purpose of this section is to extend this definition to the case where $B$ is a homogeneous stack $X$, and make explicit the various functoriality properties of $\sT_X$ that will imply the categories $\sT_X(A, I)$ are abelian $2$-groups.

The construction $(A, I) \mapsto A + I$ gives a new functor from $\Phi : \ComRngMod \rightarrow \ComRng$ (recall that we also have the projection which forgets $I$).  This second functor is faithful  since a morphism $(A, I) \rightarrow (B, J)$ can be recovered from the induced map $A + I \rightarrow B + J$, but it is not full.

Let $X$ be a cofibered category over $\ComRng$.  We may define a cofibered category $\sT_X$ to be the strict fiber product $X\hAlg \fp_{\ComRng} (\ComRngMod, \Phi)$.  The objects of $\sT_X$ are triples $(A, I, \varphi)$ where $(A, I) \in X\hAlgMod$ and $\varphi$ is an $X$-algebra structure on $A + I$.  A morphism in $\sT_X$ from $(A, I, \varphi)$ to $(B, J, \psi)$ is a morphism $u : (A, I) \rightarrow (B, J)$ and a morphism of $X$-algebras $(A + I, \varphi) \rightarrow (B + J, \varphi)$ lifting the map $A + I \rightarrow B + J$ induced from $u$.

\begin{proposition} \label{prop:tang-cof}
  Let $X$ be a cofibered category over $\ComRng$.
  \begin{enumerate}[label=(\roman{*}), ref=(\roman{*})]
  \item $\sT_X$ is cofibered over $X\hAlgMod$ via the projection sending $(A, I, \varphi)$ to $(A, I)$. \label{der:1}
  \item $\sT_X$ is cofibered over $X\hAlg$ via the projection sending $(A, I, \varphi)$ to $A$. \label{der:2}
  \end{enumerate}
  Assume that $X$ is also homogeneous.
  \begin{enumerate}[label=(\roman{*}), ref=(\roman{*}), resume]
  \item $\sT_X$ is fibered over $X\hAlg$ via the projection sending $(A, I, \varphi)$ to $A$. \label{der:3}
  \item For each $A \in X\hAlg$, the category $\sT_X(A)$ is left exact and additively cofibered over $A\hMod$. \label{der:4}
  \item For any $(A, I) \in X\hAlgMod$, the category $\sT_X(A, I)$ is naturally equipped with the structure of an abelian $2$-group. \label{der:5}
  \end{enumerate}
\end{proposition}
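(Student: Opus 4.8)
The plan is to derive (i)--(ii) from the formalism of cofibered categories and strict fiber products and to derive (iii)--(v) by repeatedly feeding trivial square-zero extensions into Schlessinger's condition. For (i) and (ii): $\sT_X$ is by construction the strict fiber product $X\hAlg\fp_{\ComRng}(\ComRngMod,\Phi)$ and $X\hAlg\to\ComRng$ is cofibered, so the cocartesian lift of a morphism $(A,I)\to(B,J)$ at an object $(A,I,\varphi)$ is $(B,J,\psi)$ with $\psi\in X(B+J)$ the pushforward of $\varphi$ along the ring map $A+I\to B+J$ induced by the morphism. The only thing to check is compatibility with the coarser bases $\ComRngMod\leftarrow X\hAlgMod\to X\hAlg$: since pushforward is functorial, pushing $\varphi$ forward along $A+I\to B+J$ simultaneously pushes its restriction $\pi_*\varphi\in X(A)$ forward along $A\to B$, so these projections carry cocartesian arrows to cocartesian arrows, which gives (i) and (ii).

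For (iii), homogeneity enters for the first time. Given a morphism $(A,\alpha)\to(B,\beta)$ of $X$-algebras over $g\colon A\to B$ and an object $(B,J,\psi)$ over $(B,\beta)$, the projection $B+J\to B$ is an infinitesimal extension, so by the intrinsic form of Schlessinger's condition the fiber product $\alpha\fp_\beta\psi$ exists in $X$ and is preserved by $X\to\ComRng$; it therefore lies over $A\fp_B(B+J)=A+J_{[g]}$. Taking $I=J_{[g]}$ and $\varphi=\alpha\fp_\beta\psi$, the first projection $\varphi\to\psi$ is a morphism of $X$-algebras over $A+I\to B+J$, and I would check, by matching the universal property of $\alpha\fp_\beta\psi$ against the two-part description of morphisms of $\sT_X$, that it is cartesian over $X\hAlg$ and that the second projection exhibits $\pi_*\varphi\cong\alpha$. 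It is probably cleanest to isolate first the auxiliary statement that, for an infinitesimal extension $B'\to B$ with kernel $K$, pushforward $X(B')\to X(B)$ is bifibered---the inputs to Schlessinger's condition being the cartesian square $B'\fp_B B'\cong B'+K$ and its module-indexed variants---and then to read off both (iii) and the fibered structure over $A\hMod$ used in (iv).

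For (iv), fix $A\in X\hAlg$; restricting (i) makes $\sT_X(A)$ cofibered over $A\hMod$. The equivalence $\sT_X(A,I\oplus J)\simeq\sT_X(A,I)\times\sT_X(A,J)$, additivity of the pushforward functors, and triviality of the fiber $\sT_X(A,0)$ (since $A+0=A$) all come from Schlessinger's condition applied to $A+(I\oplus J)\cong(A+I)\fp_A(A+J)$; left exactness is the same computation for $A+(I\fp_K J)\cong(A+I)\fp_{A+K}(A+J)$, which gives $\sT_X(A,I\fp_K J)\simeq\sT_X(A,I)\fp_{\sT_X(A,K)}\sT_X(A,J)$. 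Together with the fibered structure these are exactly the axioms of a left exact additively cofibered category over $A\hMod$ in the sense of \cite{ccct}, which is (iv). Part (v) is then formal: the fiber of such a category over an object $I$ of its additive base is canonically an abelian $2$-group, the addition induced by $I\oplus I\to I$, inverses by $-\mathrm{id}_I$, and all coherences forced by functoriality and the left-exactness equivalences; here I would cite \cite{ccct} and specialize.

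The step I expect to be the main obstacle is (iii): checking that the fiber product $\alpha\fp_\beta\psi$ supplied abstractly by Schlessinger's condition really is a cartesian lift in the precise, two-part sense demanded by the definition of morphisms in $\sT_X$---in particular that $\pi_*(\alpha\fp_\beta\psi)\cong\alpha$, and that it is the first rather than the second projection that is cartesian---and organizing this bookkeeping, ideally through the auxiliary bifiberedness of pushforward along infinitesimal extensions. After that, parts (iv) and (v) are essentially a translation of homogeneity through Grothendieck's formalism.
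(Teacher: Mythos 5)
Your proposal is correct and follows essentially the same route as the paper: cocartesian lifts via pushforward along $A+I\to B+J$ for (i)--(ii), homogeneity applied to $A\fp_B(B+J)=A+J_{[u]}$ for (iii) (the paper phrases this via the equivalence $X(A+J_{[u]})\to X(A)\fp_{X(B)}X(B+J)$ restricted to the fiber over $\xi$, which is the same content as your intrinsic fiber-product formulation), the cartesian square of trivial extensions for (iv), and a citation of Grothendieck's formalism for (v).
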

\begin{proof}
  \ref{der:1} The functor $\sT_X(A, I) \rightarrow \sT_X(B, J)$ associated to a morphism $u : (A, I) \rightarrow (B, J)$ in $X\hAlgMod$ sends $(A, I, \varphi)$ to $(B, J, v_\ast \varphi)$ where $v : A + I \rightarrow B + J$ is the morphism induced from $u$.

  \ref{der:2} $X\hAlgMod$ is cofibered over $X\hAlg$ and $\sT_X$ is cofibered over $X\hAlgMod$ by Part~\ref{der:1}.

  \ref{der:3} Let $u : A \rightarrow B$ be a homomorphism of commutative rings, and let $J$ be a $B$-module.  Suppose that $\xi \rightarrow \eta$\marg{maybe notation for $\xi$ and $\eta$ should be changed} is a morphism of $X$-algebras lifting $u$.  By homogeneity, the map
  \begin{equation*}
    X(A + J_{[u]}) \rightarrow X(A) \fp_{X(B)} X(B + J)
  \end{equation*}
  is an equivalence.  Restricting, via the projection to $X(A)$, to the fiber over $\xi$, we get an equivalence
  \begin{equation*}
    \sT_X(\xi, J_{[u]}) \rightarrow \sT_X(\eta, J) .
  \end{equation*}
  An inverse to this functor gives the base change functor $\sT_X(\eta, J) \rightarrow \sT_X(\xi, J_{[u]})$.

  \ref{der:4} Let $A$ be a commutative ring.  Suppose that
    \begin{equation*}
      0 \rightarrow I \rightarrow J \rightarrow K \rightarrow 0
    \end{equation*}
    is an exact sequence of $A$-modules.  Then we have a cartesian diagram of rings
    \begin{equation*} \xymatrix{
      A + I \ar[r] \ar[d] & A + J \ar[d] \\
      A \ar[r] & A + K .
    } \end{equation*}
    We therefore deduce that the diagram
    \begin{equation*} \xymatrix{
      \sT_X(A, I) \ar[r] \ar[d] & \sT_X(A, J) \ar[d] \\
      X(A) \ar[r] & \sT_X(A, K)
    } \end{equation*}
    is cartesian by homogeneity.  Restricting to the fiber over $\xi \in X(A)$, we get a cartesian diagram
    \begin{equation*} \xymatrix{
      \sT_X(\xi, I) \ar[r] \ar[d] & \sT_X(\xi, J) \ar[d] \\
      \set{ \xi } \ar[r] & \sT_X(\xi, K)
    } \end{equation*}
    which means that the sequence
    \begin{equation*}
      0 \rightarrow \sT_X(\xi, I) \rightarrow \sT_X(\xi, J) \rightarrow \sT_X(\xi, K) 
    \end{equation*}
is exact.
 
  \ref{der:5} An immediate consequence of \ref{der:4} (in view of \cite[Section~1.4]{ccct} or \cite{gst}).
\end{proof}

\subsection{Square-zero extensions} \label{sec:sq-zero}

Recall that a square-zero extension of a commutative algebra $A$ by an $A$-module $I$ is a surjection of commutative algebras $A' \rightarrow A$ whose kernel consists of elements that square to zero, and an isomorphism between the kernel and $I$, with its $A'$-module structure induced by the projection.  Let $\Exal$ be the category of triples $(A, I, A')$ where $A$ is a commutative ring, and $A'$ is a square-zero extension of $A$ by $I$.  The projection $\Exal \rightarrow \ComRng$ makes $\Exal$ into a fibered category over the category of commutative rings.  There is also a projection $\Exal \rightarrow \ComRngMod$ sending $(A, I, A')$ to $(A, I)$.

If $X$ is a cofibered category over $\ComRng$, define $\Exal_X$ to be the category of triples $(A, I, A')$ where $A$ is an $X$-algebra and $A'$ is a square-zero $X$-algebra extension of $A$ by $I$.  A morphism from $(A, I, A')$ to $(B, J, B')$ is a pair of compatible morphisms of $X$-algebras $\varphi : A \rightarrow B$ and $A' \rightarrow B'$ (which induce a morphism of $A$-modules $I \rightarrow J_{[\varphi]}$.  The following is a more precise form of this definition:  let $\Phi : \Exal \rightarrow \ComRng$ be the projection sending $(A, A')$ to $A'$.  If $X$ is a cofibered category over $\ComRng$, then $\Exal_X = X \fp_{\ComRng} (\Exal, \Phi)$. 

\begin{proposition} \label{prop:def-cof}
  Let $X$ be a cofibered category over $\ComRng$.
  \begin{enumerate}[label=(\roman{*}), ref=(\roman{*})]
  \item $\Exal_X$ is cofibered over $\Exal$. \label{exal:5}
  \item $\Exal_X$ is cofibered over \'etale morphisms in $X\hAlgMod$.  \label{exal:1}
  \end{enumerate}
  Assume that $X$ is also homogeneous.
  \begin{enumerate}[label=(\roman{*}), ref=(\roman{*}), resume]
  \item $\Exal_X$ is fibered over $\ComRng$ and the map $\Exal_X \rightarrow \ComRngMod$ is cartesian. \label{exal:2}
  \item For each $A \in \ComRng$, the category $\Exal_X(A)$ is left exact and additively cofibered over $A\hMod$. \label{exal:3}
  \item For any $(A, I) \in \ComRngMod$, the category $\Exal_X(A, I)$ is naturally equipped with the structure of an abelian $2$-group. \label{exal:4}
  \end{enumerate}
\end{proposition}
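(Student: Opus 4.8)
The plan is to follow the proof of Proposition~\ref{prop:tang-cof} closely. Parts~\ref{exal:5} and~\ref{exal:1} require only that $X$ be cofibered over $\ComRng$, together with standard commutative algebra; parts~\ref{exal:2}, \ref{exal:3} and~\ref{exal:4} extract the force of homogeneity by carrying the cartesian squares of commutative rings produced by square-zero extensions into cartesian squares of categories.

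For part~\ref{exal:5}, $\Exal_X$ is by definition the strict base change $X \fp_{\ComRng} (\Exal, \Phi)$ of the cofibered category $X \to \ComRng$ along $\Phi \colon \Exal \to \ComRng$, and cofibered categories are stable under strict base change; thus $\Exal_X$ is cofibered over $\Exal$, a cocartesian lift of a morphism $(A, I, A') \to (B, J, B')$ being obtained by pushing the $X$-algebra structure on $A'$ forward along $A' \to B'$. For part~\ref{exal:1}, let $(A, I, A', \xi')$ be an object of $\Exal_X$ and $(A, I) \to (B, J)$ an \'etale morphism of $X\hAlgMod$. Topological invariance of the small \'etale site produces a unique \'etale $A'$-algebra $B'$ with $B' \tensor_{A'} A \cong B$; flatness of $B'$ over $A'$ identifies $\ker(B' \to B)$ with $I \tensor_A B \cong J$, so that $(B, J, B') \in \Exal$, and pushing $\xi'$ forward along $A' \to B'$ gives a lift of $(A, I, A', \xi')$ whose cocartesianness follows from the universal properties of the \'etale lift $B'$ and of the cocartesian arrow in $X$. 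Homogeneity plays no role in either argument, consistent with these parts being stated before the homogeneity hypothesis.

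Now suppose $X$ is homogeneous. For part~\ref{exal:2} I would imitate the proof of Proposition~\ref{prop:tang-cof}\ref{der:3}: to base change $(A, I, A', \xi')$ along a morphism $u \colon B \to A$ together with a lift of $u$ to a morphism of $X$-algebras (as in that proof), set $B' = A' \fp_A B$ and apply the homogeneity equivalence $X(B') \xrightarrow{\ \sim\ } X(A') \fp_{X(A)} X(B)$, which is available because $A' \to A$ has nilpotent kernel, to obtain $\xi'' \in X(B')$ and hence the base-changed object $(B, I_{[u]}, B', \xi'')$; since the module in play is exactly the restriction $I_{[u]}$, the functor $\Exal_X \to \ComRngMod$ carries these cartesian arrows to cartesian arrows, which is precisely the assertion that it is cartesian. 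For part~\ref{exal:3}, the distinguished object of $\Exal_X(A, I)$ is the trivial square-zero extension $A + I$ carrying the $X$-algebra structure pushed forward from $\xi$ along $A \to A + I$; the additively cofibered structure over $A\hMod$ comes from pushing square-zero extensions out along module homomorphisms, using only that $X$ is cofibered; and left exactness is proved as in Proposition~\ref{prop:tang-cof}\ref{der:4}, by considering a short exact sequence $0 \to I \to J \to K \to 0$ of $A$-modules and the pushout functor $\Exal_X(\xi, J) \to \Exal_X(\xi, K)$ and observing that, over the trivial extension, the square-zero ideal $I \subset A'$ and homogeneity identify the fibre with $\Exal_X(\xi, I)$ via $A' \mapsto A' \fp_{A + K} A$, so that $0 \to \Exal_X(\xi, I) \to \Exal_X(\xi, J) \to \Exal_X(\xi, K)$ is exact. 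Part~\ref{exal:4} is then an immediate consequence of part~\ref{exal:3} in view of \cite[Section~1.4]{ccct}, exactly as Proposition~\ref{prop:tang-cof}\ref{der:5} follows from Proposition~\ref{prop:tang-cof}\ref{der:4}.

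The main obstacle is the coherence bookkeeping in parts~\ref{exal:2} and~\ref{exal:3}: the homogeneity equivalences are determined only up to canonical isomorphism, so the genuine work is to verify that the base-change functors compose correctly and that the various identifications of fibres are mutually compatible --- that one has really produced a fibered category and a cartesian functor out of it, not merely the underlying functors between fibres. I expect this to go through routinely, since it is the same bookkeeping already carried out for $\sT_X$ in Proposition~\ref{prop:tang-cof}.
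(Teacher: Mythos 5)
Your proposal matches the paper's proof in all essentials: part (i) by stability of cofibered categories under base change, part (ii) by topological invariance of the \'etale site together with the infinitesimal criterion (the paper additionally factors a general \'etale morphism of $X\hAlgMod$ into a cocartesian one followed by a module pushout, which your sketch should absorb routinely), parts (iii)--(iv) by turning the cartesian squares of rings coming from square-zero extensions into cartesian squares of categories via homogeneity, and part (v) by citing Grothendieck. One small correction to your accounting in part (iv): the additivity axiom $\Exal_X(\xi, I \oplus J) \simeq \Exal_X(\xi, I) \times \Exal_X(\xi, J)$ does \emph{not} follow from cofiberedness alone --- essential surjectivity requires recovering an extension by $I \oplus J$ as the fibre product $\eta' \fp_{\xi} \omega'$ of its two pushouts, which is precisely where homogeneity enters (and why this clause of the proposition is stated after the homogeneity hypothesis); since your left-exactness argument already invokes the same fibre-product device, nothing in your plan actually breaks.
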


\begin{remark}
Although $\Exal_X$ and $\sT_X$ have some similar functoriality properties, the forms of Propositions~\ref{prop:tang-cof} and~\ref{prop:def-cof} are not identical.  This is because $\sT_X(A,I)$ is naturally related to the deformations of $X$, while $\Exal_X(A,I)$ is naturally related to the deformations of $A$.  This can be seen using the cotangent complex, provided one is available:  we can identify the isomorphism classes in $\sT_X(A,I)$ with $\Ext^0(\bL_X, I)$ and we can identify the isomorphism classes in $\Exal_X(A,I)$ with $\Ext^1(\bL_{A/X}, I)$.
\end{remark}

\begin{proof}[Proof of Proposition~\ref{prop:def-cof}]
  \ref{exal:5} This follows immediately from the fact that $X\hAlg$ is cofibered over $\ComRng$.

  \ref{exal:1} We have a commutative diagram
\begin{equation*} \xymatrix{
\Exal_X \ar[r] \ar[d] & \Exal \ar[d] \\
X\hAlgMod \ar[r] & \ComRngMod .
} \end{equation*}
To show that $\Exal_X$ is cofibered over \'etale maps in $X\hAlgMod$, it suffices to show that $\Exal_X$ is cofibered over \'etale maps in $\ComRngMod$, since $X\hAlgMod$ is cofibered over $\ComRngMod$.  Furthermore, $\Exal_X$ is cofibered over $\Exal$ by~\ref{exal:5}, so it is sufficient to show that $\Exal$ is cofibered over \'etale maps in $\ComRngMod$.

One can factor a morphism in $\ComRngMod$ in a unique way into a morphism that is cocartesian over $\ComRng$ followed by a morphism in $A\hMod$ for some commutative ring $A$.  Pushout of exact sequences immediately demonstrates that $\Exal(A)$ is cofibered over $A\hMod$, so the problem reduces to showing that $\Exal$ is cocartesian over \'etale morphisms of $\ComRng$.

  Let $A \rightarrow B \rightarrow C$ be a sequence of morphisms in $\ComRng$ with $A \rightarrow B$ \'etale, and let $\xi \rightarrow \omega$ be a morphism of $\Exal$ over $A \rightarrow C$.  We must show there is a morphism $\xi \rightarrow \eta$ in $\Exal(A \rightarrow B)$, depending only on $\xi$ and the map $A \rightarrow B$, through which the map $\xi \rightarrow \omega$ factors uniquely.  We can represent $\xi$ by a square-zero extension $A'$ of $A$, and $\omega$ by a square-zero extension $C'$ of $C$.  By \cite[Th\'eor\`eme~1.1]{sga4-VIII}, the tensor product induces an equivalence from \'etale $A'$-algebras to \'etale $A$-algebras.  Therefore there is an \'etale $A'$-algebra $B'$, determined uniquely up to unique isomorphism, and a cocartesian diagram
\begin{equation*} \xymatrix{
  A' \ar[r] \ar[d] & B' \ar[d] \\
  A \ar[r] & B.
} \end{equation*}
To check the universality of $B'$, we must show that the commutative diagram of solid lines
\begin{equation*} \xymatrix{
A' \ar@/^10pt/[drr] \ar[d] \ar[dr] \\
A \ar@/^10pt/[drr] \ar[dr] & B' \ar@{-->}[r] \ar[d] & C' \ar[d] \\
& B \ar[r] & C
} \end{equation*}
can be completed in a unique way by a dashed arrow.  This is equivalent to showing that the diagram
  \begin{equation*} \xymatrix{
    C & \ar[l] \ar@{-->}[dl] B' \\
    C' \ar[u] & A' \ar[l] \ar[u] 
  } \end{equation*}
  can be completed in a unique way by a dashed arrow.  But the map $A' \rightarrow B'$ is \'etale and $C' \rightarrow C$ is an infinitesimal extension, so the dashed arrow and its uniqueness are guaranteed by the infinitesimal criterion for being \'etale \cite[D\'efinition~(19.10.2)]{ega4-1}.

  \ref{exal:2} Note first that $\Exal$ is fibered over $\ComRng$ by pullback of exact sequences.  This proves the claim when $X$ is a point.  To prove the general case, we must show that, given any sequence of morphisms $\xi \rightarrow \eta \rightarrow \omega$ in $X$ and square-zero extensions $\xi' \rightarrow \xi$ and $\omega' \rightarrow \omega$, there is a square-zero extension $\eta' \rightarrow \eta$, independent of $\xi$, and a unique factorization of $\xi' \rightarrow \omega'$ through $\eta'$:
  \begin{equation*} \xymatrix{
    \xi' \ar@/^10pt/[drr] \ar@{-->}[dr] \ar[d] \\
    \xi \ar@/^10pt/[drr] \ar[dr] & \eta' \ar[d] \ar[r] & \omega' \ar[d] \\
    & \eta \ar[r] & \omega
  } \end{equation*}
  The diagram illustrates that we are searching for a fiber product $\eta' = \eta \fp_{\omega} \omega'$, which is provided by the homogeneity of $X$.
 
  \ref{exal:3} First we show that if $\xi$ is an $X$-algebra with underlying commutative ring $A$ then $\Exal_X(\xi)$ is additively cofibered over $A\hMod$.  We must show that $\Exal_X(\xi, 0) = 0$, which amounts to noting that $\Exal_X(A, 0) \rightarrow X(A)$ is an equivalence.  We must also check that 
\begin{equation} \label{eqn:3}
  \Exal_X(\xi, I \oplus J) \rightarrow \Exal_X(\xi, I) \times \Exal_X(\xi, J)
\end{equation}
is an equivalence.  We note now that if $\xi' \in \Exal_X(\xi, I \oplus J)$ and $\eta' \in \Exal_X(A, I)$ and $\omega' \in \Exal_X(A, J)$ are the objects induced from $\xi'$, then there is a cartesian diagram
  \begin{equation*} \xymatrix{
    \xi' \ar[r] \ar[d] & \eta' \ar[d] \\
    \omega' \ar[r] & \xi
  } \end{equation*}
  in $X\hAlg$.  Therefore $\xi'$ can be recovered up to unique isomorphism as $\eta' \fp_{\xi} \omega'$, which implies that the morphism displayed in Equation~\eqref{eqn:3} is an equivalence.

  To demonstrate left exactness, suppose that
  \begin{equation*}
    0 \rightarrow I \rightarrow J \rightarrow K \rightarrow 0
  \end{equation*}
  is an exact sequence of $\xi$-modules for an $X$-algebra $\xi$ with underlying commutative ring $A$.  We must show that the sequence
  \begin{equation*}
    0 \rightarrow \Exal_X(\xi, I) \rightarrow \Exal_X(\xi, J) \rightarrow \Exal_X(\xi, K)
  \end{equation*}
  is exact.  Recall that the exactness of this diagram is equivalent to the following diagram's being cartesian:
  \begin{equation} \label{eqn:2} \xymatrix{
    \Exal_X(A, I) \ar[r] \ar[d] & \Exal_X(A, J) \ar[d] \\
    X(A) \ar[r] & \Exal_X(A, K)
  } \end{equation}
  where the maps $\Exal_X(A, I) \rightarrow X(A) \rightarrow \Exal_X(A, K)$ come from the identification $X(A) \simeq \Exal_X(A, 0)$.  An object of the fiber product can be identified with a diagram of solid lines
  \begin{equation} \label{eqn:1} \xymatrix{
    \xi' \ar@{-->}[r] \ar@{-->}[d] & \eta' \ar[d] \\
    \xi \ar[r] & \omega'
  } \end{equation}
  inside $X\hAlg/\xi$ where $\eta' \in X(A, J)$, $\omega' \in X(A, K)$, and $\eta' \rightarrow \omega'$ is cocartesian over $A\hMod$.  By homogeneity, this diagram has a fiber product $\xi'$ completing the diagram above.  Let 
  \begin{equation*} \xymatrix{
    A'_1 \ar[r] \ar[d] & A'_2 \ar[d] \\
    A \ar[r] & A'_3
  } \end{equation*}
  be the image of the cartesian diagram~\eqref{eqn:1} in $\ComRng$.  By homogeneity, this diagram is cartesian.  Furthermore, $A'_2$ is a square-zero extension of $A$ by $J$ and $A'_3$ is a square-zero extension of $A$ by $K$.  It follows that $A'_1$ is a square-zero extension of $A$ by $I$.  This demonstrates an inverse to the natural map $\Exal_X(A, I) \rightarrow \Exal_X(A, J) \fp_{\Exal_X(A, J)} X(A)$ and therefore shows that Diagram~\eqref{eqn:2} is cartesian.

  \ref{exal:4} An immediate consequence of~\ref{exal:3} and \cite{ccct} or \cite{gst}.
\end{proof}

\section{Obstruction theories} \label{sec:def}

\subsection{Additively cofibered categories} \label{sec:acc}

Let $X$ be a stack.  A \emph{stack of cofibered categories}\marg{not sure if this is the right name} on $X$ is a cofibered category $\sE$ over $X\hAlgMod$ that is a stack over $X\hAlg$ in the \'etale topology.  We call it \emph{left exact} and \emph{additively cofibered} if, for each $X$-algebra $A$, the cofibered category $\sE(A)$ over $A\hMod$ has the corresponding property.  Note that this implies that each $\sE(A, I)$ possesses a zero object $e(A, I)$, well-defined up to unique isomorphism, and these can be combined into a cocartesian section $e$ of $\sE$ over $X\hAlgMod$.  We will call a stack of additively cofibered categories \emph{cartesian} if it is also cartesian over the arrows of $X\hAlgMod$ that are cartesian over $X\hAlg$.

Suppose that $\sE$ is a stack of additively cofibered categories on $X\hAlgMod$.  We call $\sE$ \emph{exact} if it is left exact and, for any $X$-algebra $A$, and any exact sequence
\begin{equation*}
0 \rightarrow I \rightarrow J \rightarrow K \rightarrow 0,
\end{equation*}
the map $\sE(A,J) \rightarrow \sE(A,K)$ is \emph{locally} essentially surjective.  This means that for any section $\xi$ of $\sE(A,K)$, the collection of $A$-algebras $B$ such that $\xi \tensor_A B$ can be lifted, up to isomorphism, to $\sE(B, J \tensor_A B)$ constitutes an \'etale cover of $A$.

Let $v : (A,I) \rightarrow (B, J)$ be a morphism of $X\hAlgMod$ that is cartesian over $X\hAlg$.  We have a morphism of $X$-algebras $u : A \rightarrow B$ and an induced isomorphism $I \simeq J_{[u]}$.  Then $v$ induces functors $\Phi : \sE(B,J) \rightarrow \sE(A,I)$ and $\Psi : \sE(A,I) \rightarrow \sE(B,J)$, each well-defined up to unique isomorphism, that must be \emph{adjoint}:  for each $x \in \sE(A,I)$ and $y \in \sE(B,J)$, we have
\begin{equation*}
\Hom_{\sE(A,I)}(x, \Phi(y)) = \Hom_f(x,y) = \Hom_{\sE(B,J)}(\Psi(x), y) .
\end{equation*}
But both $\sE(A,I)$ and $\sE(B,J)$ are groupoids by \cite[1.5~a)]{ccct}, so $\Phi$ and $\Psi$ must be mutually inverse functors.  
\begin{proposition} \label{prop:cart}
Let $\sE$ be a cofibered category over $X\hAlgMod$.  The following conditions are equivalent.
\begin{enumerate}[label=(\roman{*}), ref=(\roman{*})]
\item $\sE$ is cartesian over the morphisms of $X\hAlgMod$ that are cartesian over $X\hAlg$. \label{cart:1}
\item $\sE$ is cartesian over $X\hAlg$. \label{cart:4}
\item For every map $(A,I) \rightarrow (B,J)$ of $X\hAlgMod$ that is cartesian over $X\hAlg$, the functor $\sE(A,I) \rightarrow \sE(B,J)$ is an equivalence. \label{cart:2}
\item For every $X$-algebra homomorphism $f : A \rightarrow B$ and every $B$-module $J$, the functor $\sE(A,J_{[f]}) \rightarrow \sE(B,J)$ is an equivalence. \label{cart:3}
\end{enumerate}
\end{proposition}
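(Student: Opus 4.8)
I would prove the proposition by establishing the three equivalences $\ref{cart:1}\Leftrightarrow\ref{cart:2}$, $\ref{cart:2}\Leftrightarrow\ref{cart:3}$, and $\ref{cart:4}\Leftrightarrow\ref{cart:3}$, which together connect all four conditions. Two facts will be used throughout. First, each fiber $\sE(A,I)$ is a groupoid (by \cite[1.5~a)]{ccct}, as recalled just before the proposition), so that an adjunction between two fibers is automatically an equivalence; I will also use the standard remark that, for any morphism $v$ of $X\hAlgMod$, the cocartesian lift of $v$ at an object is cartesian over $v$ exactly when the induced pushforward functor on fibers is fully faithful. Second, the following bookkeeping about the bifibration $X\hAlgMod\to X\hAlg$: for a homomorphism $f\colon A\to B$ of $X$-algebras and a $B$-module $J$, the morphism $w_f\colon(A,J_{[f]})\to(B,J)$ given by $f$ and the identity of $J_{[f]}$ is cartesian over $X\hAlg$; every morphism of $X\hAlgMod$ that is cartesian over $X\hAlg$ is such a $w_f$ precomposed with an isomorphism of its source; and every morphism of $X\hAlgMod$ with target $(B,J)$ lying over a composite $C\xrightarrow{h}A\xrightarrow{f}B$ factors uniquely through $w_f$ by a morphism lying over $h$. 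Granting this, $\ref{cart:2}\Leftrightarrow\ref{cart:3}$ is formal: $\ref{cart:3}$ is the case $v=w_f$ of $\ref{cart:2}$, and conversely any $v$ cartesian over $X\hAlg$ is an isomorphism of $X\hAlgMod$ followed by some $w_f$, while $\sE$ carries isomorphisms of $X\hAlgMod$ to equivalences of fibers.

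For $\ref{cart:1}\Leftrightarrow\ref{cart:2}$ I would essentially reproduce the discussion preceding the proposition. Let $v\colon(A,I)\to(B,J)$ be cartesian over $X\hAlg$. Since $\sE$ is cofibered over $X\hAlgMod$, $v$ induces a pushforward functor $\Psi\colon\sE(A,I)\to\sE(B,J)$; if moreover $\sE$ is cartesian over $v$, then $v$ also induces a pullback functor $\Phi\colon\sE(B,J)\to\sE(A,I)$ with $\Psi$ left adjoint to $\Phi$, and since the fibers are groupoids $\Psi$ and $\Phi$ are mutually inverse equivalences; this gives $\ref{cart:2}$. Conversely, if $\Psi$ is an equivalence it is in particular fully faithful, so by the remark above the cocartesian lift $x\to\Psi(x)$ of $v$ is cartesian over $v$; and since $\Psi$ is essentially surjective, every object of $\sE(B,J)$ is isomorphic to some $\Psi(x)$ and hence receives a cartesian arrow over $v$. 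Thus $\sE$ is cartesian over $v$, and since every morphism cartesian over $X\hAlg$ differs from such a $v$ only by an isomorphism of its source, $\ref{cart:1}$ follows.

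It remains to relate $\ref{cart:4}$ — the condition that the composite $\sE\to X\hAlg$ be a fibered category — to $\ref{cart:3}$. The key observation is that for any $x\in\sE(A,J_{[f]})$ the cocartesian lift $x\to(w_f)_!x$ of $w_f$ is automatically cartesian for $\sE\to X\hAlg$ over $f$: given an arrow $z\to(w_f)_!x$ over a factorization $C\xrightarrow{h}A\xrightarrow{f}B$, one factors its underlying morphism of $X\hAlgMod$ through $w_f$ using the remark and applies the cocartesian property of $x\to(w_f)_!x$ over $w_f$. In particular, arguing as in the previous paragraph with ``cocartesian'' replaced by ``cartesian'', the pushforward functor $(w_f)_!\colon\sE(A,J_{[f]})\to\sE(B,J)$ appearing in $\ref{cart:3}$ is always fully faithful. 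Conversely, a cartesian arrow for $\sE\to X\hAlg$ over $f$ with target $y\in\sE(B,J)$ is forced to have its source isomorphic to an object $x$ of $\sE(A,J_{[f]})$ and to realize $y$, up to isomorphism, as $(w_f)_!x$ — one factors its underlying morphism of $X\hAlgMod$ through $w_f$, observes that the resulting comparison morphism of $\sE$ is an isomorphism, and uses that $\sE(B,J)$ is a groupoid. Consequently $\sE\to X\hAlg$ is fibered if and only if every object of $\sE(B,J)$ lies in the essential image of $(w_f)_!$, which, given the full faithfulness just noted, holds if and only if $(w_f)_!$ is an equivalence for every $f$ and $J$; this is $\ref{cart:3}$.

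I expect the main difficulty to be organizational rather than conceptual. One must keep careful track of which base category — $X\hAlgMod$ or $X\hAlg$ — each (co)cartesian condition is taken over, and distinguish the pushforward, pullback, and cartesian-lift functors that occur; all of the hom-set identifications ultimately hinge on the bookkeeping in the remark on $w_f$, especially on the point that a morphism of $\sE$ lying over $f$ and inducing the identity on $J_{[f]}$ is the same thing as a morphism lying over $w_f$. Once that remark and the groupoid property of the fibers are isolated, each of the implications above is a short diagram chase, and I do not anticipate any single step presenting a genuine obstacle.
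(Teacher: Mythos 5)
Your first two paragraphs are sound and essentially reproduce the paper's treatment: the adjunction-between-groupoids argument for \ref{cart:1} $\Rightarrow$ \ref{cart:2} is exactly the discussion preceding the proposition, \ref{cart:2} $\Leftrightarrow$ \ref{cart:3} is the same formal reduction of general cartesian morphisms to the maps $w_f$, and your converse \ref{cart:2} $\Rightarrow$ \ref{cart:1} is correct.

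The gap is in the third paragraph. The ``key observation'' --- that the cocartesian lift $x \to (w_f)_! x$ is \emph{automatically} cartesian for $\sE \to X\hAlg$, and hence that $(w_f)_!$ is \emph{always} fully faithful --- is false, and the justification offered for it is circular. Given an arrow $z \to (w_f)_! x$ over $C \to A \to B$, factoring its underlying morphism of $X\hAlgMod$ through $w_f$ produces a morphism $(C,K) \to (A,J_{[f]})$ in the \emph{base}, not a factorization of the arrow itself in $\sE$; lifting that factorization to $\sE$ is precisely the assertion that $x \to (w_f)_! x$ is cartesian over the morphism $w_f$ of $X\hAlgMod$, which (as your own second paragraph shows) is equivalent to the full faithfulness of $(w_f)_!$ you are trying to derive. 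The cocartesian property of $x \to (w_f)_! x$ factors morphisms \emph{out of} $x$, not morphisms \emph{into} $(w_f)_! x$, so it cannot be invoked here. A concrete counterexample to the claim: let $\sE$ be the category of triples $(A,I,a)$ with $a$ an element of $A$, where a morphism $(A,I,a) \to (B,J,b)$ is a morphism $(f,u)$ of $X\hAlgMod$ with $f(a) = b$. This is cofibered over $X\hAlgMod$ with discrete fibers, and $(w_f)_!$ acts on objects by $a \mapsto f(a)$, which is not injective in general, so $(w_f)_!$ is not fully faithful. Consequently your reduction of \ref{cart:4} to essential surjectivity of $(w_f)_!$ alone does not go through. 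The substantive implication \ref{cart:3} $\Rightarrow$ \ref{cart:4} has to be proved as in the paper: use essential surjectivity of $\Psi_f = (w_f)_!$ to produce the candidate arrow $\varphi : x \to y$ over $f$, decompose $\Hom_g(z,x) \to \Hom_{fg}(z,y)$ according to the module maps $K \to J_{[fg]}$, push $z$ forward cocartesianly to reduce to morphisms within the fibers, and only then invoke the \emph{hypothesis} that $\Psi_f$ is an equivalence (full faithfulness included) to conclude.
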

\begin{proof}
The implication \ref{cart:1} \implies \ref{cart:2} was demonstrated before the statement of the proposition and \ref{cart:2} \implies \ref{cart:3} is obvious.  The equivalence \ref{cart:1} \iff \ref{cart:4} holds because $X\hAlgMod$ is cartesian over $X\hAlg$.


We prove \ref{cart:3} \implies \ref{cart:4}.  Let $f : A \rightarrow B$ be a homomorphism of $X$-algebras and let $J$ be a $B$-module.  Let $y$ be an object of $\sE(B,J)$.  Let $I = J_{[f]}$.  We show that there is an object $x \in \sE(A,J_{[f]})$ and a morphism $x \rightarrow y$ of $\sE$ that is cartesian above $X\hAlg$.  Since the co-base change functor $\Psi : \sE(A,I) \rightarrow \sE(B,J)$ is an equivalence, there is an object $x \in \sE(A,I)$ such that $\Psi(x)$ is isomorphic to $y$.  Choosing such an $x$ and such an isomorphism, we get an object of $\Hom_{\sE(B,J)}(\Psi(x), y) = \Hom_f(x,y)$, which gives the required map $\varphi : x \rightarrow y$.  It remains to demonstrate that $\varphi$ is cartesian over $X\hAlg$.

Let $g : C \rightarrow A$ be a homomorphism of $X$-algebras and suppose that $z \in \sE(C,K)$ for some $C$-module $K$.  We must show that the natural map
\begin{equation} \label{eqn:26}
\Hom_{g}(z,x) \rightarrow \Hom_{fg}(z,y)
\end{equation}
is a bijection.  Associated to any $\alpha \in \Hom_g(z,x)$ we have a map $(C,K) \rightarrow (B,J)$, hence a homomorphism of $C$-modules $K \rightarrow J_{[fg]} = I_{[f]}$.  Hence we can decompose~\eqref{eqn:26} as
\begin{equation*}
\coprod_{u : K \rightarrow I_{[f]}} \Hom_{(g,u)}(z,x) \rightarrow \coprod_{u : K \rightarrow J_{[fg]}} \Hom_{(fg, u)}(z,y) .
\end{equation*}
It suffices to demonstrate that the map is a bijection on each component.  Now, because $\sE$ is cocartesian over $X\hAlgMod$, there must be a morphism $z \rightarrow w$ of $\sE$ that is cocartesian over the map $(g,u) : (C,K) \rightarrow (C,I_{[f]})$.  The horizontal arrows in the commutative diagram
\begin{equation*} \xymatrix{
\Hom_{(g,\id_{I_{[f]}})}(w,x) \ar[r] \ar[d] & \Hom_{(g,u)}(z,x) \ar[d] \\
\Hom_{(fg,\id_{I_{[f]}})}(w,y) \ar[r] &  \Hom_{(fg,u)}(z,y)
}\end{equation*}
are both bijections, so it will now suffice to demonstrate that the vertical arrow on the left is a bijection.  

Use the notation
\begin{gather*}
\Psi_f : \sE(A,I) \rightarrow \sE(B,J) \\
\Psi_g : \sE(C,I_{[f]}) \rightarrow \sE(A,I) \\
\end{gather*}
for the co-base change functors.  We have
\begin{align*}
\Hom_{fg}(w,y) & = \Hom_{\sE(B,J)}(\Psi_{f} \Psi_g(w), y) & & \text{because $w \rightarrow \Psi_f \Psi_g(w)$ is cocartesian} \\
& = \Hom_{\sE(A,I)}(\Psi_{g} (w), x) & & \text{because $\Psi_f$ is an equivalence} \\ & & & \qquad \text{and $\Psi_f(x) \rightarrow y$ is an isomorphism} \\
& = \Hom_g(w,x) & & \text{because $w \rightarrow \Psi_g(w)$ is cocartesian} .
\end{align*}
\end{proof}

\begin{definition} \label{def:obs}
Let $h : X \rightarrow Y$ be a morphism of homogeneous stacks over $\ComRng$.  An obstruction theory for $X$ over $Y$ is a cartesian stack of left exact additively cofibered categories $\sE$ over $X$ equipped with a cartesian diagram
\begin{equation} \label{eqn:6}\xymatrix{
  \Exal_X \ar[r] \ar[d] & e \ar[d] \\
  h^\ast \Exal_Y \ar[r] & \sE
} \end{equation}
in which the horizontal arrows preserve
\begin{enumerate}
\item arrows that are cocartesian over \'etale maps in $\ComRngMod$, and
\item arrows that are cartesian over $\ComRng$.
\end{enumerate}
\end{definition}

\subsection{The relative intrinsic normal stack} \label{sec:ins}

To specify Diagram~\eqref{eqn:6} should be the same as to give a morphism
\begin{equation*}
h^\ast \Exal_Y / \Exal_X \rightarrow \sE ,
\end{equation*}
provided one can give a satisfactory definition of the source.  We indicate how this can be done when $X$ and $Y$ are algebraic stacks and $X \rightarrow Y$ is of \emph{Deligne--Mumford type}, meaning that for any $Y$-algbra $A$, the stack $X_A$ on $A$-algebras is a Deligne--Mumford stack.

We make some abbreviations in this section.  If $A$ is an $X$-algebra, then we write $\Spec A$ for the spectrum of its underlying commutative ring, equipped with a map to the stack $X$.  We also write $\et(A)$ in place of $\et(\Spec A)$ for the small \'etale site of $\Spec A$.  If $J$ is an $A$-module, we write $\tJ$ for the associated sheaf on $\et(A)$.  Thus $\tA$ is the structure sheaf on $\et(A)$.


Assume first that both $X$ and $Y$ are Deligne--Mumford stacks.  Then the map $h : X \rightarrow Y$ can be represented by a morphism of \'etale locally ringed topoi $(h, \varphi) : (\et(X), \cO_X) \rightarrow (\et(Y), \cO_Y)$.  
\begin{definition} \label{def:ins}
Suppose that $X \rightarrow Y$ is a morphism of Deligne--Mumford stacks.  Define $\sN_{X/Y}$ to be the category of triples $(C, J, \sA)$ where $(C,J) \in X\hAlgMod$, corresponding to a morphism $f : \et(C) \rightarrow \et(X)$ and a homomorphism $\varphi : f^\ast \cO_X \rightarrow \tC$ of sheaves of rings, and $\sA$ is a square-zero extension of $f^\ast \cO_X$ by $\tJ_{[\varphi]}$ as a $f^\ast h^\ast \cO_Y$-algebra.
\end{definition}



Let $X\hEtAlg$ be the category of \'etale $X$-algebras.  Then we can restrict the projections $\sN_{X/Y} \rightarrow X\hAlg$ and $h^\ast \Exal_Y \rightarrow X\hAlg$ via the inclusion $X\hEtAlg \rightarrow X\hAlg$.  
\begin{lemma}
The restrictions of $\sN_{X/Y}$ and $h^\ast \Exal_Y$ to $X\hEtAlg$ are equivalent.
\end{lemma}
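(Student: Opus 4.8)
The plan is to establish the equivalence fiber by fiber over $X\hEtAlg$ and then to check naturality, so that the fiberwise equivalences assemble into an equivalence of cofibered categories over $X\hEtAlg$. Fix an \'etale $X$-algebra $C$ and a $C$-module $J$, and let $f\colon\et(C)\to\et(X)$ together with $\varphi\colon f^\ast\cO_X\to\tC$ be the corresponding data. Since $\Spec C\to X$ is \'etale, the small \'etale site $\et(C)$ is canonically the localized site $\et(X)_{/\Spec C}$, the functor $f^\ast$ is restriction of sheaves, $\varphi$ is an isomorphism, and $f^\ast h^\ast\cO_Y$ is the restriction of $\cO_Y$ along $\et(C)\to\et(X)\to\et(Y)$---equivalently, the sheaf of rings through which the $Y$-algebra structure on $C$ factors. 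Unwinding Definition~\ref{def:ins}, an object of $\sN_{X/Y}$ over $(C,J)$ is thus a sheaf of rings $\sA$ on $\et(C)$ fitting into a square-zero extension $0\to\tJ\to\sA\to\tC\to 0$ of sheaves of $f^\ast h^\ast\cO_Y$-algebras, while an object of $h^\ast\Exal_Y$ over $(C,J)$ is a square-zero extension $0\to J\to C'\to C\to 0$ of $Y$-algebras.

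The comparison functor $h^\ast\Exal_Y(C,J)\to\sN_{X/Y}(C,J)$ sends such a $C'$ to the structure sheaf $\widetilde{C'}$ on $\et(C')=\et(C)$ (a square-zero thickening does not change the small \'etale site); this is a square-zero extension of $\tC$ by $\tJ$, and a sheaf of $f^\ast h^\ast\cO_Y$-algebras because $C'$ is a $Y$-algebra compatibly with $C$. It is manifestly natural in $(C,J)\in X\hEtAlg$, so it remains only to check that it is an equivalence on each fiber. For essential surjectivity one invokes the standard fact that every square-zero extension $\sA$ of the structure sheaf as above comes from a ring: $\sA\cong\widetilde{C'}$ for a ring $C'$, necessarily a square-zero extension of $C$ by $J$, which inherits a $Y$-algebra structure---read off from the $f^\ast h^\ast\cO_Y$-algebra structure on $\sA$ by taking global sections over the terminal object $\Spec C$---restricting to the given one on $C$. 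Full faithfulness then holds because $\Gamma(\et(C),-)$ is inverse to $C'\mapsto\widetilde{C'}$, compatibly with ring and $\cO_Y$-algebra structures.

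The one non-formal ingredient here is the claim that a square-zero extension of the structure sheaf, as a sheaf of $\cO_Y$-algebras, always comes from a ring; this is the content underlying Illusie's treatment of $\mathbf{Exal}$ in \cite{Ill1}, and it rests on the fact that nilpotent thickenings of affine schemes are affine, together with \'etale descent for quasi-coherent modules. This point, and the bookkeeping needed to make the identifications $\et(C)\simeq\et(X)_{/\Spec C}$, $f^\ast\cO_X\cong\tC$, $\et(C')=\et(C)$ and the $\cO_Y$-algebra structures compatible with base change in $(C,J)$, are where I expect the work to lie. The reason the statement is restricted to $X\hEtAlg$ is precisely that $\varphi$ fails to be an isomorphism for a general (non-\'etale) $X$-algebra $C$, so that then $\sN_{X/Y}(C,J)$---which parametrizes ``intrinsic'' square-zero extensions of $f^\ast\cO_X$---is strictly larger than $h^\ast\Exal_Y(C,J)$.
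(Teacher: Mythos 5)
Your proof is correct and follows essentially the same route as the paper's: identify $f^\ast\cO_X\cong\tC$ for \'etale $C$, then match sheaf-level square-zero $f^\ast h^\ast\cO_Y$-algebra extensions of $\tC$ by $\tJ$ with ring-level $Y$-algebra extensions of $C$ by $J$ via global sections. The paper states these two steps in two sentences without elaboration; your version simply supplies the bookkeeping and flags the one non-formal ingredient (that such sheaf extensions of the structure sheaf come from rings), which is exactly where the content lies.
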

\begin{proof}
Suppose that $C$ is an \'etale $X$-algebra.  Let $i : \et(C) \rightarrow \et(X)$ be the canonical map.  Then the sheaves $\tC$ and $i^\ast \cO_X$ on $\et(C)$ are canonically isomorphic.  Moreover, if $J$ is a $C$-module, then extensions of $\tC$ by $\tJ$ as a $i^\ast h^\ast \cO_Y$-algebra are equivalent by taking global sections to extensions of $C$ by $J$ as a $Y$-algebra.
\end{proof}

Let $(B, I, \sB)$ and $(C, J, \sC)$ be objects of $\sN_{X/Y}$ with $u : \et( B) \rightarrow \et(X)$ and $v : \et( C) \rightarrow \et(X)$ the corresponding morphisms of \'etale sites.  A morphism $(B, I, \sB) \rightarrow (C, J, \sC)$ of $\sN_{X/Y}$ is a morphism $f : (B,I) \rightarrow (C,J)$ of $X\hAlgMod$, corresponding to a map $w : \et( C) \rightarrow \et( B)$, and a morphism of extensions $w^\ast \sC \rightarrow \sB$ that is compatible with the $Y$-algebra structures and the maps $w^\ast \tC \rightarrow \tB$ and $w^\ast \tJ \rightarrow \tI$ induced from $f$.\marg{is it clear enough what that means?}

There is a projection $\sN_{X/Y} \rightarrow X\hAlgMod$ sending a triple $(C, J, \sA)$  to $(C,J)$, as in Definition~\ref{def:ins}.


\begin{proposition}
\begin{enumerate}[label=(\roman{*}), ref=(\roman{*})]
\item \label{ins:1} $\sN_{X/Y}$ is cocartesian over $X\hAlgMod$.
\item \label{ins:2} $\sN_{X/Y}$ is cartesian over $X\hAlg$ and the projection $\sN_{X/Y} \rightarrow X\hAlgMod$ preserves morphisms that are cartesian over $X\hAlg$.
\item \label{ins:3} For each $X$-algebra $C$, the cofibered category $\sN_{X/Y}(C)$ over $C\hMod$ is additively cofibered and left exact.
\end{enumerate}
\end{proposition}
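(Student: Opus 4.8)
The plan is to recognize $\sN_{X/Y}$ as a sheaf-theoretic incarnation of $\Exal$, carried out internally to the small \'etale topoi of $X$-algebras, and to transport the arguments of Proposition~\ref{prop:def-cof} together with the classical facts on square-zero extensions in a topos \cite{Ill1}.  An $X$-algebra $C$ determines a morphism $f\colon\et(C)\to\et(X)$ and a ring homomorphism $\varphi\colon f^\ast\cO_X\to\tC$, and $\sN_{X/Y}(C)$ is by definition the category of square-zero extensions of the sheaf of rings $f^\ast\cO_X$, as $f^\ast h^\ast\cO_Y$-algebras, by sheaves of the form $\tJ$ with $J$ a $C$-module, regarded as $f^\ast\cO_X$-modules through $\varphi$.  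Since $J\mapsto\tJ$ is exact and carries finite direct sums and the zero module to themselves, and since every morphism between sheaves of this form is induced by a unique $C$-module homomorphism, restricting to such modules is harmless, and one may apply the formalism of \cite{Ill1} inside $\et(C)$.

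For \ref{ins:3} one reruns the proof of Proposition~\ref{prop:def-cof}\ref{exal:3} internally to $\et(C)$:  $\sN_{X/Y}(C)$ is cofibered over $C\hMod$ by pushout of square-zero extensions; $\sN_{X/Y}(C,0)$ is the zero object $f^\ast\cO_X$; the functor $\sN_{X/Y}(C,I\oplus J)\to\sN_{X/Y}(C,I)\times\sN_{X/Y}(C,J)$ is an equivalence, an inverse being fiber product over $f^\ast\cO_X$; and for an exact sequence $0\to I\to J\to K\to 0$ of $C$-modules, $\sN_{X/Y}(C,I)$ is recovered from $\sN_{X/Y}(C,J)$ together with a trivialization of its image in $\sN_{X/Y}(C,K)$, so $0\to\sN_{X/Y}(C,I)\to\sN_{X/Y}(C,J)\to\sN_{X/Y}(C,K)$ is exact.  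For \ref{ins:1} I factor a morphism of $X\hAlgMod$ into a cocartesian arrow over $X\hAlg$ followed by an arrow over the identity of some $X$-algebra, the latter being \ref{ins:3}.  For the former, an $X$-algebra homomorphism $B\to C$ induces a morphism of topoi $w\colon\et(C)\to\et(B)$ with $v=uw$ (writing $u$, $v$ for the structure morphisms of $B$, $C$), whence $v^\ast\cO_X=w^\ast u^\ast\cO_X$; given $(B,I,\sB)$, the functor $w^\ast$ is exact and symmetric monoidal, so $w^\ast\sB$ is a square-zero extension of $v^\ast\cO_X$ by $w^\ast\widetilde I$, still a $v^\ast h^\ast\cO_Y$-algebra, and pushing it out along the canonical map $w^\ast\widetilde I\to\widetilde{I\tensor_B C}$ (and, for a general morphism, further along $\widetilde{I\tensor_B C}\to\tJ$) produces the required cocartesian lift.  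The argument runs as in the proof of Proposition~\ref{prop:def-cof}\ref{exal:1}, the point being that exactness of $w^\ast$ for arbitrary $w$ removes the restriction to \'etale morphisms present there.

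The substantive part is \ref{ins:2}.  For an $X$-algebra homomorphism $B\to C$ and a $C$-module $J$ I construct a quasi-inverse to the co-base change functor $\sN_{X/Y}(B,J_{[\varphi]})\to\sN_{X/Y}(C,J)$ attached to the cartesian arrow $(B,J_{[\varphi]})\to(C,J)$ of $X\hAlgMod$; applied to $\sC$ it produces the desired cartesian lift, which visibly lies over a restriction-of-scalars arrow and thereby yields the last clause of the statement.  With $w\colon\et(C)\to\et(B)$ as above, the quasi-inverse sends a square-zero extension $\sC$ of $v^\ast\cO_X$ by $\tJ$ to
\[
\sB\;:=\;u^\ast\cO_X\;\fp_{\,w_\ast w^\ast u^\ast\cO_X}\;w_\ast\sC ,
\]
the fiber product along the unit $u^\ast\cO_X\to w_\ast w^\ast u^\ast\cO_X$ and the map induced by $\sC\to v^\ast\cO_X$.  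The crucial point is that $\Spec C\to\Spec B$ is affine, so $R^{1}w_\ast\tJ=0$; this makes $w_\ast\sC\to w_\ast v^\ast\cO_X$ surjective and hence $\sB\to u^\ast\cO_X$ surjective with kernel $w_\ast\tJ=\widetilde{J_{[\varphi]}}$, the last identity being the statement that the direct image of a quasi-coherent sheaf along an affine morphism is restriction of scalars.  Thus $\sB$ is a square-zero extension of $u^\ast\cO_X$ by $\widetilde{J_{[\varphi]}}$, naturally a $u^\ast h^\ast\cO_Y$-algebra through the unit $u^\ast h^\ast\cO_Y\to w_\ast w^\ast u^\ast h^\ast\cO_Y$, and the adjunction $w^\ast\dashv w_\ast$ supplies the comparison map $w^\ast\sB\to\sC$ and the verification that the two functors are mutually quasi-inverse, in the spirit of Proposition~\ref{prop:def-cof}\ref{exal:2}.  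This is the main obstacle:  the direct image $w_\ast$ is not exact, so affineness of $\Spec C\to\Spec B$ is exactly what guarantees that $\sB$ is a genuine square-zero extension whose kernel is the sheaf of a $B$-module, and since $v^\ast\cO_X$ is in general not quasi-coherent this must be read off from the quasi-coherent kernel $\tJ$ rather than from $\sC$ itself; the remaining verifications are then formal.
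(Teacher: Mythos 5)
Your proposal is correct and follows essentially the same route as the paper: for \ref{ins:1} the cocartesian lift is $w^\ast\sB$ pushed out along the natural map to $\tJ$ (exactness of $w^\ast$ being the key point), for \ref{ins:2} the cartesian lift is obtained by applying $w_\ast$ — using affineness so that $R^1w_\ast\tJ=0$ and $w_\ast\tJ=\widetilde{J_{[\varphi]}}$ — and then pulling back along the unit map to the structure sheaf, and \ref{ins:3} is a rerun of Proposition~\ref{prop:def-cof}~\ref{exal:3} internal to $\et(C)$. You in fact supply more of the verifications than the paper, which limits itself to describing the pushforward and pullback functors and omits the proof of \ref{ins:3}.
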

\begin{proof}
For the first two assertions we limit ourselves to describing the pushforward and pullback functors.  Their universal properties are not difficult to verify.

\ref{ins:1}
Suppose that $(B, I) \rightarrow (C, J)$ is a morphism in $X\hAlgMod$ whose corresponding morphism of \'etale sites is $w : \et(C) \rightarrow \et(B)$ and $(B,I,\sB) \in \sN_{X/Y}(B,I)$.  Let $u : \et(B) \rightarrow \et(X)$ and $v : \et(C) \rightarrow \et(X)$ be the morphisms of \'etale sites.  Then $w^\ast$ is exact so $w^\ast \sB$ is an extension of $w^\ast u^\ast \cO_X \simeq v^\ast \cO_X$ by $w^\ast \tI$.  Pushing out this extension via the map $w^\ast \tI \rightarrow \tJ$, we get the desired extension of $v^\ast \cO_X$ by $\tJ$.

\ref{ins:2}
Suppose $\sC \in \sN_{X/Y}(C)$ for some $C \in X\hAlg$, and $u : \et(C) \rightarrow \et(X)$ is the corresponding map of \'etale sites.  Let $\varphi : B \rightarrow C$ be an $X$-algebra homomorphism and let $w : \et(C) \rightarrow \et(B)$ be the corresponding map of \'etale sites.  Write $v$ for the map of \'etale sites $\et(B) \rightarrow \et(X)$.  Then $\sC$ is an extension of $u^\ast \cO_X$ by $\tJ$ for some $C$-module $J$.  Using the fact that $w$ is affine, so $R^1 w_\ast \tJ = 0$, it follows that $w_\ast \sC$ is an extension of $w_\ast u^\ast \cO_X$ by $w_\ast \tJ = \tJ_{[\varphi]}$.  Pulling this extension back via the map $v^\ast \cO_X \rightarrow w_\ast u^\ast \cO_X$ gives the extension of $v^\ast \cO_X$ by $\tJ_{[\varphi]}$ that we seek.

\ref{ins:3}
The proof is very similar to that of Proposition~\ref{prop:def-cof}~\ref{exal:3} and we omit it.  A proof of the left exactness may be found in \cite[Section~7.1.11]{ccct}.
%
%
%
\end{proof}

Now we construct a cartesian diagram~\eqref{eqn:6} (with $\sE = \sN_{X/Y}$), demonstrating that $\sN_{X/Y}$ is an obstruction theory for $X$ over $Y$.  First we construct the map $h^\ast \Exal_Y \rightarrow \sN_{X/Y}$.  An object of $h^\ast \Exal_Y$ is representable by an $X$-algebra $C$ and a square-zero extension $C'$ of $C$ as a $Y$-algebra.  Let $I$ denote the ideal of $C$ in $C'$ and  let $f$ denote the map of \'etale topoi $\et( C) \rightarrow X$ coming from the $X$-algebra structure on $C$.  Since the \'etale site of $C'$ is the same as that of $C$, we obtain a square-zero extension $\tC'$ of $\tC$ by $\tI$ on $\et( C)$ as a $f^\ast h^\ast \cO_Y$-algebra.  Pulling back via the map $f^\ast \cO_X \rightarrow \tC$, we get a square-zero extension $\sC = \tC' \fp_{\tC} f^\ast \cO_X$ of $f^\ast \cO_X$ on $\et(C)$ as a $f^\ast h^\ast \cO_Y$-algebra:
\begin{equation}  \label{eqn:24} \xymatrix{
0 \ar[r] & \tI \ar[r] \ar[d] & \sC \ar[r] \ar[d] & f^\ast \cO_X \ar[r] \ar[d] & 0 \\
0 \ar[r] & \tI \ar[r] & \tC' \ar[r] & \tC \ar[r] & 0.
} \end{equation}
This gives an object of $\sN_{X/Y}(C, I)$.

To verify that Diagram~\eqref{eqn:6} is cartesian, we check that isomorphisms between the object constructed above and the trivial extension of $f^\ast \cO_X$ by $\tI$ are in bijection with the $X$-algebra structures on $\tC'$ lifting the given one on $\tC$.  But to give an $X$-algebra structure on $\tC'$ is the same as to lift the diagram
\begin{equation*} \xymatrix{
& & & f^\ast \cO_X \ar@{-->}[dl] \ar[d] \\
0 \ar[r] & \tI \ar[r] & \tC' \ar[r] & \tC \ar[r] & 0
} \end{equation*}
which is of course the same as to split the extension in the first row of Diagram~\eqref{eqn:24}.

\begin{proposition} \label{prop:ins}
Let $\sE$ be an obstruction theory for $X$ over $Y$.  Then there is a uniquely determined (up to unique isomorphism) fully faithful functor $\sN_{X/Y} \rightarrow \sE$ inducing the cartesian diagram~\eqref{eqn:6} from the corresponding diagram for $\sN_{X/Y}$ (up to a unique isomorphism).
\end{proposition}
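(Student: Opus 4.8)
The plan is to show that $\sN_{X/Y}$ is the \emph{initial} obstruction theory for $X$ over $Y$: any other obstruction theory $\sE$ receives a canonical comparison functor from it, and this functor is automatically fully faithful. Throughout, recall that $X$ and $Y$ are Deligne--Mumford.

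\textbf{Construction over étale $X$-algebras.} Over the full subcategory $X\hEtAlg\subset X\hAlg$ the comparison is immediate: for $C$ an étale $X$-algebra the relative cotangent complex $\bL_{C/X}$ vanishes, so $\Exal_X(C,-)$ is the trivial cofibered category, consistent with the Lemma preceding the proposition identifying $\sN_{X/Y}|_{X\hEtAlg}\simeq h^{\ast}\Exal_Y|_{X\hEtAlg}$. Composing this equivalence with the bottom arrow of~\eqref{eqn:6} for $\sE$ produces a functor $F_0$ over $X\hEtAlg$ that, by inspection, intertwines the bottom arrows out of $h^{\ast}\Exal_Y$ and sends zero objects to zero objects.

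\textbf{Extension to all $X$-algebras.} To pass from $F_0$ to a functor $F\colon\sN_{X/Y}\to\sE$ over $X\hAlg$, I would use the sheaf-theoretic description in Definition~\ref{def:ins}: for any $X$-algebra $C$ with module $J$, the value $\sN_{X/Y}(C,J)$ is the category of global sections over $\et(C)$ of the stack of square-zero $f^{\ast}h^{\ast}\cO_Y$-extensions of $f^{\ast}\cO_X$ by $\tJ$, while — because $\sE$ is a cartesian stack over $X\hAlg$ — $\sE(C,J)$ is the category of global sections over $\et(C)$ of the stack $U\mapsto\sE(\cO(U),J\otimes_C\cO(U))$. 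Choosing an étale cover of $\Spec C$ that refines a factorization of $\Spec C\to X$ through étale charts (possible since $X$ is Deligne--Mumford), the two stacks become comparable via $F_0$ over each piece of the cover, and the stack condition lets one descend to a functor $F$ on $\et(C)$, hence on global sections; the cocycle condition holds because over overlaps everything is governed by $h^{\ast}\Exal_Y$, where $F_0$ is canonical. Compatibility of $F$ with push-forward and pull-back is built into this construction, using that the horizontal arrows of~\eqref{eqn:6} preserve cocartesian arrows over étale maps and cartesian arrows over $\ComRng$ (Definition~\ref{def:obs}); and $F$ intertwines the full diagrams~\eqref{eqn:6} for $\sN_{X/Y}$ and $\sE$ because it does so over étale $X$-algebras and both diagrams are compatible with co-base change out of $X\hEtAlg$ (the computation around Diagram~\eqref{eqn:24} shows what the bottom arrow for $\sN_{X/Y}$ does on objects coming from $h^{\ast}\Exal_Y$). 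Since $\Exal_X$ is in each case the fiber of the bottom arrow over $e$, it follows automatically that $F$ carries the cartesian square for $\sN_{X/Y}$ to the one for $\sE$.

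\textbf{Full faithfulness and uniqueness.} A morphism in $\sN_{X/Y}$ decomposes over its underlying morphism in $X\hAlgMod$, and push/pull-compatibility reduces full faithfulness to the statement that $F\colon\sN_{X/Y}(C,J)\to\sE(C,J)$ is fully faithful for each $(C,J)$. These fibers are abelian $2$-groups and $F$ is additive, so by translation it is enough to check that $F$ induces an isomorphism $\mathrm{Aut}_{\sN_{X/Y}}(e)\to\mathrm{Aut}_{\sE}(e)$, i.e.\ an isomorphism on $\pi_{-1}$. But the long exact sequence of homotopy groups of the cartesian square~\eqref{eqn:6} identifies $\pi_{-1}\sN_{X/Y}(C,J)$ with the cokernel of $\pi_{-1}\Exal_X(C,J)\to\pi_{-1}h^{\ast}\Exal_Y(C,J)$, and the identical sequence for $\sE$ identifies $\pi_{-1}\sE(C,J)$ with the very same cokernel, compatibly with $F$; this gives the isomorphism. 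For uniqueness: any functor $\sN_{X/Y}\to\sE$ inducing~\eqref{eqn:6} must, over $X\hEtAlg$ where $\sN_{X/Y}\simeq h^{\ast}\Exal_Y$, be compatible with the bottom arrow for $\sE$, hence be canonically isomorphic to $F_0$; descent along the étale-local factorizations used above then forces it to be isomorphic to $F$ by a unique isomorphism.

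\textbf{Expected main obstacle.} The delicate point is the extension step: making precise, from Definition~\ref{def:ins} and the cartesian-stack hypothesis, that both $\sN_{X/Y}$ and $\sE$ are reconstructed from their restrictions to small étale sites in a way compatible enough to glue $F_0$ — in particular handling the fact that $f^{\ast}\cO_X$ is not $\tC$ but only describable through an étale cover of $\et(X)$ — together with the careful bookkeeping of the $2$-group structures and of compatibility with the connecting maps needed in the previous paragraph. By contrast, the homotopy-group comparison giving full faithfulness is routine once the framework is set up.
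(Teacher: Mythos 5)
Your overall strategy matches the paper's: restrict to \'etale $X$-algebras, where the Lemma gives $\sN_{X/Y}\simeq h^\ast\Exal_Y$ and the bottom arrow of~\eqref{eqn:6} for $\sE$ supplies the functor, then extend using the Deligne--Mumford hypothesis, with uniqueness and the compatibility of the two cartesian squares checked over \'etale charts. But the extension step is not powered by descent over $\et(C)$ as you describe it: the pieces $U$ of an \'etale cover of $\Spec C$ refining a factorization through a chart $\Spec D\to X$ are \'etale over $C$, not over $X$, so $F_0$ is not defined on them. What actually transports $F_0$ off the charts is Proposition~\ref{prop:cart}: since $\sN_{X/Y}$ and $\sE$ are \emph{cartesian}, the co-base change functors $\sN_{X/Y}(D,J_{[\varphi]})\to\sN_{X/Y}(C,J)$ and $\sE(D,J_{[\varphi]})\to\sE(C,J)$ along the (generally non-\'etale) $X$-algebra map $\varphi\colon D\to C$ are equivalences; one then checks independence of the chart via the triangles and tetrahedra of factorizations. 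You flag this step as the main obstacle but misidentify the mechanism.

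The genuine gap is in full faithfulness. The long exact sequence of the cartesian square does \emph{not} identify the automorphism group of $e$ in $\sN_{X/Y}(C,J)$ with the cokernel of the map on automorphism groups from $\Exal_X(C,J)$ to $h^\ast\Exal_Y(C,J)$: the sequence continues into the groups of isomorphism classes, and $\pi_0\Exal_X(C,J)\to\pi_0 h^\ast\Exal_Y(C,J)$ is not injective in general (two $X$-algebra extensions may be isomorphic as $Y$-algebra extensions without being isomorphic as $X$-algebra extensions). The isomorphism on automorphism groups can still be rescued by the five lemma applied to the ladder of five-term exact sequences, but more seriously, an additive functor of abelian $2$-groups that is an isomorphism on automorphism groups of the zero object need not be fully faithful: one also needs injectivity on isomorphism classes, which you do not address and which does not follow from that ladder (the sequences terminate before it could). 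The clean fix is to verify full faithfulness where everything else is verified, namely over \'etale $X$-algebras: there $\Exal_X(C,J)\simeq 0$, so the cartesian square exhibits $\sN_{X/Y}(C,J)=h^\ast\Exal_Y(C,J)$ as having trivial homotopy fiber over $e$ in $\sE(C,J)$, which yields at once the isomorphism on automorphism groups and the injectivity on isomorphism classes; the general case then follows by the same cartesian-transport used in the construction.
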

\begin{proof}
Since both $\sE$ and $\sN_{X/Y}$ are stacks on $X\hAlg$, it is sufficient to describe the value of the map $\sN_{X/Y} \rightarrow \sE$ on $(C,J)$ \'etale locally in $C$.  Since $X$ is a Deligne--Mumford stack, this means that we can assume there is an \'etale $X$-algebra $D$ and an $X$-algebra homomorphism $\varphi : D \rightarrow C$.  Let $g : \et(C) \rightarrow \et(D)$ be the induced morphism of \'etale sites.  The pushforward maps 
\begin{gather*}
\sN_{X/Y}(D, J_{[\varphi]}) \rightarrow \sN_{X/Y}(C, J) \\
\sE(D, J_{[\varphi]}) \rightarrow \sN_{X/Y}(C, J)
\end{gather*}
are both equivalences by Proposition~\ref{prop:cart}.  Moreover, if the map $D \rightarrow C$ factors through a map $\psi : D' \rightarrow C$, where $D'$ is another \'etale $X$-algebra, then the diagrams
\setlength{\hoffset}{-0.5in}
\begin{equation*} \xymatrix@C=0pt{
\sF_{X/Y}(D, J_{[\varphi]}) \ar[rr] \ar[dr] & & \sF_{X/Y}(D', J_{[\psi]}) \ar[dl] \\
& \sF_{X/Y}(C,J) 
}  \end{equation*}
(with $\sF = \sN$ or $\sF = \sE$) commute in a canonical way, and a similar statement holds for the tetrahedra coming from a sequence $D \rightarrow D' \rightarrow D'' \rightarrow C$.  Therefore, to define the map over all $X$-algebras $C$, it is sufficient to provide a definition when $f : \et(C) \rightarrow X$ is \'etale.

In this case, the map $f^\ast \cO_X \rightarrow \tC$ is an isomorphism, so $\sN_{X/Y}(C) = h^\ast \Exal_Y(C)$ and we have been given a map $h^\ast \Exal_Y(C) \rightarrow \sE(C)$ by hypothesis.

To check that the cartesian diagram~\eqref{eqn:6} can be recovered from the one for $\sN_{X/Y}$, it is sufficient to verify this over all \'etale $X$-algebras, in which case it is true by definition.
\end{proof}

The proposition demonstrates that $\sN_{X/Y}$ is the \emph{universal} obstruction theory for $X$ over $Y$.

We can extend this construction to the situation where $Y$ is an algebraic stack and $X$ is only assumed to be of Deligne--Mumford type over $Y$.  As before, it is enough to define $\sN_{X/Y}(C, J)$ \'etale locally in $C$, so we can assume that there is a smooth $Y$-algebra $B$ and a $Y$-algebra homomorphism $B \rightarrow C$.  We then take $\sN_{X/Y}(C,J) = \sN_{X_B/B}(C,J)$.  To demonstrate that this is well-defined, we must prove
\begin{lemma}[cf.\ {\cite[Lemma~5.3]{def2}}]
If $A \rightarrow B$ is a flat homomorphism of commutative rings, $X_A$ is a Deligne--Mumford stack over $A$, and $X_B = X \tensor_A B$, then for any $X_B$-algebra $C$ and any $C$-module $J$, the natural map $\sN_{X_B/B}(C,J) \rightarrow \sN_{X_A/A}(C,J)$ is an equivalence.
\end{lemma}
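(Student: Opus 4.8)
The plan is to reduce the statement to a computation with the sheaves $\cO_X$ and square-zero extensions on small étale sites, exploiting the fact that $\sN_{X_B/B}(C,J)$ depends only on $\et(C)$, the sheaf of rings $f^\ast \cO_{X_B}$ together with the map $f^\ast \cO_{X_B} \to \tC$, and the $f^\ast \cO_B$-algebra structure, where $f\colon \et(C)\to \et(X_B)$ is induced by the $X_B$-algebra structure on $C$. Since $X_B = X_A \tensor_A B$, there is a canonical map of étale topoi $\et(X_B)\to \et(X_A)$, and for a $C$-point $f\colon \et(C)\to\et(X_B)$ we get, by composition, a $C$-point $f_A\colon \et(C)\to\et(X_A)$ with $f_A^\ast\cO_{X_A} = f^\ast(\text{pullback of }\cO_{X_A}\text{ to }X_B)$. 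The content of the map $\sN_{X_B/B}(C,J)\to\sN_{X_A/A}(C,J)$ is then: a square-zero extension $\sA$ of $f^\ast\cO_{X_B}$ by $\tJ_{[\varphi]}$ as an $f^\ast\cO_B$-algebra restricts, along the ring map $f_A^\ast\cO_{X_A}\to f^\ast\cO_{X_B}$ and $f_A^\ast\cO_A\to f^\ast\cO_B$, to such an extension of $f_A^\ast\cO_{X_A}$ by $\tJ$ as an $f_A^\ast\cO_A$-algebra. I would first make this functor precise, noting that on the underlying $C$-module data it is the identity, so it suffices to show the extension data match up, i.e. that the forgetful map on the relevant torsor/category of extensions is an equivalence.

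Next I would translate both sides into $\Exal$ of sheaves of rings and invoke the transitivity and base-change properties of the cotangent complex of sheaves of rings — or, to stay within the excerpt's self-imposed framework, directly compare categories of square-zero extensions. The key point is flatness of $A\to B$: because $X_B = X_A\tensor_A B$, the natural map $f_A^\ast\cO_{X_A}\tensor_{f_A^\ast\cO_A} f^\ast\cO_B \to f^\ast\cO_{X_B}$ is an isomorphism of sheaves of rings on $\et(C)$. Therefore an $f^\ast\cO_B$-algebra square-zero extension of $f^\ast\cO_{X_B}$ by a module is the same data as an $f_A^\ast\cO_A$-algebra square-zero extension of $f_A^\ast\cO_{X_A}$ by the same module, \emph{provided} that the module, viewed over $f_A^\ast\cO_{X_A}$, remembers enough. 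Since $\tJ_{[\varphi]}$ is a $\tC$-module and the map $f_A^\ast\cO_{X_A}\to\tC$ factors through $f^\ast\cO_{X_B}\to\tC$, the module structures agree, and the equivalence of the two extension problems is exactly the assertion that, for a ring map $R\to R'$ obtained by a flat base change $S\to S'$ with $R' = R\tensor_S S'$, the category of $S'$-algebra square-zero extensions of $R'$ by an $R'$-module $M$ is equivalent to the category of $S$-algebra square-zero extensions of $R'$ by $M$ (both regarded as extensions of the $R$-algebra $R'$). This is the sheaf-theoretic analogue of \cite[Lemma~5.3]{def2} cited in the statement, and follows from the base-change isomorphism $\bL_{R'/S'}\simeq \bL_{R/S}\tensor^L_R R'$ together with $\Exal_{S}(R',M) = \Ext^1(\bL_{R'/S},M)$ — or can be checked by hand using that $R' = R\tensor_S S'$.

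The functoriality and well-definedness bookkeeping — checking that the equivalence is compatible with pushforward along maps $(C,J)\to(C',J')$ of $X_B$-algebras, with the étale-local gluing used to define $\sN$, and with change of the smooth presentation $B\to C$ — is routine given the universal properties already established in the preceding propositions, so I would state it and omit the diagram chase. The main obstacle is the base-change isomorphism $f_A^\ast\cO_{X_A}\tensor_{f_A^\ast\cO_A} f^\ast\cO_B \xrightarrow{\ \sim\ } f^\ast\cO_{X_B}$: one must check that pulling back the identity $X_B = X_A\tensor_A B$ along an arbitrary $C$-point of $\et(X_B)$ commutes with the relevant topos pullbacks, which uses that $\et(C)\to\et(X_B)$ and $\et(X_B)\to\et(X_A)$ are both étale-site morphisms and that flat base change is preserved under $\cO$-module pullback along such morphisms. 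Once that isomorphism is in hand, the equivalence of extension categories — and hence the lemma — follows formally.
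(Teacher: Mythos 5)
Your approach rests on the same pivot as the paper's: the flat base change identity $g^\ast \cO_{X_A} \tensor_A B \simeq f^\ast \cO_{X_B}$ on $\et(C)$, where $f : \et(C) \rightarrow \et(X_B)$ and $g : \et(C) \rightarrow \et(X_A)$ are the maps induced by the two algebra structures on $C$. The paper's proof is just the ``check by hand'' branch of your second paragraph, carried out explicitly: given an extension $\sC$ of $g^\ast \cO_{X_A}$ by $\tJ$, flatness of $A \rightarrow B$ makes $- \tensor_A B$ exact, so $\sC \tensor_A B$ is an extension of $g^\ast \cO_{X_A} \tensor_A B = f^\ast \cO_{X_B}$ by $\tJ \tensor_A B$, and pushing out along the multiplication map $\tJ \tensor_A B \rightarrow \tJ$ (coming from $B \rightarrow C$) produces the inverse functor. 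This keeps the argument inside the paper's cotangent-complex-free framework, which is the point of Section 3; your cotangent-complex formulation is fine as a sanity check but would be out of place where the lemma sits.

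One statement in your reduction is wrong as written and needs repair. You claim that for $R' = R \tensor_S S'$ with $S \rightarrow S'$ flat, the category of $S'$-algebra square-zero extensions of $R'$ by $M$ is equivalent to the category of $S$-algebra square-zero extensions of $R'$ by $M$. Those categories are governed by $\Ext^1(\bL_{R'/S'}, M)$ and $\Ext^1(\bL_{R'/S}, M)$ respectively, and they differ by the contribution of $\bL_{S'/S} \tensor_{S'} R'$ in the transitivity triangle; they are not equivalent for general flat $S \rightarrow S'$ (take $S = R = k$ and $S' = R' = k[x]/(x^2)$: the first group vanishes, the second does not). What the lemma actually requires --- and what you state correctly one sentence earlier --- is the comparison of $S'$-extensions of $R'$ with $S$-extensions of $R$ by the same module, i.e.\ $\Ext^1_{R'}(\bL_{R'/S'}, M) \simeq \Ext^1_R(\bL_{R/S}, M)$, which does follow from $\bL_{R'/S'} \simeq \bL_{R/S} \tensor_R^L R'$ together with adjunction, or directly from the paper's pushout construction. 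With that correction the argument goes through.
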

\begin{proof}
We describe an inverse.  Let $f : \et(C) \rightarrow \et(X_B)$ and $g : \et(C) \rightarrow \et(X_A)$ be the maps induced from the $X_A$- and $X_B$-algebra structures on $C$.  Suppose that $J$ is a $C$-module and $\sC$ is an extension of $g^\ast \cO_{X_A}$ by $\tJ$.  Then $\sC \tensor_A B$ is an extension of $g^\ast \cO_{X_A} \tensor_A B = f^\ast \cO_{X_B}$ by $\tJ \tensor_A B$.  There is a canonical map $\tJ \tensor_A B \rightarrow \tJ$ coming from the bilinear map $B \times J \rightarrow J$ induced from the $C$-action on $J$ and the map $B \rightarrow C$.  Pushing out the extension by this map gives the required extension of $f^\ast \cO_{X_B}$ by $\tJ$.
\end{proof}
The verification that this is an obstruction theory for $X$ over $Y$ is a local problem and reduces to the verification that $\sN_{X_B/B}$ is an obstruction theory for $X_B$ over $B$, which was shown above.

\begin{corollary} \label{cor:ins}
An obstruction theory for a morphism $X \rightarrow Y$ of Deligne--Mumford type may be specified by giving a cartesian, left-exact, additively cofibered category $\sE$ over $X\hAlgMod$ and a fully faithful morphism $\sN_{X/Y} \rightarrow \sE$ respecting morphisms that are cocartesian over $X\hAlgMod$ and morphisms that are cartesian over $X\hAlg$.
\end{corollary}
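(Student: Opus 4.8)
The plan is to read the corollary off from Proposition~\ref{prop:ins} together with the explicit construction, carried out just before it, of the cartesian diagram~\eqref{eqn:6} that exhibits $\sN_{X/Y}$ as an obstruction theory for $X$ over $Y$. Recall that this construction supplies canonical morphisms $\iota\colon h^\ast\Exal_Y\to\sN_{X/Y}$ and $\Exal_X\to e$ fitting into a cartesian square of the shape~\eqref{eqn:6} whose horizontal arrows preserve arrows cocartesian over \'etale maps in $\ComRngMod$ and arrows cartesian over $\ComRng$. Thus what must be established is a correspondence between (a)~a cartesian diagram~\eqref{eqn:6} with the preservation properties of Definition~\ref{def:obs}, and (b)~a fully faithful $F\colon\sN_{X/Y}\to\sE$ respecting morphisms cocartesian over $X\hAlgMod$ and morphisms cartesian over $X\hAlg$. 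The correspondence should moreover be seen to be bijective (up to the appropriate isomorphisms).

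Given (a), Proposition~\ref{prop:ins} already produces such an $F$, unique up to unique isomorphism, inducing~\eqref{eqn:6} from the square for $\sN_{X/Y}$; the remaining task is to check that $F$ respects the two classes of morphisms named in (b). The point is that, by the proof of Proposition~\ref{prop:ins}, $F$ is, \'etale-locally on the source $X$-algebra, identified with the given morphism $h^\ast\Exal_Y\to\sE$ of Definition~\ref{def:obs}, and extended to all $X$-algebras via the equivalences of Proposition~\ref{prop:cart}. Respect for morphisms cartesian over $X\hAlg$ then follows from the preservation of arrows cartesian over $\ComRng$, transported along those equivalences. For morphisms cocartesian over $X\hAlgMod$ one first observes that a morphism of $A$-modules is, regarded as a morphism of $\ComRngMod$ lying over the identity of $A$, an \'etale map in $\ComRngMod$; hence Definition~\ref{def:obs} already forces the fibrewise functors $\sN_{X/Y}(A)\to\sE(A)$ to be additive, and one is left with the cocartesian arrows over a base change of rings, which one computes \'etale-locally using that both $\sN_{X/Y}$ and $\sE$ are cofibered over all of $X\hAlgMod$ and that the extension equivalences intertwine these cofibered structures.

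Conversely, given (b), I would compose $F$ with $\iota$ and with $\Exal_X\to e$ to obtain a square of the shape~\eqref{eqn:6}. Since $F$ respects cocartesian morphisms and the zero sections of $\sN_{X/Y}$ and $\sE$ are cocartesian sections, $F$ carries $e$ to $e$; using that $F$ is fully faithful, the fibre product $h^\ast\Exal_Y\times_{\sE}e$ is then identified with $h^\ast\Exal_Y\times_{\sN_{X/Y}}e$, which is $\Exal_X$ because the square for $\sN_{X/Y}$ is cartesian, so the new square is cartesian. Its horizontal arrows preserve arrows cocartesian over \'etale maps in $\ComRngMod$ and arrows cartesian over $\ComRng$ because this holds for the square for $\sN_{X/Y}$ and $F$ respects the relevant morphisms. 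Finally, feeding this diagram back into Proposition~\ref{prop:ins} recovers $F$ (by the last step of that proof it suffices to check this over \'etale $X$-algebras, where it holds by construction), so the two passages are mutually inverse. The step I expect to demand the most care is not any single one of these but the bookkeeping running through all of them: keeping straight which morphisms each functor is required to preserve, and in particular checking that the condition of Definition~\ref{def:obs}, phrased in terms of \'etale maps in $\ComRngMod$, does genuinely encode both \'etale base change and fibrewise additivity, so that it upgrades to the condition of the corollary, phrased in terms of the full cofibered structure over $X\hAlgMod$.
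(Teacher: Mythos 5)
Your argument is correct and follows the route the paper intends: the corollary is left unproved there precisely because it is meant to be read off from Proposition~\ref{prop:ins} (for passing from a diagram~\eqref{eqn:6} to a fully faithful $\sN_{X/Y} \rightarrow \sE$) together with the explicit cartesian square constructed for $\sN_{X/Y}$ (for composing back), exactly as you do. Your additional checks — that the functor from Proposition~\ref{prop:ins} respects the stated classes of morphisms, and that full faithfulness makes the composed square cartesian — are the right ones and are carried out correctly.
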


\begin{remark}
Given a suitable definition of a $2$-category of obstruction, the corollary may be rephrased to say that the $2$-category of obstruction theories is equivalent to the $2$-category of fully faithful morphisms $\sN_{X/Y} \rightarrow \sE$.
\end{remark}

\subsection{Global obstructions} \label{sec:global}

Let $h : X \rightarrow Y$ be a morphism of homogeneous stacks over $\ComRng$ and suppose that $\sE$ is an obstruction theory for $X$ over $Y$.  Consider a lifting problem
\begin{equation} \label{eqn:4} \xymatrix{
  S \ar[r] \ar[d] & X \ar[d]^h \\
  S' \ar[r] \ar@{-->}[ur] & Y
} \end{equation}
in which $S$ and $S'$ are algebraic spaces and $S'$ is a square-zero extension of $S$.  This implies that the ideal $J$ of $S$ in $S'$ is a quasi-coherent $\cO_S$-module on the small \'etale site of $S$ (which we identify with the small \'etale site of $S'$).  

Let $\usE$ be the stack on the small \'etale site of $S$ whose category of sections over $U$ is $\sE(U, J_U)^\circ$ when $U$ is affine and \'etale over $S$.  This is a legitimate definition since $\sE$ was assumed to be a stack in the \'etale topology.  If it becomes necessary to specify the dependence of $\usE$ on $S$ and $J$, we will write $\usE(S, J)$.

Define $\uDef_X(S,\sI)$ to be the stack on the small \'etale site of $S$ whose sections are commutative diagrams
\begin{equation*} \xymatrix{
S \ar[r] \ar[d] & X \\
S' \ar[ur]
} \end{equation*}
where $S'$ is a square-zero extension of $S$ by the quasi-coherent sheaf of ideals $\sI$.  Defining $\uDef_Y$ likewise, so that $h^\ast \uDef_Y(S,\sI)$ is the collection of commutative diagrams
\begin{equation*} \xymatrix{
S \ar[r] \ar[d] & X \ar[d] \\
S' \ar[r] & Y
} \end{equation*}
where $S'$ is a square-zero extension of $S$ by $\sI$, we obtain a map $\uDef_X(S,\sI) \rightarrow h^\ast \uDef_Y(S,\sI)$.  Note that if $S = \Spec A$ is affine, we have an equivalence
\begin{gather*}
\uDef_X(S, \tI) = \Exal_X(A, I)^\circ \\
h^\ast \uDef_Y(S, \tI) = h^\ast \Exal_Y(A, I)^\circ
\end{gather*}
and these equivalences are functorial in $A$.  When $S$ and $\sI$ are fixed, we omit them from the notation and write $\uDef_X$ for $\uDef_X(S, \sI)$.

Since $\sE$ is an obstruction theory and $\uDef_X$, $\uDef_Y$, and $\usE$ are all stacks, we have a cartesian diagram
\begin{equation} \label{eqn:5} \xymatrix{
  \uDef_X \ar[d] \ar[r] & e \ar[d] \\
  h^\ast \uDef_Y \ar[r]^\omega & \usE .
} \end{equation}
The commutativity of the diagram above induces a map
\begin{equation*}
  \uDef_X \rightarrow \set[(\xi, \varphi)]{\xi \in \uDef_Y, \varphi : \omega(\xi) \simeq e}
\end{equation*}
from $\uDef_X$ to the category of pairs $(\xi, \varphi)$ where $\xi$ is a section of $\uDef_Y$ and $\varphi$ is an isomorphism between the image $\omega(\xi)$ in $\usE$ and the zero section $e$ of $\usE$.  Note now that $h^\ast \uDef_Y$ can be identified with the collection of diagrams of solid lines~\eqref{eqn:4} and that to solve the lifting problem indicated in~\eqref{eqn:4} is precisely to lift a section of $h^\ast \uDef_Y$ to one of $\uDef_X$.  The cartesian diagram~\eqref{eqn:5} then provides us with a bijection between lifts of $\xi \in h^\ast \uDef_Y$ and isomorphisms between $\omega(\xi)$ and $e$.  In other words, we may think of $\omega(\xi)$ as the obstruction to the existence of a lift of $\xi$:  it is isomorphic to $e$ if and only if a lift exists and such isomorphisms correspond exactly to the lifts.

\section{Representability} \label{sec:rep}

\subsection{Additively cofibered categories associated to complexes}

Let $\sC$ be an abelian category.  Suppose that $\bF_\bullet$ is a chain complex.  Following \cite[Section~3.1]{biext1}, we define an additively cofibered category $\Psi_{\bF_\bullet}$ of diagrams
\begin{equation*} \xymatrix{
& & & \bF_2 \ar[d] \ar@/_10pt/[ddl]_0 \\
& & & \bF_1 \ar[d] \ar[dl] \\
0 \ar[r] & J \ar[r] & X \ar[r] & \bF_0 \ar[r] & 0
} \end{equation*}
in which the sequence at the bottom is exact.  A morphism in $\Psi_{\bF_\bullet}$ from $(X, I, \alpha) \in \Psi_{\bF_\bullet}(I)$ to $(Y,J,\beta) \in \Psi_{\bF_\bullet}(J)$ is a commutative diagram
\begin{equation*} \xymatrix@R=10pt{
0 \ar[r] & I \ar[r] \ar[d] & X \ar[r] \ar[d] & \bF_0 \ar@{=}[d] \ar[r] & 0 \\
0 \ar[r] & J \ar[r] & Y \ar[r] & \bF_0 \ar[r] & 0
} \end{equation*}
such that the map $X \rightarrow Y$ carries $\alpha$ to $\beta$.

Objects of $\Psi_{\bF_\bullet}(J)$ are called \emph{extensions of $\bF_\bullet$ by $J$}.  We also write $\Ext(\bF_\bullet, J)$ for $\Psi_{\bF_\bullet}(J)$.

\begin{example}
\begin{enumerate}[label=(\roman{*})]
\item If $\bF_\bullet = \bF_1[1]$, we have $\Psi_{\bF_\bullet}(J) = \Hom(\bF_\bullet, J)$.
\item If $\bF = \bF_0[0]$ then $\Psi_{\bF_\bullet}(J)$ is the category of extensions of $\bF_0$ by $J$.
\end{enumerate}
\end{example}

\begin{definition} \label{def:rep-loc}
An additively cofibered category $\sE$ over an abelian category $\sC$ is said to be \emph{representable} if there exists a $2$-term complex $\bF_\bullet$ in $\sC$, concentrated in degrees $[-1,0]$ and an isomorphism $\sE \simeq \Psi_{\bF_\bullet}$.
\end{definition}

\begin{remark}
Note that if $\bE_\bullet$ is any chain complex, $\Psi_{\bE_\bullet} = \Psi_{\tau_{\geq -1} \bE_\bullet}$, so the definition is equivalent to the one that would be obtained by suppressing the restriction on the degrees in which $\bE_\bullet$ is concentrated.
\end{remark}

The definition of $\Psi_{\bF_\bullet}$ can be extended to cochain complexes.  Suppose that $\bE^\bullet$ is a cochain complex (in non-negative degrees).  Following \cite[Section~3.3.2]{biext1}, we define $\Psi_{\bF_\bullet}(\bE^\bullet)$ to be the category of diagrams
\begin{equation*} \xymatrix{
& & & \bF_2 \ar[d] \ar@/_10pt/[ddl]_0  \\
& & & \bF_1 \ar[d] \ar[dl] \\
0 \ar[r] & \bE^0 \ar[r] \ar[d] & X \ar[r] \ar[dl] \ar@/^10pt/[ddl]^0  & \bF_0 \ar[r] & 0 \\
& \bE^1  \ar[d] \\
& \bE^2 
} \end{equation*}
in which the middle row is exact and the maps
\begin{gather*}
\bF_1 \rightarrow X \rightarrow \bE^1 \\
\bF_2 \rightarrow X \\
X \rightarrow \bE^2  
\end{gather*}
are all zero.  We note that these are abelian examples of \emph{butterflies} \cite{Ald,AN1,AN2}.

The construction above is a special case of a more general construction.  Suppose that $\sE$ is an additively cofibered category over an abelian category $\sC$.  Let $\bF^\bullet$ be a cochain complex.  Define $\sE(\bF^\bullet)$ to be the category of pairs $(X, \alpha)$ where $X \in \sE(\bF^0)$ and $\alpha : d_\ast X \simeq 0_{\sE(\bF^1)}$ is an isomorphism inducing the zero isomorphism
\begin{equation*}
0_{\sE(\bF^2)} \simeq d_\ast d_\ast X \xrightarrow{d_\ast \alpha} d_\ast 0_{\sE(\bF^1)} \simeq 0_{\sE(\bF^2)} .
\end{equation*}
This determines an additively cofibered category over the category of cochain complexes.  Moreover, if $\sE$ is a left exact and $\bF^\bullet \rightarrow \bG^\bullet$ is a quasi-isomorphism then by \cite[Proposition~2.7]{ccct}, the induced map $\sE(\bF^\bullet) \rightarrow \sE(\bG^\bullet)$ is an equivalence.

Suppose that $X$ is a stack in groupoids over $\ComRng$ and $\bF_\bullet$ is a chain complex of sheaves of flat modules\footnote{In fact, the definition and many of the results are valid when only $\bF_0$ is required to be flat.} on $X\hAlg$.  That is, for each $X$-algebra $A$, we have a chain complex $\bF_\bullet(A)$ of sheaves of $\tA$-modules on $\et(A)$, concentrated in degrees $\leq 0$, with $\bF_i(A)$ flat over $\tA$, equipped with maps $u^\ast \bF_\bullet(A) \rightarrow \bF_\bullet(B)$ for each morphism $A \rightarrow B$ of $X$-algebras corresponding to $u : \et(B) \rightarrow \et(A)$.  These maps are required to satisfy the expected compatibility condition and to induce isomorphisms $u^\ast \bF_\bullet(A) \tensor_{u^\ast \tA} \tB \xrightarrow{\sim} \bF_\bullet(B)$.  Define $\Psi_{\bF_\bullet}$ to be the category of triples $(A, J, X)$ where $A$ is an $X$-algebra, $J$ is an $A$-module, and $X$ is an extension of $\bF_\bullet(A)$ by~$\tJ$.  If $\bE^\bullet$ is a chain complex of $A$-modules then we can define $\Psi_{\bF_\bullet}(A, \bE^\bullet) = \Psi_{\bF_\bullet(A)}(\tbE^\bullet)$.

\begin{lemma}
Suppose that $X$ is a stack in groupoids on $\ComRng$.  Let $\bF_\bullet$ be a chain complex of flat modules on $X$.  The category $\Psi_{\bF_\bullet}$ defined above is a left exact, additively cofibered, cartesian stack over $X$.
\end{lemma}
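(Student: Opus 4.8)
The plan is to verify each of the four assertions in turn---left exact, additively cofibered, cartesian, and a stack---largely by reducing to the corresponding statements about $\Psi_{\bF_\bullet(A)}$ for a fixed $X$-algebra $A$, which are treated in the cited sections of \cite{biext1}, and then checking that these assemble compatibly over $X$.

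First I would establish that $\Psi_{\bF_\bullet}$ is cofibered over $X\hAlgMod$. Given a morphism $(A,J)\to(B,K)$ in $X\hAlgMod$, corresponding to $u:\et(B)\to\et(X{\hbox{-}}\mathrm{Spec}\,A)$ together with $\tJ_{[u]}\to\tK$, one pushes an extension of $\bF_\bullet(A)$ by $\tJ$ first along $u^\ast$ (using that $u^\ast$ is exact and, crucially, that $u^\ast\bF_\bullet(A)\tensor_{u^\ast\tA}\tB\xrightarrow{\sim}\bF_\bullet(B)$ by hypothesis, so that after tensoring the pushed-out extension is an extension of $\bF_\bullet(B)$), and then pushes out the resulting extension along $\tJ\tensor_A B\to\tK$. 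Flatness of $\bF_0$ (the only flatness actually needed, as the footnote observes) is what guarantees that $u^\ast$ and the tensor product preserve the exactness of the bottom row $0\to\tJ\to X\to\bF_0\to0$. The universal property of this pushout is routine, using the universal property of cocartesian arrows in each of the two elementary steps.

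Next, additive cofiberedness and left exactness for each fiber $\Psi_{\bF_\bullet}(A)$ over $A\hMod$ follow from \cite[Section~3.1]{biext1}: one has $\Psi_{\bF_\bullet(A)}(0)=0$ because an extension of $\bF_\bullet(A)$ by the zero module splits uniquely; the product decomposition $\Psi_{\bF_\bullet(A)}(I\oplus K)\simeq\Psi_{\bF_\bullet(A)}(I)\times\Psi_{\bF_\bullet(A)}(K)$ comes from the corresponding decomposition of extensions; and left exactness, i.e. exactness of $0\to\Psi_{\bF_\bullet(A)}(I)\to\Psi_{\bF_\bullet(A)}(J)\to\Psi_{\bF_\bullet(A)}(K)$ for a short exact sequence of $A$-modules, is the Baer-sum-style computation in \emph{loc.\ cit.}, again using flatness of $\bF_0$. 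For the cartesian assertion, by Proposition~\ref{prop:cart} it suffices to check condition~\ref{cart:3}: for an $X$-algebra homomorphism $f:A\to B$ and a $B$-module $K$, the functor $\Psi_{\bF_\bullet}(A,K_{[f]})\to\Psi_{\bF_\bullet}(B,K)$ is an equivalence. But the compatibility isomorphism $u^\ast\bF_\bullet(A)\tensor_{u^\ast\tA}\tB\xrightarrow{\sim}\bF_\bullet(B)$ identifies an extension of $\bF_\bullet(B)$ by $\tK$ with an extension of $\bF_\bullet(A)$ pulled back along $\et(B)\to\et(A)$, i.e.\ with the $f$-twist $K_{[f]}$, so the inverse functor is given by this pullback-and-tensor; one checks it is quasi-inverse to co-base change on objects and arrows.

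Finally, for the stack property over $X\hAlg$ in the \'etale topology: descent of objects and morphisms of $\Psi_{\bF_\bullet}$ along an \'etale cover of an $X$-algebra $A$ reduces to \'etale descent of short exact sequences of $\tA$-modules together with the extra data of the maps from $\bF_1(A),\bF_2(A)$---and here the small \'etale sites match up, $\bF_\bullet$ is a sheaf on $X\hAlg$ by hypothesis, and quasi-coherent sheaves satisfy descent, so the descent datum glues uniquely. I expect the main obstacle to be bookkeeping the coherence of the two-step pushforward functor (first $u^\ast$, then pushout) across composites of morphisms in $X\hAlgMod$, so that $\Psi_{\bF_\bullet}$ is genuinely cofibered rather than merely ``cofibered up to higher coherence''; the key point making this manageable is again the prescribed isomorphism $u^\ast\bF_\bullet(A)\tensor\tB\simeq\bF_\bullet(B)$, which rigidifies the choices and makes the required cocycle identities strict.
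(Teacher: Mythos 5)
Your construction of the co-base change functor (pull back along $u$, tensor with $\tB$, use flatness of $\bF_0$ to preserve exactness of the bottom row, then push out along the module map), the appeal to \cite{biext1} for additivity and left exactness, and the descent argument for the stack property all coincide with what the paper does. The one place where your argument does not work as written is the verification that $\Psi_{\bF_\bullet}$ is cartesian. You reduce via Proposition~\ref{prop:cart} to showing that the co-base change functor $\Psi_{\bF_\bullet}(A,J_{[f]})\rightarrow\Psi_{\bF_\bullet}(B,J)$ is an equivalence, and then assert that its inverse is ``given by this pullback-and-tensor.'' But pullback-and-tensor along $u:\et(B)\rightarrow\et(A)$ carries sheaves on $\et(A)$ to sheaves on $\et(B)$: it \emph{is} the co-base change functor, not a candidate for its quasi-inverse. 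And saying that the isomorphism $u^\ast\bF_\bullet(A)\tensor_{u^\ast\tA}\tB\simeq\bF_\bullet(B)$ ``identifies'' an arbitrary extension of $\bF_\bullet(B)$ by $\tJ$ with one pulled back from $\et(A)$ is precisely the essential surjectivity you are trying to prove; for a general homomorphism $f:A\rightarrow B$ the sites $\et(A)$ and $\et(B)$ are genuinely different, so nothing here is automatic.

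The missing ingredient is the functor in the opposite direction, which is what the paper constructs: given an extension $Z$ of $\bF_\bullet(B)$ by $\tJ$ on $\et(B)$, apply $u_\ast$ --- since $u$ is affine and $\tJ$ is quasi-coherent, the higher direct images of $\tJ$ vanish, so $u_\ast Z$ is still an extension, namely of $u_\ast\bF_\bullet(B)$ by $u_\ast\tJ=\tJ_{[f]}$ --- and then pull the extension back along the unit $\bF_\bullet(A)\rightarrow u_\ast\bF_\bullet(B)$ to land in $\Psi_{\bF_\bullet}(A,J_{[f]})$. This exhibits a base-change functor adjoint to your co-base change functor, and since the fibers are groupoids the two adjoints are automatically mutually inverse (this is the discussion immediately preceding Proposition~\ref{prop:cart}). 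With that construction supplied, the cartesian claim follows and the rest of your proof stands.
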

\begin{proof}
We describe the pushforward functor associated to morphisms of $X\hAlgMod$.  Let $Z$ be an extension of $\bF_\bullet(A)$ by $\tI$ for some $X$-algebra $A$ and $A$-module $I$.  Let $(A,I) \rightarrow (B,J)$ be a homomorphism in $X\hAlgMod$ and let $u : \et(B) \rightarrow \et(A)$ be the corresponding morphism of \'etale sites.  Then $u^\ast Z \tensor_{u^\ast \tA} \tB$ is an extension of $u^\ast \bF_\bullet(A) \tensor_{\tA} \tB = \bF_\bullet(B)$ by $u^\ast \tI \tensor_{u^\ast \tA} \tB$ (by the flatness of $\bF(A)_0$).  Pushing out this extension by the map $u^\ast \tI \tensor_{u^\ast \tA} \tB \rightarrow \tJ$, we get the extension we are looking for.

We also check that $\Psi_{\bF_\bullet}$ is fibered over $X\hAlg$.  Let $\varphi : A \rightarrow B$ be a morphism of $X$-algebras and $u : \et(B) \rightarrow \et(A)$ the corresponding morphism of \'etale sites.  Suppose that $Z$ is an extension of $\bF_\bullet(B)$ by $\tJ$ for some $B$-module $J$.  Then since $\tJ$ is quasi-coherent, $u_\ast Z$ is an extension of $u_\ast \bF_\bullet(B)$ by $u_\ast \tJ = \tJ_{[\varphi]}$.  Pulling this extension back via the map $\bF_\bullet(A) \rightarrow u_\ast \bF_\bullet(B)$ gives the desired object of $\Psi_{\bF_\bullet}(A,J_{[\varphi]})$.

To see that $\Psi_{\bF_\bullet}$ forms a stack, it's enough to remark that $\bF_\bullet$ forms an \'etale sheaf of complexes of $\cO_X$-modules and extensions of $\cO_X$-modules satisfy \'etale descent.  The left exactness is well-known and we omit the proof.
\end{proof}

\begin{definition} \label{def:rep-glob}
If $\sE$ is an additively cofibered stack over $X$ then we call $\sE$ \emph{representable} if it is isomorphic to a stack of the form $\Psi_{\bF_\bullet}$ for some chain complex of flat modules on $X$.
\end{definition}

Note that if $\sE$ is represented by the complex $\bF_\bullet$ in the sense of Definition~\ref{def:rep-glob} then for each $X$-algebra $A$, the additively cofibered category $\sE(A)$ on $A\hMod$ is represented by $\bF_\bullet(A)$, in the sense of Definition~\ref{def:rep-loc}.

\subsubsection*{Morphisms of complexes and additively cofibered categories}

Let $\bE_\bullet \rightarrow \bF_\bullet$ be a morphism of $2$-term complexes.  This induces a map $\Psi_{\bF_\bullet} \rightarrow \Psi_{\bE_\bullet}$ by pullback.

The following lemma says that the functor which takes a complex to its associated additively cofibered category is $2$-categorially fully faithful.  It is essentially an example of the $2$-categorical Yoneda lemma.

\begin{lemma} \label{lem:morphs}
Let $\bE_\bullet$ and $\bF_\bullet$ be $2$-term chain complexes in degrees $[-1,0]$ in an abelian category $\sC$.  Let $\sE = \Psi_{\bE_\bullet}$ and $\sF = \Psi_{\bF_\bullet}$ be the associated additively cofibered categories over $\sC$.  
\begin{enumerate}[label=(\roman{*}), ref=Lemma~\ref{lem:morphs}~(\roman{*})]
\item \label{lem:adj} There is an equivalence of categories
\begin{equation*}
\Hom(\sF, \sE) \simeq \Psi_{\bE_\bullet}(\bF_\bullet[-1]).
\end{equation*}
\item There is a bijection
\begin{equation*}
\Hom_{D(\sC)}(\bE_\bullet, \bF_\bullet) \rightarrow \Hom(\sF, \sE) / \text{isom.}
\end{equation*}
\item \label{lem:psi-equiv} If $\bE_\bullet \rightarrow \bF_\bullet$ is a quasi-isomorphism then the induced map $\Psi_{\bF_\bullet} \rightarrow \Psi_{\bE_\bullet}$ is an equivalence.
\end{enumerate}
\end{lemma}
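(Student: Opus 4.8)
The essential content is part~(i); parts~(ii) and~(iii) will follow from it together with the classical description of extension groupoids as (truncated) hyper-$\Ext$ groupoids. I would begin by unwinding the right-hand side in part~(i). Applying the construction of $\sE(\bF^\bullet)$ for a cochain complex to $\sE = \Psi_{\bE_\bullet}$ and the complex $\bF_\bullet[-1]$, which sits in cohomological degrees $0$ and $1$ with $(\bF_\bullet[-1])^0 = \bF_1$, $(\bF_\bullet[-1])^1 = \bF_0$ and differential the differential $d$ of $\bF_\bullet$, we see that an object of $\Psi_{\bE_\bullet}(\bF_\bullet[-1])$ is a pair $(\Xi, \alpha)$ with $\Xi \in \Psi_{\bE_\bullet}(\bF_1)$ and $\alpha : d_\ast \Xi \simeq 0$ an isomorphism in $\Psi_{\bE_\bullet}(\bF_0)$; the higher compatibility is automatic, as it takes place in $\Psi_{\bE_\bullet}(0)$, which is the trivial groupoid.

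For part~(iii) I would argue directly, without using part~(i). Given a quasi-isomorphism $q : \bE_\bullet \rightarrow \bF_\bullet$ and an object $J$ of $\sC$, the pullback $q^\ast : \Psi_{\bF_\bullet}(J) \rightarrow \Psi_{\bE_\bullet}(J)$ is a morphism of abelian $2$-groups, and on $\pi_0$ and $\pi_1$ it induces, respectively, the maps $\Hom_{D(\sC)}(\bF_\bullet, J[1]) \rightarrow \Hom_{D(\sC)}(\bE_\bullet, J[1])$ and $\Hom_{D(\sC)}(\bF_\bullet, J) \rightarrow \Hom_{D(\sC)}(\bE_\bullet, J)$ given by precomposition with $q$; here one uses that $\Psi_{\bG_\bullet}(J)$ models the Yoneda (hyper-$\Ext$) groupoid of $\bG_\bullet$ and $J$ in cohomological degrees $0$ and $1$ (see~\cite[Section~3.1]{biext1}). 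Since $q$ is an isomorphism in $D(\sC)$, both maps are bijections, so $q^\ast$ is fiberwise an equivalence of groupoids; as it also respects the cofibered and stack structures, it is an equivalence of additively cofibered stacks over $\sC$.

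Now part~(i), the heart of the matter. I would construct a functor $\Phi : \Psi_{\bE_\bullet}(\bF_\bullet[-1]) \rightarrow \Hom(\sF, \sE)$ and a quasi-inverse $\Psi$. Given $(\Xi, \alpha)$ as above and $Z \in \sF(J)$, that is, an exact sequence $0 \rightarrow J \rightarrow Z \xrightarrow{p} \bF_0 \rightarrow 0$ with a lift $s : \bF_1 \rightarrow Z$ of $d$, push $\Xi$ forward along $s$ to get $s_\ast \Xi \in \sE(Z)$; its pushforward along $p$ is $p_\ast s_\ast \Xi = (ps)_\ast \Xi = d_\ast \Xi$, which $\alpha$ trivializes. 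By left exactness of $\sE$ applied to $0 \rightarrow J \rightarrow Z \rightarrow \bF_0 \rightarrow 0$, the functor $\sE(J) \rightarrow \{\, (E, \beta) : E \in \sE(Z),\ \beta : p_\ast E \simeq 0 \,\}$ is an equivalence, so $(s_\ast \Xi, \alpha)$ determines an object $F_{(\Xi, \alpha)}(Z) \in \sE(J)$, and one checks that $Z \mapsto F_{(\Xi,\alpha)}(Z)$ is functorial and additive in $Z$ and compatible with the cofibered structure, hence a morphism $\sF \rightarrow \sE$; this check is routine. For $\Psi$, let $Z^{\mathrm{univ}} \in \sF(\bF_1)$ be the split extension $\bF_1 \oplus \bF_0$ of $\bF_0$ by $\bF_1$ equipped with the lift $x \mapsto (x, dx)$, and note that $d_\ast Z^{\mathrm{univ}}$ is canonically isomorphic to $0_{\sF(\bF_0)}$ (by $(a,b) \mapsto (a - b, b)$, after interchanging the summands). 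For a morphism $F : \sF \rightarrow \sE$ set $\Psi(F) = (F(Z^{\mathrm{univ}}), \alpha_F)$, where $\alpha_F$ is $F$ applied to this canonical trivialization, composed with the structural isomorphisms $d_\ast F(Z^{\mathrm{univ}}) \simeq F(d_\ast Z^{\mathrm{univ}})$ and $F(0_{\sF(\bF_0)}) \simeq 0_{\sE(\bF_0)}$. That $\Phi$ and $\Psi$ are mutually quasi-inverse is exactly the assertion that $Z^{\mathrm{univ}}$, together with its canonical trivialization, is a universal object: every $Z \in \sF(J)$ is recovered, up to canonical isomorphism, as the object associated by the left-exactness equivalence to $(s_\ast Z^{\mathrm{univ}}, \alpha)$. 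I expect this bookkeeping---establishing the universal property of $Z^{\mathrm{univ}}$ and checking that $\Phi$ and $\Psi$ invert each other---to be the one genuinely laborious step; it is an instance of the $2$-categorical Yoneda argument, and everything goes through verbatim with $\Psi_{\bE_\bullet}$ replaced by any left exact additively cofibered $\sE$ over $\sC$.

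Finally, part~(ii) follows by passing to isomorphism classes in part~(i): $\Hom(\sF, \sE)/\text{isom} \simeq \pi_0 \Psi_{\bE_\bullet}(\bF_\bullet[-1]) = \Hom_{D(\sC)}(\bE_\bullet, (\bF_\bullet[-1])[1]) = \Hom_{D(\sC)}(\bE_\bullet, \bF_\bullet)$, the middle identification again being the hyper-$\Ext$ description of the extension groupoid~\cite[Section~3.3.2]{biext1}. It remains to identify the resulting bijection with the map $g \mapsto [g^\ast]$ induced by pullback; I would check this by unwinding both sides on a chain map $g : \bE_\bullet \rightarrow \bF_\bullet$, using that $g^\ast Z^{\mathrm{univ}}_{\bF_\bullet}$ is the extension of $\bE_\bullet$ by $\bF_1$ representing the image of $g$ under $\Hom_{D(\sC)}(\bE_\bullet, \bF_\bullet) \rightarrow \Hom_{D(\sC)}(\bE_\bullet, \bF_1[1]) = \pi_0 \Psi_{\bE_\bullet}(\bF_1)$. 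That $g \mapsto [g^\ast]$ descends to $D(\sC)$, and is not merely defined on chain maps, uses part~(iii) to invert the pullback along the quasi-isomorphism in a roof representing a morphism of $D(\sC)$.
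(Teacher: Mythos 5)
Your proposal is correct in outline, but note that there is no internal argument in the paper to compare against: the paper's entire proof of this lemma is the citation \cite[Corollaire~6.13]{ccct}. What you have written is essentially a write-out of the content of that citation, organized exactly as the surrounding text suggests --- the objects of $\Psi_{\bE_\bullet}(\bF_\bullet[-1])$ are the abelian butterflies the paper draws immediately after the lemma, and the paper itself describes the statement as ``essentially an example of the $2$-categorical Yoneda lemma.'' Two caveats on where the weight of your argument actually sits. First, parts (ii) and (iii) as you prove them are not consequences of part (i) alone: both rest on the identification of $\pi_0$ and $\pi_1$ of the extension groupoids $\Psi_{\bG_\bullet}(-)$ with the corresponding groups $\Hom_{D(\sC)}(\bG_\bullet, -[1])$ and $\Hom_{D(\sC)}(\bG_\bullet, -)$, which you import from \cite{biext1}. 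This is a legitimate reference in context (the paper defines $\Psi$ following \cite{biext1}), but for part (iii) that identification is doing all the work, and for part (ii) it is of essentially the same depth as the statement being proved, so you are in effect trading one citation for another rather than eliminating it. Second, the step you defer --- that $Z^{\mathrm{univ}}$ with its canonical trivialization recovers every $Z \in \sF(J)$ through the left-exactness equivalence for $0 \rightarrow J \rightarrow Z \rightarrow \bF_0 \rightarrow 0$, and that $\Phi$ and $\Psi$ are mutually quasi-inverse and land in \emph{cocartesian} functors --- is precisely where the content of Grothendieck's Corollaire~6.13 lies; it is mechanical but should be carried out once, not least because it is where the sign on the shifted differential $\bF_1 \rightarrow \bF_0$ (which the paper flags right after the lemma) enters the comparison of your trivialization $(a,b) \mapsto (a-b,b)$ with the identity butterfly. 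With those two points addressed, the argument is sound and self-contained enough to replace the bare citation.
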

\begin{proof}
See \cite[Corollaire~6.13]{ccct}.
\end{proof}

Let $u, v : \bE_\bullet \rightarrow \bF_\bullet$ be two homomorphisms of complexes in an abelian category $\sC$ and let $h$ be a chain homotopy from $u$ to $v$.  Then $h$ induces an isomorphism between the two induced maps $\Psi_{\bF_\bullet} \rightarrow \Psi_{\bE_\bullet}$.  As an object of $\Psi_{\bE_\bullet}(\bF_\bullet[-1])$, the map $u$ can be described by a diagram
\begin{equation*} \xymatrix{
& & & \bE_1 \ar[d]^d \ar[dl]_{\bigl( \begin{smallmatrix} d \\ u_1 \end{smallmatrix} \bigr)} \\
0 \ar[r] & \bF_1 \ar[d]_{-d} \ar[r]^{\bigl( \begin{smallmatrix} 0 \\ -\id \end{smallmatrix} \bigr)} & \bE_0 \times \bF_1 \ar[dl]^{( \begin{smallmatrix} u_0 & d \end{smallmatrix} )} \ar[r]_{( \begin{smallmatrix} \id & 0 \end{smallmatrix} )} & \bE_0 \ar[r] & 0 . \\
& \bF_0 
} \end{equation*}
Note the sign on the differential $\bF_1 \rightarrow \bF_0$ because of the shift.  There is of course a similar diagram with $u$ replaced by $v$ describing the object of $\Psi_{\bE_\bullet}(\bF_\bullet[-1])$ induced from $v$.  Let $\mu$ and $\nu$ be the corresponding objects of $\Psi_{\bE_\bullet}(\bF_\bullet[-1])$.  The chain homotopy $h$ gives us a map
\begin{equation*}
\bigl( \begin{smallmatrix} \id & 0 \\ h & \id \end{smallmatrix} \bigr) : \bE_0 \times \bF_1 \rightarrow \bE_0 \times \bF_1 .
\end{equation*}
One checks that this gives a morphism of commutative diagrams, and therefore gives an isomorphism between $\mu$ and $\nu$.  It follows immediately from the definition that the composition of chain homotopies (by addition) is carried by this construction to the composition of morphisms in $\Psi_{\bE_\bullet}(\bF_\bullet[-1])$.

\subsection{The cotangent complex} \label{sec:cc}

Assume that $X \rightarrow Y$ is of Deligne--Mumford type and $Y$ is algebraic.  There is therefore a relative cotangent complex $\bL_{X/Y}$ and we can identify $\sN_{X/Y}(C, J) = \Ext(f^\ast \bL_{X/Y}, J)$,\marg{reference} where $f : \et(C) \rightarrow \et(X)$ is the morphism of \'etale sites induced from the $X$-algebra structure on $C$.  We likewise have $\Exal_X(C, f, J) = \Ext(\bL_{C/X}, J)$ and $h^\ast \Exal_Y(C, f, J) = \Ext(\bL_{C/Y}, J)$.  Moreover, the exact sequence
\begin{equation*}
0 \rightarrow \Exal_X(C, J) \rightarrow h^\ast \Exal_Y(C, J) \rightarrow \sN_{X/Y}(C, J)
\end{equation*}
coincides via these identifications with the exact sequence
\begin{equation*}
0 \rightarrow \Ext(\bL_{C/X}, J) \rightarrow \Ext(\bL_{C/Y}, J) \rightarrow \Ext(f^\ast \bL_{X/Y}, J)
\end{equation*}
associated to the exact triangle 
\begin{equation*}
f^\ast \bL_{X/Y} \rightarrow \bL_{C/Y} \rightarrow \bL_{C/X} \rightarrow f^\ast \bL_{X/Y}[1] .
\end{equation*}

\begin{proposition}
Suppose $\bE_\bullet \rightarrow \bL_{X/Y}$ is a morphism of chain complexes that induces an isomorphism $H_0(\bE_\bullet) \rightarrow H_0(\bL_{X/Y})$ and a surjection $H_1(\bE_\bullet) \rightarrow H_1(\bL_{X/Y})$.  Then the induced map $\sN_{X/Y} \rightarrow \sE$, where $\sE = \Ext(\bE_\bullet, -)$, is an obstruction theory.
\end{proposition}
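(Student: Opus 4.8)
The plan is to verify the hypotheses of Corollary~\ref{cor:ins}: since $X \to Y$ is of Deligne--Mumford type, it is enough to check that $\sE = \Ext(\bE_\bullet,-) = \Psi_{\bE_\bullet}$ is a cartesian stack of left exact additively cofibered categories over $X\hAlgMod$, and that the map $\sN_{X/Y} = \Psi_{\bL_{X/Y}} \to \Psi_{\bE_\bullet} = \sE$ induced by pulling back extensions along $\bE_\bullet \to \bL_{X/Y}$ is fully faithful and respects morphisms that are cocartesian over $X\hAlgMod$ and cartesian over $X\hAlg$. The first assertion is exactly the lemma of Section~\ref{sec:rep} asserting that $\Psi_{\bF_\bullet}$ is a left exact, additively cofibered, cartesian stack over $X$, applied to $\bF_\bullet = \bE_\bullet$ (which, like $\bL_{X/Y}$, may be taken to consist of flat modules). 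The compatibility with cocartesian and cartesian arrows is formal, since the pushforward and pullback functors defining $\Psi_{\bF_\bullet}$ are manifestly natural in $\bF_\bullet$ and hence commute with pullback along $\bE_\bullet \to \bL_{X/Y}$. Thus the real content of the proposition is the full faithfulness of $\sN_{X/Y} \to \sE$.

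Because $\sN_{X/Y}$ is cocartesian over $X\hAlgMod$ and the map to $\sE$ preserves cocartesian arrows, full faithfulness reduces to the statement for the fibers. So I would fix an $X$-algebra $C$, with $f : \et(C) \to \et(X)$ the induced morphism of étale topoi, and a $C$-module $J$, and show that the functor $\sN_{X/Y}(C,J) = \Ext(f^\ast \bL_{X/Y}, \tJ) \to \Ext(f^\ast \bE_\bullet, \tJ) = \sE(C,J)$ is fully faithful. Both sides are abelian $2$-groups, hence groupoids, so this says precisely that the functor is injective on isomorphism classes and bijective on automorphism groups. Under the identification of these sets with $\Ext^1(-,\tJ)$ and $\Ext^0(-,\tJ)$ respectively, and using Lemma~\ref{lem:morphs} together with the standard relationship (from \cite{biext1}) between an exact triangle of complexes and the long exact sequence relating the associated additively cofibered categories---applied to the triangle $f^\ast \bE_\bullet \to f^\ast \bL_{X/Y} \to f^\ast \bC$, where $\bC$ denotes the cone of $\bE_\bullet \to \bL_{X/Y}$---the claim comes down to the vanishing $\Ext^0(f^\ast \bC, \tJ) = \Ext^1(f^\ast \bC, \tJ) = 0$.

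To obtain that vanishing I would run the long exact cohomology sequence of the triangle $\bE_\bullet \to \bL_{X/Y} \to \bC$. The hypotheses that $H_0(\bE_\bullet) \to H_0(\bL_{X/Y})$ is an isomorphism and $H_1(\bE_\bullet) \to H_1(\bL_{X/Y})$ is surjective, combined with the fact that $\bL_{X/Y}$, being the relative cotangent complex of a morphism of Deligne--Mumford type, is concentrated in cohomological degrees $\leq 0$, force $H^k(\bC) = 0$ for all $k \geq -1$; since $f^\ast$ is exact, $f^\ast \bC$ is likewise concentrated in cohomological degrees $\leq -2$. Consequently $\Ext^i(f^\ast \bC, \tJ) = 0$ for $i = 0,1$, by the orthogonality of complexes concentrated in degrees $\leq -2$ against complexes concentrated in degrees $\geq -1$ (and $\tJ[i]$ sits in cohomological degree $-i \geq -1$). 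This completes the verification of the hypotheses of Corollary~\ref{cor:ins}. I expect the one genuinely delicate point to be the bookkeeping in the middle step: correctly identifying, via Lemma~\ref{lem:morphs} and the formalism of \cite{biext1}, the functor on additively cofibered categories induced by $\bE_\bullet \to \bL_{X/Y}$ with the bottom of the long exact Ext sequence of the triangle, so that the derived-category vanishing really does translate into full faithfulness rather than some weaker statement.
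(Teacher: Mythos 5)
Your proposal is correct and follows essentially the same route as the paper: reduce to full faithfulness, translate that into the statement that $\Ext^0(\bL_{X/Y},J)\rightarrow\Ext^0(\bE_\bullet,J)$ is an isomorphism and $\Ext^1(\bL_{X/Y},J)\rightarrow\Ext^1(\bE_\bullet,J)$ is injective, and deduce this from the long exact sequence of the cone, which the hypotheses force to be concentrated in cohomological degrees $\leq -2$. The only difference is that you spell out the structural verifications (that $\Psi_{\bE_\bullet}$ is a cartesian, left exact, additively cofibered stack and that the map respects the relevant arrows), which the paper takes as given.
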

\begin{proof}
We only need to check that it is fully faithful.  Since these are cofibered categories whose fibers are abelian $2$-categories, it's enough to check that the map induces an isomorphism on the automorphism group of the identity elements and is injective on isomorphism classes.  But the map on automorphism groups of the identity elements (resp.\ on isomorphism classes) can respectively be identified with the maps
\begin{gather*}
u_p : \Ext^p(\bL_{X/Y}, J) \rightarrow \Ext^p(\bE_\bullet, J)  .
\end{gather*}
for $p = 0$ (resp.\ for $p = 1$).  Let $\bF_\bullet$ be the cone of $\bE_\bullet \rightarrow \bL_{X/Y}$.  This is quasi-isomorphic to a chain complex concentrated in degrees $2$ and higher.  Therefore $\Hom(\bF_\bullet, J) = \Ext^1(\bF_\bullet, J) = 0$, so by the long exact sequence
\begin{multline*}
\Hom(\bF_\bullet, J) \rightarrow \Hom(\bL_{X/Y}, J) \rightarrow \Hom(\bE_\bullet, J) \rightarrow \\ \rightarrow \Ext^1(\bF_\bullet, J)  \rightarrow
\Ext^1(\bL_{X/Y}, J) \rightarrow \Ext^1(\bE_\bullet, J)
\end{multline*}
we deduce that $u_p$ is an isomorphism for $p = 0$ and injective for $p = 1$.
\end{proof}

\begin{corollary} \label{cor:BF1}
Any obstruction theory in the sense of Behrend--Fantechi gives rise to an obstruction theory in the sense of Definition~\ref{def:obs}.
\end{corollary}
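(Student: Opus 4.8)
The plan is to deduce Corollary~\ref{cor:BF1} immediately from the Proposition just established. Recall that a perfect obstruction theory for $h : X \rightarrow Y$ in the sense of Behrend--Fantechi \cite{BF} is a morphism $\phi : \bE_\bullet \rightarrow \bL_{X/Y}$ in the derived category of $X$, with $\bE_\bullet$ of perfect amplitude in $[-1,0]$ and $\operatorname{cone}(\phi)$ concentrated in degrees $\leq -2$; equivalently, $H_0(\phi)$ is an isomorphism and $H_1(\phi)$ is a surjection. (The absolute version \cite[Definition~4.4]{BF} is the case in which $Y$ is a point, with $\bL_X$ in place of $\bL_{X/Y}$; the argument below is insensitive to this.) First I would note that, because $\bE_\bullet$ has perfect amplitude in $[-1,0]$, it is \'etale-locally on $X$ represented by a two-term complex of vector bundles, hence of flat $\cO_X$-modules. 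The construction of $\Psi_{\bE_\bullet}$ for a complex of flat modules on $X$ (Section~\ref{sec:rep}) then produces a cartesian, left exact, additively cofibered stack $\sE := \Psi_{\bE_\bullet} = \Ext(\bE_\bullet, -)$ over $X$, and by \ref{lem:psi-equiv} this is independent of the chosen local representative, so the local constructions glue to a global $\sE$.

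Next I would translate the cone condition into the hypotheses of the Proposition. The exact triangle $\bE_\bullet \xrightarrow{\phi} \bL_{X/Y} \rightarrow \operatorname{cone}(\phi) \xrightarrow{+1}$ gives a long exact sequence of homology sheaves; since $\operatorname{cone}(\phi)$ is concentrated in degrees $\leq -2$, that is $H_0(\operatorname{cone}(\phi)) = H_1(\operatorname{cone}(\phi)) = 0$ in the homological indexing of this paper, the sequence forces $H_0(\bE_\bullet) \rightarrow H_0(\bL_{X/Y})$ to be an isomorphism and $H_1(\bE_\bullet) \rightarrow H_1(\bL_{X/Y})$ to be a surjection. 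These are exactly the hypotheses of the Proposition, so the induced map $\sN_{X/Y} \rightarrow \sE$ --- the one built from $\phi$ via the identification $\sN_{X/Y}(C, J) = \Ext(f^\ast \bL_{X/Y}, J)$ of Section~\ref{sec:cc} --- is an obstruction theory; combined with Corollary~\ref{cor:ins}, this exhibits $\sE$ as an obstruction theory for $X$ over $Y$ in the sense of Definition~\ref{def:obs}.

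There is no genuinely hard step here: all the content has already been absorbed into the Proposition. The only points that need care are bookkeeping ones --- matching the cohomological grading convention of \cite{BF} with the homological one used in the statement of the Proposition, checking that $\bE_\bullet \mapsto \Psi_{\bE_\bullet}$ is legitimate for a merely perfect (rather than honestly flat) complex via \'etale-local triviality together with stack descent, and verifying that the map $\sN_{X/Y} \rightarrow \sE$ produced by the Proposition is indeed the one induced by $\phi$. I would also record the absolute case explicitly, since that is the form in which Behrend--Fantechi obstruction theories most often appear; the same argument applies verbatim with $\bL_X$ in place of $\bL_{X/Y}$.
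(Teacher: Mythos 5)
Your proposal is correct and is exactly the route the paper intends: the corollary is stated as an immediate consequence of the preceding proposition, obtained by translating the Behrend--Fantechi cone condition (cone concentrated in degrees $\leq -2$) into the hypotheses that $H_0(\bE_\bullet) \rightarrow H_0(\bL_{X/Y})$ is an isomorphism and $H_1(\bE_\bullet) \rightarrow H_1(\bL_{X/Y})$ is a surjection via the long exact sequence of the triangle. The bookkeeping points you flag (grading conventions, local representability of a perfect complex by flat modules, and gluing) are exactly the right ones to check, and none of them causes trouble.
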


\begin{corollary}[of Lemma~\ref{lem:morphs}] \label{cor:BF2}
Suppose that $X \rightarrow Y$ is a morphism of Deligne--Mumford type and $\sN_{X/Y} \rightarrow \sE$ is an obstruction theory for $X$ over $Y$.  If the additively cofibered stack $\sE$ is representable by a flat chain complex $\bE_\bullet$ on $X$ with coherent cohomology then the obstruction theory $\sN_{X/Y} \rightarrow \sE$ is induced from an obstruction theory $\bE_\bullet \rightarrow \bL_{X/Y}$ in the sense of Behrend--Fantechi.
\end{corollary}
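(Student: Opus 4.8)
\emph{Overview.} The plan is to use Lemma~\ref{lem:morphs} to convert the given morphism of stacks $\sN_{X/Y}\to\sE$ into a morphism of complexes $\bE_\bullet\to\bL_{X/Y}$, and then to read off the Behrend--Fantechi cone condition from the fact that, being an obstruction theory (Corollary~\ref{cor:ins}), this morphism is fully faithful. Once both are in hand, the proposition preceding Corollary~\ref{cor:BF1} identifies the obstruction theory induced by $\phi$ with $\sN_{X/Y}\to\sE$.

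\emph{Producing the map.} By the identification in Section~\ref{sec:cc} we have $\sN_{X/Y}=\Psi_{\bL_{X/Y}}$, where $\bL_{X/Y}$ is represented by a complex of flat $\cO_X$-modules; by hypothesis $\sE\simeq\Psi_{\bE_\bullet}$. So $\sN_{X/Y}\to\sE$ is a morphism of representable additively cofibered stacks over $X$. I would apply Lemma~\ref{lem:morphs}~(ii) \'etale-locally on $X$, where (after passing to $\tau_{\geq -1}$, which changes neither associated stack) the relevant complexes become two-term, to obtain local morphisms in the derived category; since over overlaps they classify the same morphism of stacks, the bijectivity in Lemma~\ref{lem:morphs}~(ii) shows they agree there, so they glue, and the glued derived morphism lifts to a morphism $\phi\colon\bE_\bullet\to\bL_{X/Y}$ of the complexes themselves. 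The construction is arranged so that the induced map $\Psi_{\bL_{X/Y}}\to\Psi_{\bE_\bullet}$ is isomorphic to the given $\sN_{X/Y}\to\sE$.

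\emph{The cone condition.} By Corollary~\ref{cor:ins} the morphism $\sN_{X/Y}\to\sE$ is fully faithful. As in the proof of the proposition preceding Corollary~\ref{cor:BF1}, for an $X$-algebra $C$ and a $C$-module $J$ the maps induced on the automorphism group of the zero object, respectively on isomorphism classes, are $u_p\colon\Ext^p(\bL_{X/Y},J)\to\Ext^p(\bE_\bullet,J)$ for $p=0$, respectively $p=1$; so full faithfulness forces $u_0$ to be an isomorphism and $u_1$ injective, for every $C$ and every $J$. Writing $\bF_\bullet$ for the cone of $\phi$ (concentrated in cohomological degrees $\leq 0$) and feeding this into the long exact sequence
\begin{multline*}
\Hom(\bF_\bullet,J)\to\Ext^0(\bL_{X/Y},J)\xrightarrow{u_0}\Ext^0(\bE_\bullet,J)\to\\
\to\Ext^1(\bF_\bullet,J)\to\Ext^1(\bL_{X/Y},J)\xrightarrow{u_1}\Ext^1(\bE_\bullet,J),
\end{multline*}
together with $\Ext^{-1}(\bE_\bullet,J)=0$ (for degree reasons, $\bE_\bullet$ being in degrees $\leq 0$), one gets $\Hom(\bF_\bullet,J)=0$ and $\Ext^1(\bF_\bullet,J)=0$ for every $J$. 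Since $\bF_\bullet$ lies in degrees $\leq 0$, $\Hom(\bF_\bullet,J)=\Hom(H^0(\bF_\bullet),J)$; taking $J=H^0(\bF_\bullet)$ (legitimate, $H^0(\bF_\bullet)$ being coherent) yields $H^0(\bF_\bullet)=0$. Then $\bF_\bullet$ lies in degrees $\leq -1$, so $\Ext^1(\bF_\bullet,J)=\Hom(H^{-1}(\bF_\bullet),J)$, and the same substitution gives $H^{-1}(\bF_\bullet)=0$. Thus $\bF_\bullet$ is concentrated in degrees $\leq -2$, i.e.\ $\phi$ is an obstruction theory in the sense of Behrend--Fantechi; and by the second paragraph $\sN_{X/Y}\to\sE$ is the obstruction theory induced from it.

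\emph{Main obstacle.} The one delicate point is the construction of the global $\phi$: globalizing Lemma~\ref{lem:morphs} over an \'etale presentation of $X$, and reconciling the truncations $\tau_{\geq -1}$ needed to make the complexes two-term with the untruncated $\bE_\bullet$ and $\bL_{X/Y}$ in the conclusion. It is here that the coherence hypothesis on the cohomology of $\bE_\bullet$ (and the standard coherence of the cohomology of $\bL_{X/Y}$), together with the boundedness of all complexes above degree $0$, is used, to force the obstruction and indeterminacy $\Ext$-groups in the gluing --- which lie in cohomological ranges disjoint from the supports of the complexes involved --- to vanish. Everything after that is a formal consequence of full faithfulness and the long exact sequence of the cone.
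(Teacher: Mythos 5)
Your overall route is the same as the paper's: obtain $\bE_\bullet \rightarrow \bL_{X/Y}$ from Lemma~\ref{lem:morphs}, use full faithfulness of $\sN_{X/Y} \rightarrow \sE$ to see that $\Ext^p(\bL_{X/Y},J) \rightarrow \Ext^p(\bE_\bullet,J)$ is an isomorphism for $p=0$ and injective for $p=1$, and translate this into the Behrend--Fantechi condition on the cone. Your cone computation (showing $\Hom(\bF_\bullet,J)=\Ext^1(\bF_\bullet,J)=0$ for all $J$ and then testing against $J=H^0(\bF_\bullet)$ and $J=H^{-1}(\bF_\bullet)$) is a correct in-line substitute for the paper's citation of \cite[Lemma~A.1.1]{ACW}, and is arguably more self-contained.

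The one place where your argument as written would fail is the globalization step you flag as the ``main obstacle.'' Morphisms in the derived category do not glue from agreement on \'etale overlaps, and the vanishing you invoke to dismiss the descent obstruction is false: for two complexes both concentrated in degrees $[-1,0]$ the indeterminacy group is $\Ext^{-1}(\bE_\bullet,\bL_{X/Y})=\Hom_{D}(\bE_\bullet,\bL_{X/Y}[-1])$, which receives a contribution from $\Hom(H^0(\bE_\bullet),H^{-1}(\bL_{X/Y}))$ and is in general nonzero; the relevant cohomological ranges are \emph{not} disjoint. The correct mechanism is already in the lemma you cite, but in part (i) rather than part (ii): $\uHom(\sF,\sE)\simeq\Psi_{\bE_\bullet}(\bL_{X/Y}[-1])$ is an equivalence of \emph{categories}, and the right-hand side (butterflies) forms a stack. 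So the globally given morphism of additively cofibered stacks $\sN_{X/Y}\rightarrow\sE$ directly produces a global butterfly, whose isomorphism class is the desired global morphism $\bE_\bullet\rightarrow\bL_{X/Y}$ in the derived category --- no gluing of derived-category morphisms, and no lift to an honest chain map (which Behrend--Fantechi do not require), is needed. With that step repaired, the rest of your argument goes through.
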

\begin{proof}
Lemma~\ref{lem:morphs} guarantees that the map $\sN_{X/Y} \rightarrow \sE$ is induced from a map $\bE_\bullet \rightarrow \bL_{X/Y}$ where $\bE_\bullet$ is concentrated in degrees $\leq 0$.  Since we have assumed the cohomology of $\bE_\bullet$ is coherent, $\bE_\bullet$ satisfies Condition~($\star$) of \cite[Definition~2.3]{BF}.

We must check that the map $\bE_\bullet \rightarrow \bL_{X/Y}$ induces an isomorphism on $H^0$ and a surjection on $H^{-1}$.  This is a local condition, so we assume that $X = \Spec A$ is affine and replace the complex of sheaves $\bL_{X/Y}$ with a complex of $A$-modules that is quasi-isomorphic to $\bL_{X/Y}$ and do the same for $\bE_\bullet$.  The condition we wish to verify is equivalent to the condition that for every $A$-modules $J$, the map
\begin{equation} \label{eqn:27}
\Ext^p(\bL_{X/Y}, J) \rightarrow \Ext^p(\bE_\bullet, J)
\end{equation}
be an isomorphism for $p = 0$ and an injection for $p = 1$ (by, for example, \cite[Lemma~A.1.1]{ACW}).  But we may identify $\Ext^0(\bL_{X/Y}, J)$ (resp.\ $\Ext^0(\bE_\bullet, J)$) as the automorphism group of the identity in $\sN_{X/Y}(A, J)$ (resp.\ $\sE(A,J)$) and $\Ext^1(\bL_{X/Y}, J)$ (resp.\ $\Ext^1(\bE_\bullet, J)$) as the group of isomorphism classes in $\sN_{X/Y}(A,J)$ (resp.\ $\sE(A,J)$).  The full faithfulness of $\sN_{X/Y} \rightarrow \sE$ implies the required facts about the maps~\eqref{eqn:27}.
\end{proof}

\subsubsection*{Virtual fundamental classes} \label{sec:vfc}

Let $\sE$ be an additively cofibered, left exact, cartesian category over $X$.  For each $X$-algebra $A$, let $M_A$ be the $A$-module $A$ itself.  The collection of pairs $(A, M_A)$ constitutes a  cocartesian section of $X\hAlgMod$ over $X\hAlg$, and therefore by pullback via this section we obtain a stack $\fE$ on $X$.  This is an $A$-module stack (the analogue of an abelian group stack, or ``Picard stack,'' for $A$-modules).  In particular, it has an action of $\bA^1$, in the sense of \cite[Definition~1.5]{BF}.

If $X \rightarrow Y$ is a morphism of Deligne--Mumford type, write $\fN_{X/Y}$ for the stack obtained in this way from $\sN_{X/Y}$.

Suppose that $\sE$ is an obstruction theory for $X$ over $Y$.  Then we obtain a map $\fN_{X/Y} \rightarrow \fE$ from the map $\sN_{X/Y} \rightarrow \sE$.  This map is an embedding, by Corollary~\ref{cor:ins}.

Recall that there is a canonical embedding of the intrinsic normal cone $\fC$ in $\fN$ \cite[Definition~3.10]{BF}.  This induces an embedding $\fC \subset \fE$.  If $\fE$ is a vector bundle stack then we obtain a virtual fundamental class $[X/Y]^{\vir}_{\fE}$ by intersecting $\fC$ with the zero locus in $\fE$.

\subsection{Perfection} \label{sec:perf}

\begin{definition} \label{def:lfp}
  \begin{enumerate}
  \item We say that a left exact additively cofibered category $\sE$ over $X\hAlgMod$ is \emph{locally of finite presentation} if the natural map
  \begin{equation*}
    \varinjlim_j \sE(A_j, I_j) \rightarrow \sE(A, I)
  \end{equation*}
  is an equivalence whenever $\set{(A_j, I_j)}_j$ is a filtered system of objects of $X\hAlgMod$.
  \item A left exact additively cofibered category $\sE$ over $X\hAlgMod$ is called \emph{quasi-perfect} if the stack of $\cO_X$-modules $\fE$, whose value on an $X$-algebra $A$ is $\sE(A, A)$, is a vector bundle stack \cite[Definition~1.9]{BF} over $X$.

  \item A left exact additively cofibered category $\sE$ is called \emph{perfect} if it is locally of finite presentation and quasi-perfect.
  \end{enumerate}
\end{definition}

\begin{lemma}
Suppose that $\fE$ is a vector bundle stack on a Deligne--Mumford stack $X$.  Then there is a morphism of sheaves of flat $\cO_X$-modules $\bF_1 \rightarrow \bF_0$ such that $\fE = [\bF_0 / \bF_1]$.
\end{lemma}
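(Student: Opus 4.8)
The plan is to unwind the definition of a vector bundle stack and present $\fE$ locally as a global quotient, then glue. Recall from \cite[Definition~1.9]{BF} that $\fE$ being a vector bundle stack means that \'etale-locally on $X$ there is a two-term complex of vector bundles $[\bG^{-1} \rightarrow \bG^0]$ (with $\bG^{-1}$, $\bG^0$ locally free) such that $\fE \simeq [\bG^0/\bG^{-1}]$, where the quotient is taken with respect to the action of $\bG^{-1}$ on $\bG^0$ through the differential. Dualizing, setting $\bF_1 = (\bG^{-1})^\vee$ and $\bF_0 = (\bG^0)^\vee$, we obtain \'etale-locally a presentation $\fE = [\bF_0/\bF_1]$ with $\bF_1 \rightarrow \bF_0$ a morphism of vector bundles, in particular of flat $\cO_X$-modules. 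The content of the lemma is that these local presentations can be chosen to glue into a single global morphism of flat (not necessarily locally free) $\cO_X$-modules.

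First I would fix an \'etale cover $\{U_\alpha \rightarrow X\}$ on which $\fE|_{U_\alpha} = [\bF_{0,\alpha}/\bF_{1,\alpha}]$ with $\bF_{i,\alpha}$ locally free. On overlaps $U_{\alpha\beta}$ the two presentations $[\bF_{1,\alpha} \rightarrow \bF_{0,\alpha}]$ and $[\bF_{1,\beta} \rightarrow \bF_{0,\beta}]$ present isomorphic Picard stacks, so by Lemma~\ref{lem:morphs} (the $2$-categorical Yoneda statement identifying morphisms of such complexes with maps in the derived category, applied after dualizing, or directly to the complexes of sheaves) the two complexes are related by a quasi-isomorphism, well-defined up to homotopy. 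The obstruction to choosing these quasi-isomorphisms compatibly on triple overlaps lies in a higher cohomology group, and the standard device to kill it is to replace each local complex by a large enough quasi-isomorphic one: after adding an acyclic complex $[\cO_{U_\alpha}^{n} \xrightarrow{\id} \cO_{U_\alpha}^{n}]$ to $\bF_{\bullet,\alpha}$ (which changes neither $\fE|_{U_\alpha}$ nor flatness), one can arrange that on overlaps the transition maps are honest isomorphisms of complexes, not just up to homotopy, or more robustly, one passes to the limit over all such stabilizations. In either formulation one obtains a globally defined two-term complex $[\bF_1 \rightarrow \bF_0]$ of $\cO_X$-modules that are \'etale-locally free, hence flat, together with an isomorphism $\fE \simeq [\bF_0/\bF_1]$.

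The main obstacle is precisely this gluing step: a vector bundle stack does not come with a canonical global presentation, only local ones, and the complexes presenting it are only unique up to quasi-isomorphism, so the naive attempt to patch the $\bF_{i,\alpha}$ directly fails. The resolution is to work not with a single complex but with the filtered system of all two-term complexes of flat modules representing $\fE$ (compatible under quasi-isomorphism), and extract a global object from it — essentially the same maneuver used to produce global resolutions of perfect complexes on a scheme with the resolution property. Since $X$ is a Deligne--Mumford stack and we only require flatness rather than local freeness of the resulting $\bF_i$, there is enough room: one may, for instance, take $\bF_0$ to be a flat module surjecting onto $H_0(\fE)$ compatibly with the local presentations and then define $\bF_1$ as the appropriate fibered product of the local kernels, checking flatness \'etale-locally where it reduces to the known local statement. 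The remaining verifications — that $[\bF_0/\bF_1]$ so constructed is isomorphic to $\fE$ as an $\cO_X$-module stack, and that the construction is independent of the choices up to the expected equivalence — are routine and follow from Lemma~\ref{lem:morphs} applied on the overlaps.
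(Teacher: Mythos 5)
There is a genuine gap: you correctly identify that the whole content of the lemma is producing a \emph{global} presentation from the local ones, but you never actually produce it. The route you lead with --- choosing quasi-isomorphisms between the local complexes $\bF_{\bullet,\alpha}$ on overlaps and then rigidifying them --- stalls exactly where you say it does. The claim that after adding acyclic summands ``one can arrange that on overlaps the transition maps are honest isomorphisms of complexes'' is unjustified and is, in general, the hard point (it is essentially the resolution property for the stack $X$, which this lemma is designed to avoid, and which is why the conclusion only asserts flatness rather than local freeness of the $\bF_i$); ``passing to the limit over all such stabilizations'' is not an argument. Your fallback in the last paragraph gestures at the right idea but misstates it: one wants a global flat sheaf surjecting onto the stack $\fE$ itself, not onto $H_0(\fE)$, and you give no mechanism for constructing one ``compatibly with the local presentations.'' The mechanism the paper uses requires no compatibility and no gluing at all: for each $U$ in an \'etale cover with a surjection $V_U \rightarrow \fE_U$ from a vector bundle, extend by zero along $i : U \rightarrow X$ and set $\bF_0 = \bigoplus_{i : U \rightarrow X} i_! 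V_U$. This is a single global sheaf surjecting onto $\fE$ (the summands need not interact), it is flat because $W \mapsto i_! V \tensor W = i_!(V \tensor i^\ast W)$ is a composite of exact functors, and one then takes $\bF_1 = \bF_0 \fp_{\fE} e$.

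A second, smaller gap: the flatness of $\bF_1$ does not ``reduce to the known local statement.'' Even \'etale-locally, $\bF_1$ is the kernel of the enormous surjection $\bF_0 \rightarrow \fE$, not one of your locally free $\bF_{1,\alpha}$, so an argument is needed. The paper's argument is that $\bF_\bullet$ is locally quasi-isomorphic to a two-term complex of vector bundles, hence has perfect amplitude in $[-1,0]$, so $\uTor_2(\bF_\bullet, J) = 0$; combined with the flatness of $\bF_0$ and the exact sequence $\uTor_2(\bF_\bullet, J) \rightarrow \uTor_1(\bF_1, J) \rightarrow \uTor_1(\bF_0, J)$ this forces $\uTor_1(\bF_1, J) = 0$. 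Your proposal omits any such step.
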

\begin{proof}
By definition, there is, \'etale-locally in $X$, a surjective linear map from a vector bundle onto $\fE$.  Choose an \'etale cover of $X$ such that on each $U$ in the cover there is a surjection $V_U \rightarrow \fE_U$.  Take $\bF_0$ to be the direct sum $\sum_{i : U \rightarrow X} i_! V_U$ and let $\bF_1 = \bF_0 \fp_{\fE} e$ be the kernel of the map $\bF_0 \rightarrow \fE$.  Then $\bF_0$ and $\bF_1$ are sheaves of $\cO_X$-modules and $\fE = [\bF_0 / \bF_1]$ since $\bF_0$ surjects onto $\fE$.

To check that $\bF_0$ is flat, it suffices to check that each $i_! V$ is flat if $V$ is.  We need to see that $W \mapsto i_! V \tensor W$ is left exact.  But $i_! V \tensor W = i_!(V \tensor i^\ast W)$ and $W \mapsto i_!(V \tensor i^\ast W)$ is the composition of the exact functors $i^\ast$, $V \tensor (-)$, and $i_!$.

We check that $\bF_1$ is also flat by showing that $\uTor_1(\bF_1, J) = 0$ for all sheaves of $\cO_X$-modules $J$.  The complex $\bF_\bullet$ has perfect amplitude in $[-1,0]$, so $\uTor_i(\bF_\bullet, -)$ vanishes for $i \not\in [0,1]$.  On the other hand, we have an exact sequence
\begin{equation*}
\uTor_2(\bF_\bullet, J) \rightarrow \uTor_1(\bF_1, J) \rightarrow \uTor_1(\bF_0, J)
\end{equation*}
for any sheaf of $\cO_X$-modules $J$.  As just noted, the first term of this sequence vanishes because $\bF_\bullet$ has perfect amplitude in $[-1,0]$ and the last term vanishes because $\bF_0$ was demonstrated above to be flat.
\end{proof}

\begin{proposition} \label{prop:vb}
If $\fE$ is a vector bundle stack on $X$ then there is a complex $\bE_\bullet$ of flat $\cO_X$-modules in degrees $[-1,0]$ and equivalences $\fE(A) \simeq \Psi_{\bE_\bullet}(A)$ for each $X$-algebra $A$, compatible with the covariance of both terms with respect to $A$.
\end{proposition}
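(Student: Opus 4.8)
The plan is to combine the presentation of $\fE$ supplied by the preceding lemma with the description of $\Psi$ as a category of extensions. By that lemma we may write $\fE = [\bF_0/\bF_1]$ for a homomorphism $d : \bF_1 \to \bF_0$ of flat $\cO_X$-modules, and the two-term complex $\bF_\bullet = [\bF_1 \to \bF_0]$, placed in degrees $[-1,0]$, has perfect amplitude in $[-1,0]$. The complex representing $\fE$ in the required sense will not be $\bF_\bullet$ itself but a flat model of its derived dual, shifted: I would take $\bE_\bullet$ to be a complex of flat $\cO_X$-modules concentrated in degrees $[-1,0]$ together with a quasi-isomorphism $\bE_\bullet \simeq R\underline{\Hom}(\bF_\bullet,\cO_X)[1]$. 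Such an $\bE_\bullet$ exists because $R\underline{\Hom}(\bF_\bullet,\cO_X)[1]$ is again perfect of amplitude $[-1,0]$: one may quote the existence of global flat models for perfect complexes of a given Tor-amplitude, or, to stay self-contained, simply re-run the construction in the proof of the preceding lemma on the vector bundle stack attached to $R\underline{\Hom}(\bF_\bullet,\cO_X)[1]$. Reflexivity of perfect complexes then supplies a quasi-isomorphism $R\underline{\Hom}(\bE_\bullet,\cO_X) \simeq \bF_\bullet[-1]$.

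Next I would identify both sides over each $X$-algebra $A$. Pulling back along $\Spec A \to X$, the stack $[\bF_0/\bF_1]$ has as its groupoid of sections over $\Spec A$ the abelian $2$-group $\tau_{[-1,0]} R\Gamma(\et(A), \bF_\bullet(A))$, and this $2$-group is $\fE(A)$. On the other hand $\Psi_{\bE_\bullet}(A)$ means $\Psi_{\bE_\bullet(A)}(\tA)$, the category of extensions of the complex of sheaves $\bE_\bullet(A)$ by $\tA$; its isomorphism classes form $\Ext^1(\bE_\bullet(A),\tA)$ and the automorphism group of any object is $\Hom(\bE_\bullet(A),\tA)$ (this is the content of Lemma~\ref{lem:morphs} applied fibrewise), so it is the abelian $2$-group $\tau_{[-1,0]} R\Gamma(\et(A), R\underline{\Hom}(\bE_\bullet(A),\tA)[1])$. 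The quasi-isomorphism above restricts over $\et(A)$ to $R\underline{\Hom}(\bE_\bullet(A),\tA)[1] \simeq \bF_\bullet(A)$, which identifies the two $2$-groups; since every construction used is functorial, the identification is natural in $A$ (and compatible with base change in $X$), which is the compatibility asserted. For a hands-on description one can, étale-locally on $X$ where $\bF_0$ and $\bF_1$ are vector bundles, send a section of $[\bF_0/\bF_1](A)$ — a $\widetilde{\bF_1(A)}$-torsor $P$ on $\et(A)$ with a $\widetilde{\bF_1(A)}$-equivariant map $\sigma : P \to \widetilde{\bF_0(A)}$ — to the extension of $\widetilde{\bF_1(A)}^\vee$ by $\tA$ formed by the affine-linear functions on $P$, equipped with the lift $\phi \mapsto \phi \circ \sigma$; this is the classical duality between torsors under a module and extensions of its dual, and it glues.

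The genuinely delicate point is the passage to the dual. The complex handed over by the preceding lemma points the wrong way for $\Psi$, and its terms are only flat, not locally free, so the naive termwise dual $\underline{\Hom}(\bF_\bullet,\cO_X)$ need not compute $R\underline{\Hom}(\bF_\bullet,\cO_X)$; one must therefore produce an honest flat complex in degrees $[-1,0]$ representing the derived dual and check it is well defined up to quasi-isomorphism, either by the structure theory of perfect complexes or by re-running the preceding lemma's construction and appealing to Lemma~\ref{lem:morphs} together with the truncation remark following Definition~\ref{def:rep-loc}. Once this is settled, the remaining work — the $\Ext$/$\Hom$ computation of the homotopy groups of $\Psi_{\bE_\bullet}(A)$ and the naturality in $A$ — is routine; if instead one runs the explicit torsor-to-extension functor, the routine part becomes verifying that it is an equivalence and that it descends along the chosen étale cover.
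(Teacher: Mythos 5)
You have the right mechanism---duality---but you apply it to the complex rather than to the stack, and the two steps you yourself flag as delicate are exactly where the argument remains open. First, the existence of a global flat model $\bE_\bullet$ in degrees $[-1,0]$ for $R\uHom(\bF_\bullet,\cO_X)[1]$ is not a quotable fact in this generality: the $\bF_i$ produced by the preceding lemma are infinite direct sums $\sum_{i:U\rightarrow X} i_! V_U$, so the termwise dual is useless, and ``re-running the lemma on the vector bundle stack attached to the derived dual'' presupposes both that this derived dual underlies a vector bundle stack and that the resulting presentation $[\bE_0/\bE_1]$ is \emph{globally} (not merely \'etale-locally) quasi-isomorphic to $R\uHom(\bF_\bullet,\cO_X)[1]$; the latter is a gluing statement you never address, and your appeal to reflexivity depends on it. Second, and more seriously, matching $\tau_{[-1,0]}R\Gamma$ of quasi-isomorphic complexes only identifies the homotopy sheaves of the two Picard stacks $\fE(A)$ and $\Psi_{\bE_\bullet}(A)$; that does not produce an equivalence of stacks, let alone one natural in $A$. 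Your fallback---the local torsor-to-extension functor followed by ``it glues''---is precisely Lemma~\ref{lem:vbcx}, and the gluing (coherently over an \'etale cover, compatibly with the only-derived-categorical relation between $\bE_\bullet$ and $\bF_\bullet$) is the actual content being asserted, not a routine verification.

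The paper closes both gaps at once by dualizing the \emph{stack} instead of the complex. By Corollary~\ref{cor:vbdual}, $\fE^\vee = \uHom(\fE, B\cO)$ is again a vector bundle stack, so the preceding lemma applies directly to it and yields a global flat presentation $\fE^\vee = [\bE_0/\bE_1]$ with $\bE_\bullet$ in degrees $[-1,0]$---no derived dual of $\bF_\bullet$ is ever formed, and in fact the presentation $[\bF_0/\bF_1]$ of $\fE$ itself is never used. Then $\fE \simeq (\fE^\vee)^\vee$ by Corollary~\ref{cor:vbdd}, and a section of $(\fE^\vee)^\vee$ over $\Spec A$ is \emph{by definition} a morphism of module stacks $\fE^\vee \rightarrow B\tA$; by the universal property of the presentation and left exactness this is literally a map $\bE_0 \tensor_{\cO_X} \tA \rightarrow B\tA$ together with a trivialization of the composite from $\bE_1$, i.e., an object of $\Psi_{\bE_\bullet}(A,A)$. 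The equivalence and its naturality in $A$ are thus read off from definitions rather than extracted from a comparison of homotopy groups. To repair your write-up, replace the derived-dual construction with this stack-level one.
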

\begin{proof}
By Corollary~\ref{cor:vbdual}, $\fE^\vee$ is also a vector bundle stack.  Therefore by the lemma there is a $2$-term complex of flat $\cO_X$-modules $\bE_\bullet$ in degrees $[-1,0]$ such that $\fE^\vee = [\bE_0 / \bE_1]$ and we have $(\fE^\vee)^\vee \simeq \fE$ (Corollary~\ref{cor:vbdd}).  On the other hand, a section of $(\fE^\vee)^\vee$ over $\Spec A$, for $A$ an $X$-algebra, can be identified with a morphism of $\tA$-module stacks $\fE^\vee \rightarrow B \tA$.  By left exactness, that is the same as a morphism $\bE_0 \tensor_{\cO_X} \tA \rightarrow B \tA$ and an isomorphism between the induced map $\bE_1 \tensor_{\cO_X} \tA \rightarrow B \tA$ and the zero map.  In other words, it is a commutative diagram
\begin{equation*} \xymatrix{
& & & \bE_1 \tensor_{\cO_X} \tA\ar[d] \ar[dl] \\
0 \ar[r] & \tA \ar[r] & Z \ar[r] & \bE_0 \tensor_{\cO_X} \tA \ar[r] & 0
} \end{equation*}
in which the second row is exact (cf.\ the proof of Lemma~\ref{lem:vbcx} in the appendix).  But to give such a diagram is the same as to give an object of $\Psi_{\bE_\bullet}(A, A)$, by definition.
\end{proof}

\begin{proposition}\label{prop:perf}
Suppose that $X$ is a Deligne--Mumford stack and $\sE$ is a cartesian, left exact, additively cofibered category on $X\hAlgMod$ and that the associated abelian cone stack $\fE$ of $\sE$ is a vector bundle stack.  Assume also that $\sE$ is locally of finite presentation and $X$ is locally of finite presentation over $Y$.  Then there is a perfect complex $\bE_\bullet$, of perfect amplitude in $[-1,0]$, and an equivalence of fibered categories
\begin{equation*}
\sE \simeq \Psi_{\bE_\bullet}
\end{equation*}
that is determined, up to unique isomorphism, by $\fE$.
\end{proposition}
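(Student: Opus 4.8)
The plan is to reduce to the affine case, use local finite presentation and left exactness to represent $\sE$ abstractly by a two-term complex via Grothendieck's representability theorem, then invoke the vector bundle stack hypothesis to identify that complex with the one read off from $\fE$ and to see that it is perfect of amplitude $[-1,0]$, and finally to glue.

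Since $\sE$, the formation of $\fE$, and the assignment $\bG_\bullet \mapsto \Psi_{\bG_\bullet}$ all satisfy \'etale descent and $X$ is Deligne--Mumford, I would first reduce to the case $X = \Spec A_0$ with $A_0$ affine (using also that $X$ is locally of finite presentation over $Y$). By Proposition~\ref{prop:cart}, the stack $\sE$ over $X\hAlgMod$ is then recovered from the single left exact, additively cofibered category $\sE(A_0,-)$ over $A_0\hMod$: every $X$-algebra $B$ carries a structure map $f\colon A_0 \to B$ along which $\sE(B,J) \simeq \sE(A_0, J_{[f]})$, and similarly for $\Psi_{\bE_\bullet}$. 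So it is enough to produce $\bE_\bullet$ and an equivalence over $A_0\hMod$, compatibly with the covariance in $A_0$.

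Now comes the heart of the matter. The category $\sE(A_0,-)$ is left exact, additively cofibered and locally of finite presentation, so Grothendieck's representability theorem \cite{ccct} gives a two-term complex $\bF_\bullet$ with $\sE(A_0,-) \simeq \Psi_{\bF_\bullet}$, in which $\bF_0$ is flat by left exactness and (by local finite presentation) both terms are finitely presented, so $\bF_0$ is finite locally free. Restricting this equivalence to the diagonal identifies $\fE$ with $\Psi_{\bF_\bullet}(A_0,A_0)$. I then bring in the vector bundle stack hypothesis: $\fE^\vee$ is again a vector bundle stack (Corollary~\ref{cor:vbdual}), so it admits a presentation $\fE^\vee = [\bE_0/\bE_1]$ by finite locally free $\cO_X$-modules, and by biduality (Corollary~\ref{cor:vbdd}) and the computation in the proof of Proposition~\ref{prop:vb} the complex $\bE_\bullet$ in degrees $[-1,0]$ is perfect of amplitude $[-1,0]$, depends only on $\fE$, and satisfies $\fE \simeq \Psi_{\bE_\bullet}(A_0,A_0)$. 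Since a perfect complex of amplitude $[-1,0]$ is determined up to quasi-isomorphism by its associated module stack on the diagonal --- via its dual together with the full faithfulness of $\bG_\bullet \mapsto \Psi_{\bG_\bullet}$ (Lemma~\ref{lem:morphs}) --- the complex $\bF_\bullet$ must be quasi-isomorphic to $\bE_\bullet$; in particular $\bF_\bullet$ too is perfect of amplitude $[-1,0]$. By Lemma~\ref{lem:morphs}~(iii) the quasi-isomorphism induces an equivalence $\Psi_{\bF_\bullet} \simeq \Psi_{\bE_\bullet}$, whence $\sE(A_0,-) \simeq \Psi_{\bE_\bullet}$. Finally, since $\bE_\bullet$ is pinned down by the global object $\fE$ (through $\fE^\vee$) and perfect complexes satisfy \'etale descent, the locally constructed data glue to a global perfect complex $\bE_\bullet$ on $X$ and a global equivalence $\sE \simeq \Psi_{\bE_\bullet}$; the determination up to unique isomorphism by $\fE$ is one more application of Lemma~\ref{lem:morphs}.

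The step I expect to be the main obstacle is the appeal to Grothendieck's representability theorem and, interwoven with it, the verification that the two finite presentation hypotheses --- on $\sE$ and on $X$ over $Y$ --- furnish precisely the accessibility and finiteness input that theorem requires, so that the representing complex may be taken with $\bF_0$ flat and both terms finitely presented; only then does the vector bundle stack hypothesis suffice to upgrade it to a perfect complex of amplitude $[-1,0]$. A secondary, more bookkeeping-heavy point is checking that each equivalence above --- and the final gluing --- respects the covariant dependence on $A_0$, so that the result is an equivalence of fibered categories rather than merely a fiberwise one.
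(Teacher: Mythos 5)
Your argument stands or falls on the opening appeal to ``Grothendieck's representability theorem,'' and as stated that step does not go through. What \cite{ccct} provides for a general left exact additively cofibered category over $A_0\hMod$ is only \emph{pro}-representability; there is no theorem that hands you an honest two-term complex $\bF_\bullet$ with $\sE(A_0,-)\simeq\Psi_{\bF_\bullet}$, let alone one with $\bF_0$ flat (flatness of $\bF_0$ is not a consequence of left exactness --- $\Psi_{\bF_\bullet}$ is left exact for \emph{any} complex) or with finitely presented terms. You correctly flag this as the main obstacle, but the rest of your proof is downstream of it: the comparison of $\bF_\bullet$ with the complex $\bE_\bullet$ read off from $\fE^\vee$ presupposes that $\sE$ is already known to be representable, and the claim that a complex is determined by its value on the diagonal is only available (via duality) once you know the complex is perfect of amplitude $[-1,0]$, which is what you are trying to prove --- so that comparison cannot be used to bootstrap perfection of $\bF_\bullet$.

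The paper runs the argument in the opposite direction, and that reversal is the content of the proof. It takes $\bE_\bullet$ directly from $\fE$ (via $\fE^\vee$ and Proposition~\ref{prop:vb}), so that at the outset the equivalence $\Psi_{\bE_\bullet}\simeq\sE$ is known only on pairs $(A,I)$ with $I\cong A$; it then extends this to free, locally free, and complexes of locally free modules by additivity, descent, and left exactness, and \emph{constructs} a map $\Psi_{\bE_\bullet}\to\sE$ on all of $X\hAlgMod$ by choosing a local free resolution $\tbF_\bullet\to\bE_\bullet$, transporting the identity object of $\Psi_{\bF_\bullet}(\bF_\bullet[-1])$ into $\sE(\bF_\bullet[-1])$, and invoking the Yoneda-type Lemma~\ref{lem:morphs}~(i) (plus a chain-homotopy lemma to check independence of the resolution). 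Only then does local finite presentation enter, and not to invoke representability but to prove the constructed map is an equivalence by d\'evissage: reduce to $A$ of finite type over $\bZ$ and $I$ finite, filter $I$ with associated graded a sum of modules $A/\frkp$, handle the base case through the cartesian property ($\sE(A,A/\frkp)\simeq\fE(A/\frkp)$), and climb the filtration with a five-lemma argument using exactness of $\Psi_{\bE_\bullet}$. If you want to salvage your outline, you would need to supply that extension-and-d\'evissage argument in place of the representability theorem.
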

\begin{proof}
The first step will be to construct the map $\Psi_{\bE_\bullet} \rightarrow \sE$.  Then we will show it is an equivalence.

By Proposition~\ref{prop:vb}, for $\fE$ to be a vector bundle stack means that the cofibered category obtained by restricting $\sE$ to pairs $(A, I)$ where $I$ is isomorphic to $A$ is equivalent to the cofibered category obtained by restricting $\Psi_{\bE_\bullet}$ to the same subcategory, for some chain complex $\bE_\bullet$ of $\cO_X$-modules of perfect amplitude in $[-1,0]$.  Since both $\Psi_{\bE_\bullet}$ and $\sE$ are additively cofibered, this equivalence can be extended in an essentially unique way to the category of pairs $(A,I)$ where $I$ is a free $A$-module.  Since both $\Psi_{\bE_\bullet}$ and $\sE$ are stacks over $X\hAlg$, this equivalence can be further extended in an essentially unqiue way to the category of $(A,I)$ where $I$ is a \emph{locally free} $A$-module.  Since both are left exact, this equivalence extends even to the cofibered categories $\Psi_{\bE_\bullet}$ and $\sE$ on pairs $(A,\bI^\bullet)$ where $\bI^\bullet$ is a cochain complex of locally free $A$-modules.

The proposition asserts that the isomorphism between $\sE$ and $\Psi_{\bE_\bullet}$ is unique, which means that if the proposition is proved locally, the local statements will automatically glue to a global statement.  That is, to construct the maps $\Psi_{\bE_\bullet}(A,I) \xrightarrow{\sim} \sE(A, I)$ is a local problem on the \'etale site of $A$, provided that we also demonstrate they are uniquely determined and compatible with further localization.  We can therefore work locally and assume (using \cite[Corollary~4.3]{perf}) that there is a quasi-isomorphism $\tbF_\bullet \rightarrow \bE_\bullet$ where $\bF_\bullet$ is a $2$-term complex of \emph{free} $A$-modules, with $F_i = 0$ for $i \not= 0,1$, and $\tbF_\bullet$ is its associated complex of sheaves.  The perfect complex $\bF_\bullet$ is not unique, however, so we will to argue later that our constructions depend on $\bF_\bullet$ only up to unique isomorphism.  

Now that we are working locally, say over a ring $A$, we can use the fact that $\sE$ and $\Psi_{\bE_\bullet}$ are cartesian to reduce the problem to constructing an equivalence between their restrictions to the category of $A$-modules.  Indeed, if $\varphi : A \rightarrow B$ is a homomorphism of $X$-algebras  and $J$ is a $B$-module, then $\Ext_B(\bE_\bullet(B), J) \rightarrow \Ext_A(\bE_\bullet(A), J_{[\varphi]})$ is an equivalence, as is $\sE(B,J) \rightarrow \sE(A,J_{[\varphi]})$.  Accordingly, we now drop reference to the ring $A$ from the notation, and simply write $\Psi_{\bE_\bullet}(I)$, $\Ext(\bE_\bullet, \tI)$ and $\sE(I)$ in place of $\Psi_{\bE_\bullet}(A,I)$, $\Ext_A(\bE_\bullet(A), \tI)$ and $\sE(A,I)$ for an $A$-module $I$.

There is an object $\xi \in \Psi_{\bE_\bullet}(\bF_\bullet) = \Ext(\bE_\bullet, \tbF_\bullet[-1])$, well-defined up to unique isomorphism, corresponding to the identity map $\id_{\bF_\bullet}$ via the equivalence (\ref{lem:psi-equiv}):
\begin{equation*}
\Psi_{\bE_\bullet}(\bF_\bullet[-1]) \rightarrow \Psi_{\bF_\bullet}(\bF_\bullet[-1]) .
\end{equation*}
Let $\eta$ be the image of $\xi$ via the equivalence
\begin{equation*}
\Psi_{\bE_\bullet}(\bF_\bullet[-1]) \rightarrow \sE(\bF_\bullet[-1]).
\end{equation*}
By~\ref{lem:adj}, this extends, uniquely up to unique isomorphism, to a cocartesian section of $\sE$ over $\Psi_{\bF_\bullet}$.

We have therefore constructed a morphism $\Psi_{\bF_\bullet} \rightarrow \sE$.  By composition with the equivalence $\Psi_{\bE_\bullet} \rightarrow \Psi_{\bF_\bullet}$, we obtain a map
\begin{equation}\label{eqn:7}
\Psi_{\bE_\bullet} \rightarrow \sE
\end{equation}
again determined uniquely up to unique isomorphism from $\bF_\bullet \rightarrow \bE_\bullet$.

In the above discussion, every choice was determined uniquely up to unique isomorphism except for the map $\tbF_\bullet \rightarrow \bE_\bullet$.  In fact, this map is uniquely determined up to contractible ambiguity in a $2$-category of $2$-term chain complexes, but instead of using this we will verify explicitly that the definition of the map $\Psi_{\bE_\bullet} \rightarrow \sE$ depends on the choice of the quasi-isomorphism $\tbF_\bullet \rightarrow \bE_\bullet$ only up to unique isomorphism.

Suppose that we are given two quasi-isomorphisms $u : \tbF_\bullet \rightarrow \bE_\bullet$ and $v : \tbF'_\bullet \rightarrow \bE_\bullet$ where $\bF_\bullet$ and $\bF'_\bullet$ are chain complexes of free $A$-modules, concentrated in degrees $[-1,0]$.   Then by \cite[4.2]{perf}, there is a quasi-isomorphism $w : \bF_\bullet \rightarrow \bF'_\bullet$ and a chain homomotpy $h$ between $u$ and the induced map $v \tw : \tbF_\bullet \rightarrow \bE_\bullet$ (recall that $\tw : \tbF_\bullet \rightarrow \tbF'_\bullet$ is the morphism of complexes of sheaves associated to $w$).  Suppose that $w' : \bF_\bullet \rightarrow \bF'_\bullet$ is another morphism and that $h'$ is a chain homotopy connecting $v\tw'$ and $u$.  Then $h - h'$ is a chain homotopy between $v(\tw - \tw')$ and the zero map.

\begin{lemma}
Suppose that $w : \bF_\bullet \rightarrow \bF'_\bullet$ is a morphism of $2$-term complexes of free $A$-modules and $v : \tbF'_\bullet \rightarrow \bE_\bullet$ is a quasi-isomorphism of $\tA$-modules on $\et(A)$.  Then any chain homotopy $v\tw \simeq 0$ is induced from a \emph{unique} chain homotopy $w \simeq 0$.
\end{lemma}
\begin{proof}
First we note that chain homotopies between $w$ and zero form a pseudo-torsor under $\Hom(\bF_\bullet, \bF'_\bullet[-1]) = \Hom(H_0(\bF_\bullet), H_1(\bF'_\bullet))$.  Analogously, chain homotopies between $v \tw$ and zero form a torsor under $\Hom(\tbF_\bullet, \bE_\bullet[-1]) = \Hom(H_0(\bF_\bullet)^\sim, H_1(\bE_\bullet))$.  The map
\begin{equation*}
\Hom(H_0(\bF_\bullet), H_1(\bF'_\bullet)) \rightarrow \Hom(H_0(\bF_\bullet)^\sim, H_1(\bE_\bullet))
\end{equation*}
is a bijection because $v$ is a quasi-isomorphism.  Therefore if $h : vw \simeq 0$ is a chain homotopy in $\Hom(\bF_\bullet, \bE_\bullet)$, there is at most one lift of $h$ to a chain homotopy between $v$ and $0$ in $\Hom(\bF_\bullet, \bF'_\bullet)$.

Now we argue that a chain homotopy lifting $h$ must exist.  Since $v$ is a quasi-isomorphism and $vw$ induces the zero map on homology, $w$ also induces the zero map on homology.  We therefore have a commutative diagram with exact rows:
\begin{equation*} \xymatrix{
0 \ar[r] & H_1(\bF_\bullet) \ar[r] \ar[d]_0 & \bF_1 \ar[r]^d \ar[d]_{w_1} & \bF_0 \ar@{-->}[dl]_{g} \ar[d]^{w_0} \ar[r] \ar[dr]^0 & H_0(\bF_\bullet) \ar[r] \ar[d]^0 & 0 \\
0 \ar[r] & H_1(\bF'_\bullet) \ar[r] & \bF'_1 \ar[r]_d & \bF'_0 \ar[r] & H_0(\bF'_\bullet) \ar[r] & 0
} \end{equation*}
Since $\bF_0$ is free, there is certainly a dashed arrow $g$ such that $d g = w_0$.  The failure of this map to be a chain homotopy is measured be the difference $g d - w_1$.  However, $d  g d - dw_1 = w_0 d - w_0 d = 0$, so the map $g d - w_1$ factors through a map $\bF_1 \rightarrow H_1(\bF'_\bullet)$.  Moreover, this map vanishes on $H_1(\bF_\bullet) \subset \bF_1$, so it factors through a map $\bF_1 / H_1(\bF_\bullet) \rightarrow H_1(\bF'_\bullet)$.  We would like to extend this map to a map $\bF_0 \rightarrow H_1(\bF'_\bullet)$, for then we could subtract this map from $g$ to get the chain homotopy we are looking for.  The obstruction to the existence of such a lift lies in $\Ext^1(H_0(\bF_\bullet), H_1(\bF'_\bullet))$ and does not depend on the choice of the original map $g$.  But the map
\begin{equation*}
\Ext^1(H_0(\bF_\bullet), H_1(\bF'_\bullet)) \rightarrow \Ext^1(H_0(\tbF_\bullet), H_1(\bE_\bullet))
\end{equation*}
is an isomorphism (because $v$ is a quasi-isomorphism) and by construction, it sends the obstruction to the existence of a chain homotopy $w \simeq 0$ to the obstruction to the existence of a chain homotopy $vw \simeq 0$.  The latter obstruction is zero by assumption, and therefore so is the former.
\end{proof}

By the lemma, there is a \emph{unique} chain homotopy $w \simeq w' : \bF_\bullet \rightarrow \bF'_\bullet$.  This induces a uniquely determined isomorphism $w_\ast \xi \simeq w'_\ast \xi \in \Psi_{\bE_\bullet}(\bF'_\bullet[-1])$ and therefore a uniquely determined isomorphism between the induced objects of $\sE(\bF'_\bullet)$.  This proceeds to give an isomorphism between the induced cocartesian sections of $\sE$ over $\Psi_{\bF'_\bullet}$, and therefore between the induced maps $\Psi_{\bE_\bullet} \rightarrow \sE$.

\vskip .5cm
For the rest of this proof, we write $\sF$ for the additively cofibered stack $\Psi_{\bE_\bullet}$.  To demonstrate that the map $\sF \rightarrow \sE$ is an equivalence is a local problem, so we are free to assume that $X$ is representable by a commutative ring $A$ and $\bE_\bullet$ is representable by a $2$-term complex of free $A$-modules.

We note that because $X$ is assumed to be locally of finite presentation, any $X$-algebra $A$ admits a map from an $X$-algebra $A_0$ whose underlying commutative ring is finitely generated over $\bZ$.  Likewise, the local finite presentation of $\sE$ implies that any object of $\sE(A, I)$ is induced from some object of $\sE(A_1, I_1)$ where $A_1$ is an $A_0$-algebra of finite type and $I_1$ is an $A_1$-module of finite type.  Moreover, by the same argument, any two such representations of an object of $\sE(A,I)$ as objects over finite type objects of $\ComRngMod$ can be compared over a finite type object.  It follows that to demonstrate $\sF \rightarrow \sE$ is an equivalence, it suffices to check that $\sF(A, I) \rightarrow \sE(A,I)$ is an equivalence whenever $A$ is an $X$-algebra whose underlying commutative ring is of finite type over $\bZ$ and $I$ is an $A$-module of finite type.

In particular, we can assume that $A$ is noetherian and therefore that $I$ admits a filtration of finite length whose associated graded module is a direct sum of modules $A/\frkp$ where $\frkp$ is a prime ideal of $A$ \cite[Theorem~6.4]{M2}.  We note furthermore that $\sF$ is an \emph{exact} additively cofibered category.  Applying the following lemma inductively completes the proof.
\begin{lemma}
  Suppose that $\sF \rightarrow \sE$ is a morphism of left exact additively cofibered categories over an abelian category $\sC$ and that
  \begin{equation*}
    0 \rightarrow I \rightarrow J \rightarrow K \rightarrow 0
  \end{equation*}
  is an exact sequence in $\sC$ such that $\sF(I) \rightarrow \sE(I)$ and $\sF(K) \rightarrow \sE(K)$ are isomorphisms.  Assume also that $\sF$ is right exact.  Then $\sF(J) \rightarrow \sE(J)$ is also an isomorphism.
\end{lemma}
  We have to show that $\sF(J) \rightarrow \sE(J)$ is fully faithful and essentially surjective.  For an additively cofibered category $\sG$, write $\sG_1(L)$ and $\sG_0(L)$, respectively, for the isomorphism classes in $\sG(L)$ and the automorphism group of the identity object of $\sG(L)$.  We use the exact sequence \cite[(2.5.2)]{ccct} associated to $\sF$ and $\sE$ (note, however, that our notation differs:  in loc.\ cit.\ $\sF^0$ denotes the isomorphism group of the identity and $\sF^1$ denotes the set of isomorphism classes).  The rows of the commutative diagram
  \begin{equation*} \xymatrix@R=10pt{
    0 \ar[r] & \sF_1(I) \ar[r] \ar[d] & \sF_1(J) \ar[r] \ar[d] & \sF_1(K) \ar[r] \ar[d] & \sF_0(I) \ar[d] \\
    0 \ar[r] & \sE_1(I) \ar[r] & \sE_1(J) \ar[r] & \sE_1(K) \ar[r] & \sE_0(I)
  } \end{equation*}
  are exact, so the $5$-lemma implies the faithfulness; the exactness of the rows of
  \begin{equation*} \xymatrix@R=10pt{
    \sF_1(K) \ar[r] \ar[d] & \sF_0(I) \ar[r] \ar[d] & \sF_0(J) \ar[r] \ar[d] & \sF_0(K) \ar[d] \ar[r] & 0 \\
    \sE_1(K) \ar[r]  & \sE_0(I) \ar[r]  & \sE_0(J) \ar[r]  & \sE_0(K)  
  } \end{equation*}
  and a version of the $5$-lemma implies that the functor is full and essentially surjective.  Note that the surjectivity of $\sF_0(J) \rightarrow \sF_0(K)$ comes from the exactness of $\sF$.
\end{proof}

\section{Obstruction groups} \label{sec:obgrp}

Let $\sE$ be an obstruction theory for a morphism of homogeneous stacks $h : X \rightarrow Y$.  For each $X$-algebra $A$ and each $A$-module $I$, write $\sE_0(A, I)$ for the abelian group of isomorphism classes in $\sE(A,I)$ and $\sE_1(A,I)$ for the abelian group of automorphisms of the identity section of $\sE(A,I)$ (cf.\ \cite[Section~1.6]{ccct}, but note the difference in the indexing).

\begin{lemma}
Suppose that $A_0$ is an $A$-algebra such that the $A$-module $I$ is induced from an $A_0$-module $I_0$.  Then the $A$-module structures on $\sE_0(A,I)$ and $\sE_1(A,I)$ is induced from the $A_0$-module structures on $\sE_0(A_0,I_0)$ and $\sE_1(A_0,I_0)$.
\end{lemma}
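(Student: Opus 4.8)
The plan is to realize all of the asserted $A$-module isomorphisms as coming from a single equivalence of categories, and then to track the scalar multiplications through it. First I would observe that $A_0$ acquires an $X$-algebra structure by composing its structure morphism $\varphi\colon A\to A_0$ with that of $A$, so that $\sE(A_0,I_0)$ is defined, and that the hypothesis identifies the $A$-module $I$ with $(I_0)_{[\varphi]}$. The canonical morphism $(A,I)\to(A_0,I_0)$ of $X\hAlgMod$ lying over $\varphi$, whose module component is the identity of $(I_0)_{[\varphi]}$, is then cartesian over $X\hAlg$. Since, by Definition~\ref{def:obs}, an obstruction theory is a \emph{cartesian} stack of left exact additively cofibered categories, Proposition~\ref{prop:cart} applies and shows that the associated pushforward functor
\begin{equation*}
F\colon \sE(A,I)\longrightarrow \sE(A_0,I_0)
\end{equation*}
is an equivalence. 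Being a functor between abelian $2$-groups, it induces isomorphisms of abelian groups $F_0\colon \sE_0(A,I)\xrightarrow{\sim}\sE_0(A_0,I_0)$ and $F_1\colon \sE_1(A,I)\xrightarrow{\sim}\sE_1(A_0,I_0)$, so the lemma is reduced to checking that these are $\varphi$-linear, i.e.\ that $F_i(ax)=\varphi(a)F_i(x)$ for $a\in A$.

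Here I would recall (following \cite[Section~1.6]{ccct}) that the $A$-module structure on $\sE_i(A,M)$ is the one induced by the endomorphisms $(a_M)_\ast$ of $\sE(A,M)$, where $a_M\colon M\to M$ is multiplication by $a$, and likewise the $A_0$-module structure on $\sE_i(A_0,I_0)$ is induced by the $(b_{I_0})_\ast$ for $b\in A_0$. The key observation is that, for $a\in A$, multiplication by $a$ on $(I_0)_{[\varphi]}$ is literally the same map of abelian groups as multiplication by $\varphi(a)$ on $I_0$. As a morphism of $X\hAlgMod$ is determined by its ring component and its module component, this makes the square
\begin{equation*} \xymatrix{
(A,I) \ar[r]^{a_I} \ar[d] & (A,I) \ar[d] \\
(A_0,I_0) \ar[r]^{\varphi(a)_{I_0}} & (A_0,I_0)
} \end{equation*}
commute in $X\hAlgMod$ --- both composites lie over $\varphi$ and carry the same module component. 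Applying $\sE$ and using the functoriality of the cofibered structure yields a canonical isomorphism $F\circ(a_I)_\ast\simeq(\varphi(a)_{I_0})_\ast\circ F$; passing to isomorphism classes and to automorphism groups of the identity then gives $F_i(ax)=\varphi(a)F_i(x)$, which says exactly that $F_i$ identifies $\sE_i(A,I)$ with the $A_0$-module $\sE_i(A_0,I_0)$ viewed as an $A$-module by restriction of scalars along $\varphi$ --- the assertion of the lemma.

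I do not expect a serious obstacle here: the only step requiring any care is the commutativity of the displayed square in $X\hAlgMod$, which comes down to the formal fact that restriction of scalars turns multiplication by $\varphi(a)$ into multiplication by $a$, everything else being a direct application of Proposition~\ref{prop:cart} and of the functoriality of an additively cofibered category. One can even avoid separately invoking that $F$ is additive (as a pushforward of abelian $2$-groups) by noting that the very same square argument, with $a_I$ replaced by the codiagonal $I\oplus I\to I$, establishes the additivity of $F$ directly.
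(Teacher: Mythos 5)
Your proof is correct and follows essentially the same route as the paper, which likewise invokes the equivalence $\sE(A,I)\simeq\sE(A_0,I_0)$ from Proposition~\ref{prop:cart} and, as an alternative, the observation that $A\to\End_A(I)$ factors through $A_0$ (your commuting square is exactly this factorization made explicit). You have merely combined the paper's two terse arguments into one carefully tracked verification of $\varphi$-linearity.
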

\begin{proof}
One may use the fact that $\sE_i(A,I) \simeq \sE_i(A_0,I_0)$ because $\sE(A,I) \simeq \sE(A_0,I_0)$ (see Proposition~\ref{prop:cart}~\ref{cart:3}).

Alternatively, note that the $A$-module structure on $\sE_i(A,I)$ is induced from the map $A \rightarrow \End_A(I) \rightarrow \End(\sE_i(A,I))$ by the functoriality of $\sE_i(A,I)$ with respect to $A$-module homomorphisms.  The map $A \rightarrow \End_A(I)$ factors through $A_0$ by hypothesis.
\end{proof}

Suppose that $A$ is an $X$-algebra and $A'$ is a square-zero $Y$-algebra extension of $A$ with ideal $I$.  The corresponding object $\xi$ of $h^\ast \Exal_Y$ induces an obstruction $\omega \in \sE(A,I)$, whose isomorphism class in $\sE_0(A,I)$ we denote by $\oomega$.  Then $\oomega$ is zero if and only if $\xi$ is induced from some object of $\Exal_X$, meaning that there is an $X$-algebra structure on $A'$ inducing both the given $X$-algebra structure on $A$ and the $Y$-algebra structure on $A'$.  In geometric terms, $\oomega$ functions as an obstruction to the existence of a solution to the lifting problem
\begin{equation*} \xymatrix{
\Spec A \ar[r] \ar[d] & X \ar[d] \\
\Spec A' \ar[r] \ar@{-->}[ur] & Y .
} \end{equation*}
This implies that the obstruction groups $\sE_0(A,I)$ satisfy \cite[(2.6)~(ii)]{versal}.  The obstruction groups $\sE_0(A,I)$ are also functorial in $(A,I)$ ``in the obvious sense'' (loc.\ cit.) but they need not take finite modules to finite modules (as in \cite[(2.6)~(i)]{versal}) without an extra hypothesis.

Should $\oomega$ vanish, the lifts of $\xi$ to $\Exal_X$ are in bijection with the isomorphisms between $\omega$ and a (fixed) zero section of $\sE(A,I)$.  Such isomorphisms form a torsor under the automorphism group of the zero section, namely $\sE_1(A,I)$.  

\subsection{Obstruction sheaves}

Suppose that $X \rightarrow Y$ is a morphism of homogeneous stacks and $\sE$ is an obstrution theory for $X$ over $Y$.  For each $X$-algebra $A$ and each $A$-module $I$, define $\usE(A,I)$ to be the fibered category on $\et(A)$ whose value on an \'etale $A$-algebra $B$ is $\usE(B, I \tensor_A B)$.  Of course, $\usE(A,I)$ is a stack on $\et(A)$ because it is the restriction of a stack from the large \'etale site.

Define $\usE_1(A,I)$ and $\usE_0(A,I)$ analogously.

\begin{proposition} \label{prop:qcoh}
Suppose $X$ is of Deligne--Mumford type over $Y$, let $A$ be an $X$-algebra, and let $I$ be an $A$-module.  Then the presheaf $\usE_1(A,I)$ is a quasi-coherent sheaf.
\end{proposition}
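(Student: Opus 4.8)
The plan is to reduce the statement to the universal obstruction theory $\sN_{X/Y}$ and then to describe $\usE_1(A,I)$ explicitly as a sheaf of homomorphisms out of a finitely presented module. First I would invoke Proposition~\ref{prop:ins}: since $\sE$ is an obstruction theory for $X$ over $Y$ and $X\to Y$ is of Deligne--Mumford type, there is a fully faithful morphism $\sN_{X/Y}\to\sE$ of additively cofibered categories over $X\hAlgMod$ respecting arrows that are cocartesian over $X\hAlgMod$ and those that are cartesian over $X\hAlg$. Because the zero sections of $\sN_{X/Y}$ and $\sE$ are cocartesian sections, such a morphism carries $e(A,I)$ to $e(A,I)$ for every $(A,I)$, and full faithfulness then yields an isomorphism of automorphism groups $\sN_{X/Y,1}(A,I)\xrightarrow{\sim}\sE_1(A,I)$, natural in $(A,I)$ and compatible with the $A$-module structures. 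Sheafifying on $\et(A)$, it suffices to prove the proposition when $\sE=\sN_{X/Y}$; recall also that $\usE_1(A,I)$ is a priori a sheaf, being $\pi_1$ of the stack $\usE$, so the content is quasi-coherence.

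Next I would compute this sheaf. Let $f\colon\et(A)\to\et(X)$ be the structure map and let $M$ be the $A$-module of global sections of the quasi-coherent sheaf $f^\ast\Omega_{X/Y}=f^\ast H^0(\bL_{X/Y})$ on $\et(A)$. For $B$ \'etale over $A$, the sections of $\usE_1(A,I)$ over $\Spec B$ are the automorphisms of the zero object of $\sN_{X/Y}(B,I\tensor_A B)$, i.e.\ of the trivial square-zero extension of $f^\ast\cO_X$ by $(I\tensor_A B)^\sim$ as an $f^\ast h^\ast\cO_Y$-algebra on $\et(B)$. An automorphism of this extension is precisely an $f^\ast h^\ast\cO_Y$-linear derivation $f^\ast\cO_X\to(I\tensor_A B)^\sim$, so the group in question is $\Hom(f^\ast\Omega_{X/Y},(I\tensor_A B)^\sim)$ (the pullback functor commuting with the formation of K\"ahler differentials), which equals $\Hom_A(M,I\tensor_A B)$ since $f^\ast\Omega_{X/Y}$ pulls back to $\et(B)$ as the sheaf associated to $M\tensor_A B$. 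Alternatively one can argue from the identification $\sN_{X/Y}(C,J)=\Ext(f^\ast\bL_{X/Y},\tJ)$ of Section~\ref{sec:cc}, under which the automorphisms of the zero object are $\Ext^0(f^\ast\bL_{X/Y},\tJ)=\Hom(f^\ast H^0(\bL_{X/Y}),\tJ)$. Either way, $\usE_1(A,I)$ is the sheaf on $\et(A)$ whose sections over $\Spec B$ are $\Hom_A(M,I\tensor_A B)$.

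To conclude, recall that $X\to Y$, being of Deligne--Mumford type, is locally of finite presentation, so $\Omega_{X/Y}$ and hence $M$ is finitely presented. Choosing, \'etale-locally on $\Spec A$, a presentation $A^{\oplus m}\to A^{\oplus n}\to M\to 0$ and applying $\Hom_A(-,I\tensor_A B)$ exhibits $\usE_1(A,I)$ as the kernel of a map $\tI^{\oplus n}\to\tI^{\oplus m}$ of quasi-coherent $\tA$-modules; since quasi-coherent sheaves on $\et(A)$ are closed under kernels, $\usE_1(A,I)$ is quasi-coherent --- indeed it is the quasi-coherent sheaf associated to the $A$-module $\Hom_A(M,I)$.

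The step I expect to be the main obstacle is the computation in the second paragraph when the structure map $\Spec B\to X$ is \emph{not} \'etale, where one must work with the sheaf of rings $f^\ast\cO_X$ (and the complex $f^\ast\bL_{X/Y}$) on the small \'etale site of $B$, verify that the relative K\"ahler differentials of $f^\ast\cO_X$ over $f^\ast h^\ast\cO_Y$ are $f^\ast\Omega_{X/Y}$, and check compatibility with pullback along \'etale $A$-algebras. A secondary but essential point is that finite presentation of $\Omega_{X/Y}$ cannot be dropped: otherwise $\Hom_A(M,-)$ need not commute with \'etale base change (take $M$ free of infinite rank), and this is exactly where the Deligne--Mumford-type hypothesis is used.
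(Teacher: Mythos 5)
Your proof is correct and follows essentially the same route as the paper: both identify $\usE_1(A,I)$ with the group of relative derivations of $X$ over $Y$ into $I$ (the paper does this directly from the cartesian square~\eqref{eqn:6}, writing $\sE_1(A,I) = e(A,I) \fp_{\Exal_Y(A,I)} \Exal_X(A,I)$, while you route through the fully faithful map $\sN_{X/Y} \rightarrow \sE$ of Proposition~\ref{prop:ins} --- these amount to the same identification), and both then express this group as $\Hom$ out of the module of relative K\"ahler differentials and conclude by flat base change. The paper localizes until $X = \Spec D$ and $Y = \Spec C$ and computes $\Hom_D(\Omega_{D/C}, I \tensor_A B) = \Hom_D(\Omega_{D/C}, I) \tensor_A B$, which is exactly your final step; the finite-presentation hypothesis on $\Omega_{X/Y}$ that you rightly flag as essential is used implicitly there as well.
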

\begin{proof}
It is clear that $\sE_1$ is a sheaf since it is the presheaf of automorphisms of a section of a stack.  It is therefore a local problem in $A$ to demonstrate that $\sE_1(A,I)$ is quasi-coherent.

By definition, we have 
\begin{equation*}
\sE_1(A,I) = e(A,I) \fp_{\sE(A,I)} e(A,I) = e(A,I) \fp_{\Exal_Y(A,I)} \Exal_X(A,I)
\end{equation*}
where $e$ is a zero section of $\sE$.  But $e(A,I) \rightarrow \Exal_Y(A,I)$ is representable by the trivial $Y$-algebra extension of $A$ by $I$.  Therefore $\sE_1(A,I)$ consists of the $X$-algebra structures on $A + I$ lifting the trivial $Y$-algebra structure.  In geometric terms, it is the set of lifts of
\begin{equation*} \xymatrix{
S \ar[r] \ar[d] & X \ar[d] \\
S[\tI] \ar@{-->}[ur] \ar[r] & Y
} \end{equation*}
where $S = \Spec A$ and $S[\tI] \rightarrow Y$ is the map induced from the map $S \rightarrow Y$ and the retraction $S[\tI] \rightarrow S$.

To prove the proposition, we must show that $\sE_1(A, I) \tensor_A B \simeq \sE_1(B, I \tensor_A B)$ whenever $B$ is an \'etale $A$-algebra.

By base change, we can assume that $Y$ is representable by a commutative ring $C$ and therefore that $X$ is a Deligne--Mumford stack.  If $U$ is \'etale over $X$ and the $X$-algebra structure on $A$ is induced from a $U$-algebra structure on $A$, then $\Exal_X(A,I) = \Exal_U(A,I)$.  Since we are free to work locally in $A$, we can therefore replace $X$ by $U$ and assume that $X$ is representable by a $C$-algebra $D$.  Let $\varphi : D \rightarrow A$ denote the map giving the $X$-algebra structure on $A$ and let $\psi : D \rightarrow B$ be the induced map.

Now we have
\begin{equation*}
\sE_1(B, I \tensor_A B) = \Hom_D(\Omega_{D/C}, (I \tensor_A B)_{[\psi]}) .
\end{equation*}
Since $B$ is flat over $A$, we have
\begin{equation*}
\Hom_D(\Omega_{D/C}, (I \tensor_A B)_{[\psi]}) = \Hom_D(\Omega_{D/C}, I_{[\varphi]}) \tensor_A B = \sE_1(A, I) \tensor_A B
\end{equation*}
which implies the proposition.
\end{proof}

Although $\usE_1(A,I)$ is always a sheaf, $\usE_0(A,I)$ generally will not be.  However, we have
\begin{proposition}
The functor $B \mapsto \sE_0(B, I \tensor_A B)$ is a separated presheaf on the \'etale site of $A$.
\end{proposition}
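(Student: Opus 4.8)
The plan is to verify the separatedness condition directly: for every \'etale cover $\{B \to B_i\}_i$, the map $\sE_0(B, I\tensor_A B) \to \prod_i \sE_0(B_i, I\tensor_A B_i)$ should be injective. Since every \'etale cover of $\Spec B$ is refined by one whose members are affine, we may assume each $B_i$ is an \'etale $A$-algebra, so all the rings in sight are \'etale over $A$. Now fix objects $\xi$ and $\eta$ of $\sE(B, I\tensor_A B)$ whose isomorphism classes agree in $\sE_0(B_i, I\tensor_A B_i)$ for every $i$; concretely, $\xi|_{B_i}$ and $\eta|_{B_i}$ are isomorphic in $\sE(B_i, I\tensor_A B_i)$ for each $i$. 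The task is to produce an isomorphism $\xi \simeq \eta$ in $\sE(B, I\tensor_A B)$.

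Consider the presheaf $\underline{\mathrm{Isom}}(\xi,\eta)$ on $\et(B)$ whose value on an \'etale $B$-algebra $B'$ is the set of isomorphisms $\xi|_{B'} \simeq \eta|_{B'}$ in $\sE(B', I\tensor_A B')$; it is a sheaf because $\sE$ is a stack in the \'etale topology. Each $\sE(B', I\tensor_A B')$ is a left exact additively cofibered category, hence an abelian $2$-group, so whenever $\underline{\mathrm{Isom}}(\xi,\eta)(B')$ is nonempty it is a torsor under the automorphism group $\mathrm{Aut}_{\sE(B', I\tensor_A B')}(\eta|_{B'})$, and the abelian $2$-group structure identifies this group canonically, and compatibly with restriction, with the automorphism group of the zero object $e$, namely $\sE_1(B', I\tensor_A B')$. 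Therefore $\underline{\mathrm{Isom}}(\xi,\eta)$ is a pseudo-torsor under the sheaf $\usE_1(B, I\tensor_A B)$; since it has a section over each $B_i$ and the $B_i$ cover $B$, it is in fact a torsor, whose isomorphism class lies in $H^1(\et(B), \usE_1(B, I\tensor_A B))$. By construction $\xi$ is isomorphic to $\eta$ over $B$ exactly when this class vanishes.

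To conclude I would invoke Proposition~\ref{prop:qcoh}: the sheaf $\usE_1(B, I\tensor_A B)$ is quasi-coherent on $\et(B)$. Since $\Spec B$ is affine, the higher \'etale cohomology of a quasi-coherent sheaf on $\et(B)$ vanishes --- \'etale cohomology of a quasi-coherent sheaf agrees with its Zariski cohomology, which is trivial in positive degrees on an affine scheme. In particular $H^1(\et(B), \usE_1(B, I\tensor_A B)) = 0$, so the torsor $\underline{\mathrm{Isom}}(\xi,\eta)$ is trivial and admits a global section; this section is an isomorphism $\xi \simeq \eta$. Hence the displayed map on $\sE_0$ is injective, which is exactly the assertion that $B \mapsto \sE_0(B, I\tensor_A B)$ is a separated presheaf.

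The essential input, and the only real obstacle, is the quasi-coherence of $\usE_1$ furnished by Proposition~\ref{prop:qcoh}: without it one knows merely that $\usE_1$ is a sheaf of abelian groups, whose first cohomology on an affine \'etale site need not vanish (e.g.\ for a locally constant sheaf), so the $\underline{\mathrm{Isom}}$-torsor could be nontrivial and $\sE_0$ would fail to be separated. Everything else --- the reduction to affine covers, the identification of the band of $\underline{\mathrm{Isom}}(\xi,\eta)$ with $\usE_1$ via the abelian $2$-group structure, and the vanishing of cohomology of quasi-coherent sheaves on affines --- is routine.
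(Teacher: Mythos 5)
Your argument is correct and is essentially the paper's own: both reduce separatedness to showing that two sections that become isomorphic locally are isomorphic globally, observe that the Isom-object is a torsor under the sheaf $\usE_1$, and then invoke Proposition~\ref{prop:qcoh} together with the vanishing of higher cohomology of quasi-coherent sheaves on an affine \'etale site to trivialize that torsor. The only cosmetic difference is that the paper uses the group structure on $\sE_0$ to reduce at once to the case of an object locally isomorphic to zero, whereas you compare two arbitrary objects and identify the band of the Isom-torsor with $\usE_1$ via the abelian $2$-group structure.
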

\begin{proof}
In effect, we must show that if $\omega$ is an object of $\sE(A, I)$ that is locally isomorphic to zero then it is globally isomorphic to zero.  But if $\omega$ is locally isomorphic to zero, the isomorphisms between $\omega$ and zero form a torsor on the \'etale site of $A$ under the sheaf of abelian groups $\usE_1(A,I)$.  By Proposition~\ref{prop:qcoh}, $\usE_1(A,I)$ is quasi-coherent and therefore a torsor under $\usE_1(A,I)$ on $\et(A)$ admits a global section.
\end{proof}

\begin{corollary}
\begin{enumerate}[label=(\roman{*})]
\item The map from $\usE_0(A,I)$ to its associated sheaf is injective.
\item If $\omega \in \usE_0(A,I)$ is an obstruction to a lifting problem then its image in the associated sheaf of $\usE_0(A,I)$ is also an obstruction to the same lifting problem.
\end{enumerate}
\end{corollary}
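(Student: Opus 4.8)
The plan is to read off both assertions from the preceding proposition, which says that the presheaf $\usE_0(A,I)$, namely $B \mapsto \sE_0(B, I \tensor_A B)$, is separated on $\et(A)$.

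For~(i) I would invoke the general fact that the unit map $F \to F^{\#}$ from a presheaf $F$ to its associated sheaf is a monomorphism exactly when $F$ is separated: if a section $s$ of $F$ over an \'etale $A$-algebra $B$ has trivial image in $F^{\#}$, then $s$ restricts to $0$ on the members of some \'etale covering of $B$, and separatedness of $F$ then forces $s = 0$ already. Applied to the separated presheaf $\usE_0(A,I)$ furnished by the preceding proposition, this gives~(i) at once.

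For~(ii) I expect the statement to be a purely formal consequence of~(i) together with the description of obstructions given at the start of Section~\ref{sec:obgrp}. Write $\omega^{\#}$ for the image of $\omega$ under the unit map $\usE_0(A,I) \to \usE_0(A,I)^{\#}$. By~(i) this map is injective, so $\omega^{\#} = 0$ if and only if $\omega = 0$; and, by the discussion opening Section~\ref{sec:obgrp}, the class $\omega$ (equivalently $\oomega$) vanishes if and only if the lifting problem to which it is associated --- extending the given $X$-algebra structure on $A$ over the square-zero $Y$-algebra extension underlying $\omega$ --- admits a solution. Chaining the two equivalences shows that $\omega^{\#}$ vanishes precisely when a lift exists, which is exactly the assertion that $\omega^{\#}$ is an obstruction to the same lifting problem.

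I do not anticipate a genuine obstacle here: all the substance is contained in the preceding proposition, which itself rests on the quasi-coherence of $\usE_1(A,I)$ established in Proposition~\ref{prop:qcoh} (a torsor under a quasi-coherent sheaf on $\et(A)$ has a global section, so a class locally isomorphic to zero is globally isomorphic to zero). The one point worth making explicit is that ``being an obstruction to a lifting problem'' is used here to mean precisely ``vanishing is equivalent to solvability of the lift,'' so that an injective map of abelian groups transports the property from $\omega$ to $\omega^{\#}$ automatically.
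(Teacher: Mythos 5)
Your proposal is correct and matches the paper's intent: the corollary is stated there without proof as an immediate consequence of the preceding separatedness proposition, and your deduction (separated presheaf $\Rightarrow$ injective unit map, then transport of the vanishing criterion along an injection) is exactly the intended argument.
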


\subsection{Li--Tian obstruction theories}

Let $M_A$ denote the $A$-module structure on $A$ itself.  For each $A$-module $I$, there is a natural map of presheaves of $\tA$-modules on $\et(A)$,
\begin{equation} \label{eqn:17}
\usE_0(A, M_A) \tensor_A I \rightarrow \usE_0(A, I) ,
\end{equation}
induced by the $A$-linear map $A \rightarrow I$ associated to each element of $I$.  The linearity with respect to $I$ follows from the additivity of $\sE_0(A,-)$.  The functor $I \mapsto \sE_0(A,M_A) \tensor_A I$ is right exact, so if~\eqref{eqn:17} is an isomorphism for all $I$ then $\sE_0(A, -)$ is right exact.  
\begin{proposition}
If the maps~\eqref{eqn:17} are isomorphisms for all $I$ then $\sE$ is an \emph{exact} additively cofibered category.  If $\sE$ is exact and locally of finite presentation then the maps~\eqref{eqn:17} are isomorphisms.
\end{proposition}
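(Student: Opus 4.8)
The statement is two implications of quite different weight. \emph{The forward implication} is formal: if the maps~\eqref{eqn:17} are isomorphisms for every $I$, then $\sE_0(A,-)$ is naturally isomorphic to the right-exact functor $\sE_0(A,M_A)\tensor_A(-)$, so for a short exact sequence $0\to I\to J\to K\to0$ the map $\sE_0(A,J)\to\sE_0(A,K)$ is surjective, hence $\sE(A,J)\to\sE(A,K)$ is essentially --- in particular locally essentially --- surjective; since $\sE$ is left exact by virtue of being an obstruction theory, it is exact. No finiteness or quasi-coherence is needed here.

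\emph{For the converse}, assume $\sE$ exact and locally of finite presentation. The plan is to reduce everything to the single assertion that $\sE_0(A,-)$ is \emph{right exact}. Granting that, both $\sE_0(A,M_A)\tensor_A(-)$ and $\sE_0(A,-)$ are additive right-exact functors on $A\hMod$ that commute with filtered colimits --- the latter because $\sE$ is locally of finite presentation --- and \eqref{eqn:17} is the identity when $I=A$; so it would be an isomorphism for every finite free module by additivity, for every free module by passing to filtered colimits, and finally for every $I$ by comparing the cokernels of the two right-exact sequences attached to a presentation $A^{(S)}\to A^{(T)}\to I\to0$.

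To prove $\sE_0(A,-)$ right exact, I would fix $0\to I\to J\to K\to0$ and $\xi\in\sE(A,K)$ and show $\xi$ lifts to $\sE(A,J)$. Left exactness of $\sE$ makes $0\to\sE(A,I)\to\sE(A,J)\to\sE(A,K)$ exact, so the fibered category of lifts of $\xi$ over $\et(A)$ is a torsor under the Picard stack $\usE(A,I)$, and exactness of $\sE$ is exactly the statement that this torsor is locally non-empty. Its isomorphism class lies in the first hypercohomology over $\et(A)$ of the two-term complex of sheaves $[\usE_1(A,I)\to\usE_0(A,I)^{+}]$, in degrees $[-1,0]$, that classifies $\usE(A,I)$, where $(-)^{+}$ denotes \'etale sheafification; because $\usE_1(A,I)$ is quasi-coherent (Proposition~\ref{prop:qcoh}) and $\Spec A$ is affine, the positive cohomology of $\usE_1(A,I)$ vanishes and the obstruction reduces to a class in $H^{1}(\et(A),\usE_0(A,I)^{+})$. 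So it would suffice to show that $\usE_0(A,I)^{+}$ is quasi-coherent.

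\emph{The main obstacle} is precisely this quasi-coherence, which looks circular: the $H^{1}$-vanishing would follow from the right exactness we are after. I would break the circle as follows. The functor $\usE_0(A,-)^{+}$ is right exact --- a locally essentially surjective morphism of presheaves sheafifies to an epimorphism --- additive, and compatible with filtered colimits, so the reduction in the second paragraph gives $\usE_0(A,I)^{+}\cong\usE_0(A,M_A)^{+}\tensor_A I$, and it is enough to treat $I=M_A$. Quasi-coherence of $\usE_1(A,M_A)$ forces $\usE_0(A,M_A)$ to be a genuine sheaf --- the $H^{2}$-obstruction to gluing local objects with matching isomorphism classes is valued in $\usE_1(A,M_A)$, which is acyclic over the affine $\Spec A$ --- so $\usE_0(A,M_A)^{+}=\usE_0(A,M_A)$ and its quasi-coherence amounts to the identity $\sE_0(A,M_A)\tensor_A B\xrightarrow{\sim}\sE_0(B,M_B)$ for \'etale $B/A$. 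Finally, \eqref{eqn:17} is visibly an isomorphism for finite free modules, both of its sides commute with filtered colimits, and by Lazard's theorem the flat $A$-module underlying $B$ is a filtered colimit of finite free modules; hence \eqref{eqn:17} is an isomorphism for every flat $I$, which yields exactly the identity needed and completes the argument. I expect the one genuinely delicate point to be keeping the colimit, sheafification and Lazard reductions mutually compatible and correctly invoking the classification of torsors in the $2$-category of Picard stacks; the remainder is formal manipulation of additively cofibered categories together with routine cokernel bookkeeping.
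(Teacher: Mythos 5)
Your forward direction, and the ``granting right exactness'' half of your converse, coincide with the paper's proof: the paper likewise checks~\eqref{eqn:17} on free modules using additivity together with local finite presentation, and then compares the two right-exact rows attached to a short exact sequence with free middle term (phrased there as a five-lemma chase rather than a cokernel comparison, but it is the same argument). Where you genuinely diverge is at the first step of the converse. The paper writes ``suppose that $\sE$ is exact, so $\sE_0$ is right exact'' and moves on; you observe, correctly, that exactness of $\sE$ only provides \emph{local} essential surjectivity of $\sE(A,J) \rightarrow \sE(A,K)$, whereas the diagram chase needs surjectivity of $\sE_0(A,J) \rightarrow \sE_0(A,K)$ on global sections over the affine $\Spec A$. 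Your descent argument filling this in --- the lifts of $\xi$ form a torsor under the Picard stack $\usE(A,I)$; after the contribution of the quasi-coherent $\usE_1(A,I)$ is killed by affineness, triviality of the torsor is governed by $H^1$ of the sheaf of isomorphism classes, which you show is quasi-coherent by reducing to $I = M_A$ and invoking Lazard, additivity, local finite presentation and the cartesian property --- is sound, and it buys a complete proof of a point the paper elides. The costs are that you must invoke Proposition~\ref{prop:qcoh}, hence implicitly the Deligne--Mumford-type hypothesis, which the paper's own proof never uses, and considerable length. Two imprecisions, neither fatal: the complex classifying the Picard stack $\usE(A,I)$ is not literally $[\usE_1(A,I) \rightarrow \usE_0(A,I)^+]$ --- those are its cohomology sheaves --- but the truncation triangle yields exactly the exact sequence $H^2(\usE_1) \rightarrow H^1(\text{complex}) \rightarrow H^1(\usE_0^+)$ that you use; and the $H^2$-gluing step showing $\usE_0(A,M_A)$ is a sheaf is redundant, since your Lazard computation already identifies the presheaf $\usE_0(A,M_A)$ with the quasi-coherent sheaf associated to the module $\sE_0(A,M_A)$, which is in particular a sheaf.
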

\begin{proof}
If $J \rightarrow K$ is a surjection of $A$-modules, then right exactness of $\usE_0(A,-)$ implies that $\usE(A,J) \rightarrow \usE(A,K)$ is locally surjective on isomorphism classes, which is what it means to be right exact.  This proves the first claim.

Conversely, suppose that $\sE$ is exact, so $\sE_0$ is right exact.  Then for any exact sequence
\begin{equation} \label{eqn:18} 
0 \rightarrow I \rightarrow J \rightarrow K \rightarrow 0
\end{equation}
of $A$-modules, we have a commutative diagram
\begin{equation*} \xymatrix@R=12pt{
\usE_0(A,M_A) \tensor_A I \ar[r] \ar[d] & \usE_0(A,M_A) \tensor_A J \ar[r] \ar[d] & \usE_0(A,M_A) \tensor_A K \ar[r] \ar[d] & 0 \\
\usE_0(A,I) \ar[r] & \usE_0(A,J) \ar[r] & \usE_0(A,K) \ar[r] & 0 
} \end{equation*}
with exact rows.  By additivity and local finite presentation, the map $\sE_0(A,M_A) \tensor_A J \rightarrow \sE_0(A,J)$ is an isomorphism if $J$ is a free $A$-module.  If $K$ is given, we may choose an exact sequence~\eqref{eqn:18} in which $J$ is free.  Then the commutative diagram above implies the surjectivity of $\sE_0(A,M_A) \tensor_A K \rightarrow \sE_0(A,K)$.  This applied with $K$ replaced by $I$ gives the surjectivity of the leftmost vertical arrow, which is enough to imply that the map $\sE_0(A,M_A) \tensor_A K \rightarrow \sE_0(A,K)$ is a bijection, by the $5$-lemma.
\end{proof}

Let $\usE_0(A,I)^+$ denote the associated sheaf of $\usE_0(A,I)$ on $\et(A)$.

\begin{proposition}
If $\sE$ is an exact obstruction theory for $X$ over $Y$ then it induces an obstruction theory for $X$ over $Y$ in the sense of \cite[Definition~1.2]{LT}, such that sheaf of obstruction groups is $\usE_1(A,I)^+$.
\end{proposition}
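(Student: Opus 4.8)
The plan is to read off from $\sE$ the deformation-theoretic ingredients of a Li--Tian obstruction theory and to check them against the axioms of \cite[Definition~1.2]{LT}. Such a theory must supply, functorially in the affine $Y$-scheme $\Spec A$ equipped with a map to $X$ and functorially in the ideal, a quasi-coherent sheaf of obstruction groups on $\et(A)$, together with, for each square-zero extension $\Spec A \hookrightarrow \Spec A'$ over $Y$ with ideal $\tI$, an obstruction section whose vanishing is equivalent to the solvability of the lifting problem~\eqref{eqn:4}; and it must be compatible with base change along morphisms of $X$-algebras and with short exact sequences of modules. All of this is already available from Sections~\ref{sec:global} and~\ref{sec:obgrp}; the content of the proof is in packaging it correctly and in controlling the passage to associated sheaves.

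First I would take the sheaf of obstruction groups to be $\usE_1(A,I)^{+}$. By Proposition~\ref{prop:qcoh} the presheaf $\usE_1(A,I)$ is already a quasi-coherent sheaf on $\et(A)$, so sheafification changes nothing and $\usE_1(A,I)^{+} = \usE_1(A,I)$; it is functorial in $(A,I)$ because it is assembled from the automorphism groups of the zero object of the cofibered category $\sE$, and its behaviour under base change along $\varphi : A \rightarrow B$ is governed by Proposition~\ref{prop:cart}~\ref{cart:3}, together with the computation in the proof of Proposition~\ref{prop:qcoh} showing $\usE_1(A,I) \tensor_A B = \usE_1(B, I \tensor_A B)$ when $B$ is flat over $A$. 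This is precisely the base-change compatibility demanded by \cite[Definition~1.2]{LT}.

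Next I would produce the obstruction sections. For the lifting problem~\eqref{eqn:4} with $S = \Spec A$, the object $\xi \in h^\ast \Exal_Y(A,I)$ carries an obstruction $\omega = \omega(\xi) \in \sE(A,I)$ whose class $\oomega$ lies a priori only in the \emph{presheaf} $\usE_0(A,I)$; by the separatedness corollary of Section~\ref{sec:obgrp}, the image of $\oomega$ in the associated sheaf remains an obstruction to the same lifting problem, and when it vanishes the set of lifts is a torsor under $\usE_1(A,I)$ by the torsor description in Section~\ref{sec:global}. Functoriality of $\omega$ in $A$, in the module, and with respect to morphisms of square-zero extensions follows from the requirement in Definition~\ref{def:obs} that the horizontal arrows of the cartesian square~\eqref{eqn:6} preserve arrows cocartesian over \'etale maps in $\ComRngMod$ and arrows cartesian over $\ComRng$. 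This is where the hypothesis that $\sE$ is \emph{exact} is used: exactness makes $\usE_0(A,-)$ right exact after passing to associated sheaves, so that for every $0 \rightarrow I \rightarrow J \rightarrow K \rightarrow 0$ the associated obstruction sequence is exact, which is the compatibility with exact sequences required by \cite[Definition~1.2]{LT}.

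The main obstacle I expect is the bookkeeping required to match our cofibered-category formalism against the exact list of functoriality and exactness axioms in \cite[Definition~1.2]{LT}, and especially to check that replacing the obstruction presheaf $\usE_0(A,I)$ by its sheafification --- necessary because, unlike $\usE_1(A,I)$, it is only a \emph{separated} presheaf --- preserves the obstruction property and all functorialities simultaneously. Here the results of Section~\ref{sec:obgrp} carry the weight: the separatedness of $B \mapsto \sE_0(B, I \tensor_A B)$ and its corollary are exactly what allow one to pass to $\usE_0(A,I)^{+}$ without loss, while Proposition~\ref{prop:qcoh} is what certifies that $\usE_1(A,I)^{+}$ is a genuine quasi-coherent sheaf of obstruction groups. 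Granting these, the remaining axioms of \cite[Definition~1.2]{LT} follow formally from $\sE$ being a cartesian, exact, left exact additively cofibered category.
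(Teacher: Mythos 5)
There is a genuine gap, and it sits at the heart of what \cite[Definition~1.2]{LT} actually demands.  First, the obstruction sheaf is misidentified: in the notation of Section~\ref{sec:obgrp}, obstructions are \emph{isomorphism classes}, i.e.\ elements of $\sE_0$, whereas $\sE_1$ is the automorphism group of the zero object, which governs the torsor of lifts once the obstruction vanishes.  Your own construction places the obstruction class in $\usE_0(A,I)^+$, so declaring the sheaf of obstruction groups to be $\usE_1(A,I)^+$ is internally inconsistent; moreover a Li--Tian obstruction sheaf must be a \emph{fixed} sheaf, independent of the ideal, with the dependence on the ideal entering only through a tensor product, so an object that varies with $I$ cannot play that role in any case.  (The statement as printed has a slip here; the paper's proof makes clear that the relevant sheaf is $\usE_0(A,M_A)^+$.)

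Second, and more seriously, the step you omit is exactly the one for which the exactness hypothesis is needed.  In the Li--Tian setup one has closed embeddings $S_0 \rightarrow S \rightarrow S'$ with $S_0 = \Spec A_0$ affine and the ideal of $S_0$ in $S$ annihilating the ideal of $S$ in $S'$, so that the latter is an $A_0$-module $I$, and the axiom requires the obstruction to lie in the fixed obstruction sheaf tensored with $I$ over $A_0$.  The obstruction produced by $\sE$ naturally lives in $\sE_0(A_0,I)$; to put it where Li and Tian require, one needs the linearity $\sE_0(A_0,I) \simeq \sE_0(A_0,M_{A_0}) \tensor_{A_0} I$, which is precisely what the proposition on the maps~\eqref{eqn:17} supplies for an exact (and locally finitely presented) obstruction theory, after which the fact that sheafification commutes with tensor product gives $\usE_0(A_0,I)^+ = \usE_0(A_0,M_{A_0})^+ \tensor_{A_0} I$ and hence an obstruction class of the required form.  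Your appeal to exactness---to make $\usE_0(A,-)$ right exact and deduce ``compatibility with exact sequences''---does not produce this tensor decomposition, and without it the data you assemble is not an obstruction theory in the sense of \cite[Definition~1.2]{LT}.  The separatedness of $\usE_0$ and the quasi-coherence of $\usE_1$ that you invoke are correct and do appear in the paper, but they are auxiliary to this missing step.
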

\begin{proof} 
We recall the situation of \cite[Definition~1.2]{LT} (but change some of the letters).  Suppose that $S_0 \rightarrow S \rightarrow S'$ is a sequence of closed embeddings of schemes with $S_0 = \Spec A_0$ affine.  Assume that the ideal of $S_0$ in $S$ annihilates the ideal of $S$ in $S'$ and write $I$ for the $A_0$-module corresponding to the ideal of $S$ in $S'$.  Given a lifting problem
\begin{equation*} \xymatrix{
S \ar[r] \ar[d] & X \ar[d] \\
S' \ar@{-->}[ur] \ar[r] & Y
} \end{equation*}
there is an obstruction to the existence of a lift in $\sE_0(A_0, I)$.  Since $\sE_0(A_0, I) \subset \Gamma(\et(A), \usE_0(A,I)^+)$ we can also view the obstruction as living in $\usE_0(A,I)^+$.  Since $\sE$ is exact and locally of finite presentation, $\sE_0(A_0, I) = \sE_0(A_0, M_{A_0}) \tensor_{A_0} I$.  Since sheafification preserves tensor product, we also get 
\begin{equation*}
\usE_0(A_0, I)^+ = \usE_0(A_0, M_{A_0})^+ \tensor_{A_0} I .
\end{equation*}
We determine that there is an obstruction class in $\usE_0(A_0, M_{A_0})^+ \tensor_{A_0} I$, as required by~\cite[Definition~1.2]{LT}.
\end{proof}

Note that an obstruction theory that is represented by a $2$-term complex of free $A$-modules is exact.  Therefore any perfect obstruction theory, in the sense of \cite{BF} induces an obstruction theory in the sense of \cite{LT} (which is perfect in the sense of Li--Tian).  Moreover, this is the same process by which a Li--Tian obstruction theory is associated to a Behrend--Fantechi obstruction theory in \cite[Section~3]{KKP}.  Provided that the complex representing the obstruction theory is globally representable by a $2$-term complex of vector bundles (in order that the Li--Tian obstruction class be defined), \cite[Corollary~1]{KKP} implies that the virtual fundamental class defined by Li and Tian coincides with that defined by Behrend and Fantechi (as in Section~\ref{sec:vfc}).

\section{Compatibility} \label{sec:compat}

\subsection{Compatible obstruction theories for morphisms of Deligne--Mumford type} \label{sec:compat-dm}

We recall that an exact sequence of left exact, additively cofibered categories
\begin{equation*}
0 \rightarrow \sE \rightarrow \sF \rightarrow \sG
\end{equation*}
is a specified cartesian diagram
\begin{equation*} \xymatrix{
\sE \ar[r] \ar[d] & \sF \ar[d] \\
e \ar[r] &  \sG .
} \end{equation*}
We say that the sequence is right exact, and we affix an arrow $\sG \rightarrow 0$ at the end of the sequence, to mean that the map $\sF \rightarrow \sG$ is locally essentially surjective.

Let $X \xrightarrow{f} Y \xrightarrow{g} Z$ be a sequence of morphisms of homogeneous stacks.  We will specify a notion of compatibility between relative obstruction theories for $f$, $g$, and $gf$ that reduces to a familiar notion for obstruction theories in the sense of Behrend--Fantechi.  It is easiest to describe compatibility for morphisms of Deligne--Mumford type, so we do that first; then we explain how this can be generalized.

\begin{definition} \label{def:compat}
Let $X \xrightarrow{f} Y \xrightarrow{g} Z$ be a sequence of morphisms of Deligne--Mumford type.  A compatible sequence of relative obstruction theories for this sequence of maps is a commutative diagram
\begin{equation} \label{eqn:11}\xymatrix{
0 \ar[r] & \sN_{X/Y} \ar[r] \ar[d] & \sN_{X/Z} \ar[r] \ar[d] & f^\ast \sN_{Y/Z} \ar[d] \\
0 \ar[r] & \sE_{X/Y} \ar[r] & \sE_{X/Z} \ar[r] & f^\ast \sE_{Y/Z} \ar[r] & 0
} \end{equation}
in which the first row is the canonical sequence of relative instrinsic normal stacks (whose left exactness we leave to the reader),  the second row is an exact sequence of left exact additively cofibered stacks, and the map $f^\ast \sN_{Y/Z} \rightarrow f^\ast \sE_{Y/Z}$ is pulled back from an obstruction theory $\sN_{Y/Z} \rightarrow \sE_{Y/Z}$ for $Y$ over $Z$.  The map $\sN_{X/Y} \rightarrow \sE_{X/Y}$ is required to be isomorphic to the one induced from the commutative square on the right by the exactness of the two rows (provided it exists the isomorphism is unique).
\end{definition}

\begin{proposition}
Suppose we have a compatible sequence of obstruction theories as in Definition~\ref{def:compat} and the obstruction theories in question are all \emph{perfect}, hence give rise to relative Gysin pullback functors.  Then
\begin{gather*}
(gf)^! = f^! g^! \qquad \text{and} \\
f^! [Y/Z]^{\vir} = [X/Z]^{\vir} .
\end{gather*}
\end{proposition}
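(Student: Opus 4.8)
The plan is to deduce both identities from the functoriality of refined Gysin homomorphisms for a short exact sequence of vector bundle stacks, with Definition~\ref{def:compat} supplying the obstruction-theoretic data and the standard compatibility of relative intrinsic normal cones supplying the cycle-theoretic data; the displayed equality of virtual classes will then follow from $(gf)^! = f^! g^!$ by specialization. The first step is to extract a short exact sequence of vector bundle stacks on $X$ from the bottom row of Diagram~\eqref{eqn:11}. Applying the functor $\sF \mapsto \fF$ that sends a left exact additively cofibered stack over $X\hAlgMod$ to the $\cO_X$-module stack with $\fF(A) = \sF(A,A)$ (as in Section~\ref{sec:vfc}), and using that the bottom row of~\eqref{eqn:11} is exact with $\sE_{X/Z} \to f^\ast \sE_{Y/Z}$ locally essentially surjective, one obtains a sequence $0 \to \fE_{X/Y} \to \fE_{X/Z} \to f^\ast \fE_{Y/Z} \to 0$ of $\cO_X$-module stacks. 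Since all three obstruction theories are \emph{perfect} (Definition~\ref{def:lfp}), each term is a vector bundle stack, and by Propositions~\ref{prop:perf} and~\ref{prop:vb} this is a short exact sequence of vector bundle stacks, presented by a distinguished triangle of perfect complexes of amplitude $[-1,0]$ on $X$; in particular $\fE_{X/Z} \to f^\ast \fE_{Y/Z}$ is a smooth surjection with kernel $\fE_{X/Y}$.

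Next I would bring in the cones. The top row of~\eqref{eqn:11} provides compatible closed embeddings $\fC_{X/Y} \subseteq \fN_{X/Y} \subseteq \fN_{X/Z}$ together with the projection $\fN_{X/Z} \to f^\ast \fN_{Y/Z}$, all lying inside the corresponding vector bundle stacks via the embeddings of Section~\ref{sec:vfc} (which use the full faithfulness from Corollary~\ref{cor:ins}). The geometric input I would quote is the comparison of relative intrinsic normal cones under composition (\cite{BF}, in the form used in \cite{Man}): the projection $\fN_{X/Z} \to f^\ast \fN_{Y/Z}$ carries $\fC_{X/Z}$ into $f^\ast \fC_{Y/Z}$ and, after deformation to the normal cone, exhibits the cycle $[\fC_{X/Z}]$ as obtained from $[\fC_{Y/Z}]$ by $f$-specialization through cones modeled on $\fC_{X/Y}$.

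Finally I would combine the two. Recall $[X/Z]^{\vir} = 0^!_{\fE_{X/Z}}[\fC_{X/Z}]$, and that the relative Gysin functor $f^!$ is computed, as usual, by pulling back the obstruction theory, specializing a cycle to the relative intrinsic normal cone, and applying $0^!$. Given the short exact sequence of vector bundle stacks above, functoriality of refined Gysin maps along it (\cite{Man}; cf.\ the appendix) expresses $0^!_{\fE_{X/Z}}$, on cycles supported on cones compatible with the fibration $\fE_{X/Z} \to f^\ast \fE_{Y/Z}$, as the composite of the Gysin homomorphism of that fibration with $0^!_{\fE_{X/Y}}$. Feeding in the cone compatibility of the preceding paragraph, the image of $[\fC_{X/Z}]$ under this procedure is precisely $0^!_{\fE_{X/Y}}$ applied to $f^\ast$ of the cycle computing $[Y/Z]^{\vir}$, that is, $f^![Y/Z]^{\vir}$; hence $[X/Z]^{\vir} = f^![Y/Z]^{\vir}$. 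The identity $(gf)^! = f^!g^!$ is the same computation carried out over an arbitrary $Z$-scheme $T$, with a cycle $\alpha$ on $T$ in place of $[\fC_{X/Z}]$; the equality of virtual classes is then the case $T = Z$, $\alpha$ the fundamental cycle (reducing to $Z$ affine and summing over top-dimensional components if $Z$ is not equidimensional).

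The main obstacle will be the cone-theoretic step: verifying that $\fC_{X/Z}$ sits inside $\fE_{X/Z}$ compatibly with both the sub-bundle-stack $\fE_{X/Y}$ and the fibration over $f^\ast \fE_{Y/Z}$, so that the Gysin-composition lemma for short exact sequences of vector bundle stacks applies without modification. The short exact sequence of vector bundle stacks is essentially forced by Definition~\ref{def:compat}, and the Gysin functoriality for such sequences is standard (it is what the appendix supplies), so once the cone diagram is in place the remainder is formal.
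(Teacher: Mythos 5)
Your proposal is correct and follows essentially the same route as the paper: the paper's entire proof is the single sentence ``This reduces immediately to \cite[Theorem~4]{Man},'' and your argument is precisely the unwinding of that reduction --- Definition~\ref{def:compat} hands Manolache the required short exact sequence of vector bundle stacks over the triangle of intrinsic normal sheaves, and his theorem supplies the cone compatibility and Gysin functoriality that you sketch. The extra detail you provide (the deformation-to-the-normal-cone step you flag as the main obstacle) is exactly the content of the cited theorem, so deferring it to \cite{Man} is legitimate and matches what the paper does.
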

\begin{proof}
This reduces immediately to \cite[Theorem~4]{Man}.
\end{proof}

\subsection{Compatible obstruction theories in general}

Suppose that $X \xrightarrow{f} Y \xrightarrow{g} Z$ is a sequence of morphisms of homogeneous stacks.  We say that relative obstruction theories $\sE_{X/Y}$, $\sE_{X/Z}$, and $\sE_{Y/Z}$ are \emph{compatible} if we are given a commutative diagram
\begin{equation} \label{eqn:22} \xymatrix{
\Exal_X \ar[r] \ar[d] & f^\ast \Exal_Y \ar[r] \ar[d] & f^\ast g^\ast \Exal_Z \ar[d] \\
e \ar[r] & \sE_{X/Y} \ar[r] \ar[d] & \sE_{X/Z} \ar[d] \\
& e \ar[r] & f^\ast \sE_{Y/Z} .
} \end{equation}
in which
\begin{enumerate}
\item the square in the upper left is the cartesian square associated to the obstruction theory $\sE_{X/Y}$,
\item the upper large rectangle is the cartesian square associated to the obstruction theory $\sE_{X/Z}$,
\item the large rectange on the right is the cartesian square associated to the obstruction theory $\sE_{Y/Z}$, pulled back via $f^\ast$,
\item the square in the lower right is cartesian, and
\item the map $\sE_{X/Z} \rightarrow f^\ast \sE_{Y/Z}$ is locally essentially surjective.
\end{enumerate}

\begin{proposition}
Suppose that $X \rightarrow Y \rightarrow Z$ are morphisms of Deligne--Mumford type.  If $\sE_{X/Y}$, $\sE_{X/Z}$, and $\sE_{Y/Z}$ are compatible relative obstruction theories in the sense described above then they are also compatible relative obstruction theories in the sense of Section~\ref{sec:compat-dm}.
\end{proposition}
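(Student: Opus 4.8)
The plan is to recognize Diagram~\eqref{eqn:11} as a repackaging of Diagram~\eqref{eqn:22} via the equivalence of Corollary~\ref{cor:ins}. First I would note that conditions~(1)--(3) of the general notion of compatibility say exactly that the appropriate squares of Diagram~\eqref{eqn:22} exhibit $\sE_{X/Y}$, $\sE_{X/Z}$ and $\sE_{Y/Z}$ as obstruction theories for $X$ over $Y$, for $X$ over $Z$, and for $Y$ over $Z$; since all three maps are of Deligne--Mumford type, Proposition~\ref{prop:ins} then produces fully faithful functors $\sN_{X/Y} \to \sE_{X/Y}$, $\sN_{X/Z} \to \sE_{X/Z}$, $\sN_{Y/Z} \to \sE_{Y/Z}$ inducing the respective cartesian diagrams~\eqref{eqn:6}, each unique up to unique isomorphism; pulling the last one back along $f$ gives $f^\ast \sN_{Y/Z} \to f^\ast \sE_{Y/Z}$. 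These will be the vertical arrows of Diagram~\eqref{eqn:11}. For the rows I would take the bottom row of Diagram~\eqref{eqn:11} to be the maps $\sE_{X/Y} \to \sE_{X/Z} \to f^\ast\sE_{Y/Z}$ of Diagram~\eqref{eqn:22}: this is an exact sequence of left exact additively cofibered stacks because condition~(4) is precisely its left exactness (the lower-right square being cartesian) and condition~(5) is its right exactness. The top row is the canonical sequence of relative intrinsic normal stacks, whose left exactness is part of the setup of Definition~\ref{def:compat}.

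What then remains is to check that the two squares of Diagram~\eqref{eqn:11} commute, and that $\sN_{X/Y} \to \sE_{X/Y}$ is the map induced on fiber products by the commutative right-hand square. Since everything in sight is a stack over $X\hAlg$, I would argue, as in the proof of Proposition~\ref{prop:ins}, that it suffices to compare the functors in question after restriction to \'etale $X$-algebras, where each is determined by base change (Proposition~\ref{prop:cart}) from its value on an \'etale $X$-algebra $D$. Over \'etale $X$-algebras, $\sN_{X/Y}$ restricts to $f^\ast\Exal_Y$ and $\sN_{X/Z}$ to $(gf)^\ast\Exal_Z$; the canonical horizontal maps of the top row restrict to the maps $f^\ast\Exal_Y \to (gf)^\ast\Exal_Z$ and $(gf)^\ast\Exal_Z \to f^\ast\sN_{Y/Z}$ of Diagram~\eqref{eqn:22} (under the identification of $\sN_{X/Z}$ with $(gf)^\ast\Exal_Z$), and the vertical maps restrict to the obstruction maps $f^\ast\Exal_Y \to \sE_{X/Y}$ and $(gf)^\ast\Exal_Z \to \sE_{X/Z}$. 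Commutativity of the left square of Diagram~\eqref{eqn:11} then becomes an instance of the commutativity of Diagram~\eqref{eqn:22}; commutativity of the right square follows from condition~(3), that the right rectangle of Diagram~\eqref{eqn:22} is the $f$-pullback of diagram~\eqref{eqn:6} for $\sE_{Y/Z}$, together with the observation that $(gf)^\ast\Exal_Z \to f^\ast\sN_{Y/Z} \to f^\ast\sE_{Y/Z}$ is the $f$-pullback of the canonical factorization $g^\ast\Exal_Z \to \sN_{Y/Z} \to \sE_{Y/Z}$. By the uniqueness in Proposition~\ref{prop:ins}, this local commutativity propagates to Diagram~\eqref{eqn:11}. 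For the last point, the commutative right square induces a map $\sN_{X/Z} \fp_{f^\ast\sN_{Y/Z}} e \to \sE_{X/Z} \fp_{f^\ast\sE_{Y/Z}} e$, i.e.\ a map $\sN_{X/Y} \to \sE_{X/Y}$ via the two exact rows; restricting once more to \'etale $X$-algebras one checks that its composite with the canonical $f^\ast\Exal_Y \to \sN_{X/Y}$ is the obstruction map $f^\ast\Exal_Y \to \sE_{X/Y}$, so it induces diagram~\eqref{eqn:6} for $\sE_{X/Y}$ and is hence isomorphic to the chosen $\sN_{X/Y} \to \sE_{X/Y}$ by the uniqueness clause of Proposition~\ref{prop:ins}.

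The hard part will be the compatibility bookkeeping in the second paragraph: carefully matching the canonical maps among relative intrinsic normal stacks with the maps of Diagram~\eqref{eqn:22} after passing to \'etale $X$-algebras. The subtlety to watch is that $f^\ast\sN_{Y/Z}$, restricted to the \'etale site of an $X$-algebra, is \emph{not} a category of square-zero ring extensions --- an \'etale $X$-algebra need not be \'etale over $Y$ --- so this term cannot be handled symmetrically with $\sN_{X/Y}$ and $\sN_{X/Z}$ but must be traced through the universal obstruction theory $\sN_{Y/Z}$ for $Y$ over $Z$.
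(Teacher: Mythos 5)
Your proposal is correct and follows essentially the same route as the paper: both construct the maps $\sN_{X/Y}\to\sE_{X/Y}$, $\sN_{X/Z}\to\sE_{X/Z}$, $\sN_{Y/Z}\to\sE_{Y/Z}$ by restricting Diagram~\eqref{eqn:22} to the small \'etale sites (of $X$ for the first two, of $Y$ for the third), where the intrinsic normal stacks coincide with the corresponding $\Exal$ categories, and then extend back to the big site --- you do this via the uniqueness clause of Proposition~\ref{prop:ins}, the paper by restricting and pulling back directly, which amounts to the same mechanism. Your final paragraph verifying that $\sN_{X/Y}\to\sE_{X/Y}$ agrees with the map induced on fiber products by the right-hand square is a point the paper's terse proof leaves implicit, and your remark about handling $f^\ast\sN_{Y/Z}$ through $Y$'s small site rather than $X$'s matches exactly how the paper's three-row diagram is set up.
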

\begin{proof}
The commutative diagram~\eqref{eqn:22} induces a commutative diagram
\begin{equation*} \xymatrix@R=10pt{
f^\ast \Exal_Y \ar[r] \ar[d] & f^\ast g^\ast \Exal_Z \ar[r] \ar[d] & g^\ast \Exal_Z \ar[d] \\
\sE_{X/Y} \ar[r] \ar[d] & \sE_{X/Z}\ar[d]  \ar[r] & \sE_{Y/Z} \ar[d] \\
X \ar@{=}[r] & X \ar[r] & Y .
} \end{equation*}
Recall that the restriction of $f^\ast \Exal_Y$ to the small \'etale site of $X$ is the same as the restriction of $\sN_{X/Y}$ to the small \'etale site of $X$.  Likewise $f^\ast g^\ast \Exal_Z$ and $\sN_{X/Z}$ have the same restriction to the small \'etale site of $X$ and $g^\ast \Exal_Z$ has the same restriction as $\sN_{Y/Z}$ to the small \'etale site of $Y$.  Restricting the maps above to the small sites and then pulling back to the big sites, we obtain a commutative diagram
\begin{equation*} \xymatrix@R=10pt{
\sN_{X/Y} \ar[r] \ar[d] & \sN_{X/Z} \ar[r] \ar[d] & \sN_{Y/Z} \ar[d] \\
\sE_{X/Y} \ar[r] \ar[d] & \sE_{X/Z} \ar[r] \ar[d] & \sE_{Y/Z} \ar[d] \\
X \ar@{=}[r] & X \ar[r] & Y
} \end{equation*}
which induces the required diagram~\eqref{eqn:11}.
\end{proof}

\subsection{A construction for compatible obstruction theories}

Suppose that $X \xrightarrow{f} Y \xrightarrow{g} Z$ is a sequence of morphisms of homogeneous stacks and $\sE_{X/Z}$ and $\sE_{Y/Z}$ are relative obstruction theories.  We therefore have cartesian diagrams
\begin{equation*} \xymatrix{
\Exal_X \ar[r] \ar[d] & e \ar[d] \\
f^\ast g^\ast \Exal_Z \ar[r] & \sE_{X/Z}
} \qquad \xymatrix{
f^\ast \Exal_Y \ar[r] \ar[d] & e \ar[d] \\
f^\ast g^\ast \Exal_Z \ar[r] & \sE_{Y/Z} .
} \end{equation*}
Suppose that these diagrams can be fit together in a commutative diagram
\begin{equation} \label{eqn:23} \xymatrix{
f^\ast \Exal_Y \ar[r]  \ar[dr] & f^\ast g^\ast \Exal_Z \ar[r] \ar[dr] & \sE_{X/Z} \ar[d] \\
\Exal_X \ar[u] \ar[ur] \ar[r] & e \ar[r] \ar[ur] & f^\ast \sE_{Y/Z}
} \end{equation}
over $X\hAlgMod$.  Given such a diagram, define $\sE_{X/Y}$ to be the kernel of the map $\sE_{X/Z} \rightarrow f^\ast \sE_{Y/Z}$.  That is, define $\sE_{X/Y} = \sE_{X/Z} \fp_{f^\ast \sE_{Y/Z}} e$.

\begin{lemma}
If $\sE_{X/Z}$ and $\sE_{Y/Z}$ are relative obstruction theories for $X/Z$ and $Y/Z$, respectively, fitting into a commutative diagram~\eqref{eqn:23}, and $\sE_{X/Y}$ is defined as above, then $\sE_{X/Y}$ is naturally equipped with the structure of an obstruction theory for $X$ over $Y$.
\end{lemma}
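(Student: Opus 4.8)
The plan is to check the two requirements of Definition~\ref{def:obs}: that $\sE_{X/Y}$ is a cartesian stack of left exact additively cofibered categories over $X$, and that it is equipped with a cartesian diagram~\eqref{eqn:6} whose horizontal arrows preserve arrows cocartesian over \'etale maps in $\ComRngMod$ and arrows cartesian over $\ComRng$.

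First I would observe that every structural property in the first requirement is inherited through the $2$-fibre product $\sE_{X/Y}(A,I) = \sE_{X/Z}(A,I) \fp_{f^\ast\sE_{Y/Z}(A,I)} e(A,I)$. A $2$-fibre product of cofibered (resp.\ additively cofibered, left exact, cartesian) categories over a category with the same property again has that property, since these structures are cut out by limit conditions preserved by $2$-fibre products; the left exact additively cofibered bookkeeping is exactly that of~\cite{ccct}. A $2$-fibre product of \'etale stacks is an \'etale stack. Cartesianness over $X\hAlg$ I would verify via Proposition~\ref{prop:cart}~\ref{cart:3}: for an $X$-algebra map $f\colon A \to B$ and a $B$-module $J$, the functor $\sE_{X/Y}(A, J_{[f]}) \to \sE_{X/Y}(B,J)$ is the $2$-fibre product of the corresponding functors for $\sE_{X/Z}$, $f^\ast\sE_{Y/Z}$, and $e$, each of which is an equivalence.

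Next I would construct the diagram~\eqref{eqn:6} for $\sE_{X/Y}$. Diagram~\eqref{eqn:23} supplies a map $a\colon f^\ast\Exal_Y \to \sE_{X/Z}$, namely the composite $f^\ast\Exal_Y \to f^\ast g^\ast\Exal_Z \to \sE_{X/Z}$ along its top row, and the commutativity of~\eqref{eqn:23} together with the cartesian square of the obstruction theory $\sE_{Y/Z}$ (pulled back along $f^\ast$) exhibits the composite $f^\ast\Exal_Y \xrightarrow{a} \sE_{X/Z} \to f^\ast\sE_{Y/Z}$ as canonically trivial; the pair consisting of $a$ and this trivialization defines the bottom horizontal arrow $\phi\colon f^\ast\Exal_Y \to \sE_{X/Y}$. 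Together with the canonical $\Exal_X \to e$ and the map $\Exal_X \to f^\ast\Exal_Y$ already present in~\eqref{eqn:23}, this gives the square~\eqref{eqn:6}; its commutativity follows from the universal property of $\sE_{X/Y}$ as a fibre product, since the two composites $\Exal_X \rightrightarrows \sE_{X/Y}$ have the same image in $\sE_{X/Z}$ (using the cartesian square of $\sE_{X/Z}$ and the triangle $\Exal_X \to f^\ast\Exal_Y \to f^\ast g^\ast\Exal_Z$ in~\eqref{eqn:23}) and the same image in $e$. The preservation properties of the horizontal arrows then reduce to known facts: $\Exal_X \to e$ preserves everything trivially, while $\phi$ reduces to the map $a$, which factors as $f^\ast\Exal_Y \to f^\ast g^\ast\Exal_Z \to \sE_{X/Z}$ with the first map a pullback of the canonical map $\Exal_Y \to g^\ast\Exal_Z$ (preservation by the constructions of Section~\ref{sec:sq-zero} and Proposition~\ref{prop:def-cof}) and the second map the bottom horizontal arrow of the cartesian square of the obstruction theory $\sE_{X/Z}$ (preservation by Definition~\ref{def:obs}).

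The real work, and the step I expect to be the main obstacle, is showing that the square~\eqref{eqn:6} for $\sE_{X/Y}$ is cartesian, i.e.\ that the comparison functor $\Exal_X \to f^\ast\Exal_Y \fp_{\sE_{X/Y}} e$ is an equivalence. This cannot be deduced formally from the cartesian squares of $\sE_{X/Z}$ and $\sE_{Y/Z}$ alone; it is exactly where the extra data in~\eqref{eqn:23} is used. I would argue fibrewise: fix $\xi \in f^\ast\Exal_Y(A,I)$, corresponding to a square-zero $Y$-algebra extension $A'$ of the $Y$-algebra underlying $A$. Unwinding the definition $\sE_{X/Y} = \sE_{X/Z} \fp_{f^\ast\sE_{Y/Z}} e$, the isomorphisms $\phi(\xi) \simeq e$ in $\sE_{X/Y}(A,I)$ are the isomorphisms $a(\xi) \simeq e$ in $\sE_{X/Z}(A,I)$ whose image in $f^\ast\sE_{Y/Z}(A,I)$ is the trivialization attached to $\xi$ by the obstruction theory $\sE_{Y/Z}$. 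The cartesian square of $\sE_{X/Z}$ identifies isomorphisms $a(\xi) \simeq e$ in $\sE_{X/Z}$ with $X$-algebra structures on $A'$ lifting both the $X$-algebra structure on $A$ and the $Z$-algebra extension underlying $\xi$; and the cartesian square of $\sE_{Y/Z}$ identifies the image of such an isomorphism in $f^\ast\sE_{Y/Z}$ with the $Y$-algebra extension of $A$ that the corresponding $X$-structure on $A'$ induces. Hence the matching condition picks out exactly the $X$-structures on $A'$ inducing $\xi$ as a $Y$-extension, which is precisely the fibre of $\Exal_X \to f^\ast\Exal_Y$ over $\xi$; the statement for morphisms is a diagram chase using the exact sequence~\cite[(2.5.2)]{ccct}. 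The point that makes these two identifications compatible --- that ``the image in $f^\ast\sE_{Y/Z}$ of the $\sE_{X/Z}$-trivialization of an $X$-structure on $A'$'' coincides with ``the $\sE_{Y/Z}$-trivialization of the $Y$-structure it induces'' --- is precisely the commutativity of~\eqref{eqn:23}, i.e.\ the compatibility of the map $f^\ast g^\ast\Exal_Z \to \sE_{X/Z}$ with the structure map $f^\ast g^\ast\Exal_Z \to f^\ast\sE_{Y/Z}$ of the obstruction theory $\sE_{Y/Z}$; this, and not any of the routine checks, is where hypothesis~\eqref{eqn:23} does its work.
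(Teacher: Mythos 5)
Your proposal is correct, and the construction of the comparison map $f^\ast\Exal_Y \to \sE_{X/Y}$ (via the trivialization of the composite to $f^\ast\sE_{Y/Z}$ supplied by~\eqref{eqn:23}) is exactly the paper's. Where you diverge is in the verification that the resulting square~\eqref{eqn:6} is cartesian. The paper assembles the commutative diagram~\eqref{eqn:22} and then argues purely formally: the lower right square is cartesian by the definition of $\sE_{X/Y}$ as a fiber product, the large right rectangle is cartesian because it is the (pulled back) obstruction-theory square for $\sE_{Y/Z}$, hence the upper right square is cartesian by cancellation for pasted cartesian squares; the large upper rectangle is the obstruction-theory square for $\sE_{X/Z}$, hence the upper left square is cartesian. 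Your remark that cartesianness ``cannot be deduced formally'' is therefore slightly misleading --- you are right that the compatibility data~\eqref{eqn:23} is indispensable, but its only role is to produce the commutative diagram~\eqref{eqn:22}, after which the pasting lemma does all the work with no unwinding of square-zero extensions and no separate morphism-level diagram chase (which your fibrewise argument has to defer to an appeal to \cite[(2.5.2)]{ccct}). Your concrete identification of the fiber over $\xi$ with the $X$-algebra structures on $A'$ inducing $\xi$ is a valid and more explicit substitute, and it has the virtue of making visible exactly which piece of~\eqref{eqn:23} is being used; the paper's route is shorter and handles objects and morphisms uniformly.
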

\begin{proof}
The functoriality properties of $\sE_{X/Y}$ are all deduced from those of $\sE_{X/Z}$ and $f^\ast \sE_{Y/Z}$.  The only thing to verify is that $\sE_{X/Y}$ comes equipped with a diagram~\eqref{eqn:6}.

Following the upper horizontal arrow across~\eqref{eqn:23}, we get a map $f^\ast \Exal_Y \rightarrow \sE_{X/Z}$ and an isomorphism between the induced map $f^\ast \Exal_Y \rightarrow f^\ast \sE_{Y/Z}$ and the zero map.  It therefore induces (uniquely up to unique isomorphism) a map $f^\ast \Exal_Y \rightarrow \sE_{X/Y}$.  We therefore obtain a commutative diagram~\eqref{eqn:22}.  The lower right square is cartesian by definition and the large rectangle on the right is cartesian by hypothesis; therefore the upper right square is cartesian.  The large upper rectangle is also cartesian by hypothesis, so the square in the upper left is cartesian as well, as was required.
\end{proof}


\section{Examples} \label{sec:ex}

\subsection{Smooth morphisms}

Recall the definition of $\sT_{X/Y}$ from Section~\ref{sec:tang}.  We write $B  \sT_{X/Y}$ for the additively cofibered category of $\sT_{X/Y}$-torsors:  an object of $B \sT_{X/Y}(A,I)$ is a torsor on $\et(A)$ under the sheaf of groups $\usT_{X/Y}(A,I)$.  

\begin{proposition} \label{prop:smooth}
Suppose that $X$ and $Y$ are algebraic stacks and $X \rightarrow Y$ is a smooth morphism of Deligne--Mumford type.  Then $\sN_{X/Y} \simeq B \sT_{X/Y}$.
\end{proposition}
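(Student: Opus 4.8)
The plan is to reduce the statement, via the cotangent complex, to the elementary fact that an extension of a locally free sheaf carries the same information as a torsor under the associated sheaf of homomorphisms.

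First I would invoke the identification recorded in Section~\ref{sec:cc}: since $Y$ is algebraic and $X \to Y$ is of Deligne--Mumford type, there is a relative cotangent complex $\bL_{X/Y}$ and $\sN_{X/Y}(C,J) \simeq \Ext(f^\ast \bL_{X/Y}, J)$, where $f : \et(C) \to \et(X)$ is the morphism of \'etale sites attached to the $X$-algebra structure on $C$. Because $X \to Y$ is smooth, $\bL_{X/Y}$ is represented by the locally free sheaf $\Omega_{X/Y}$ placed in degree $0$; hence $\sN_{X/Y} \simeq \Psi_{\Omega_{X/Y}[0]}$, so that $\sN_{X/Y}(C,J)$ is the category of short exact sequences $0 \to \tJ \to \mathcal{X} \to f^\ast \Omega_{X/Y} \to 0$ of sheaves of $\tC$-modules on $\et(C)$, with $f^\ast\Omega_{X/Y}$ viewed as a $\tC$-module. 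Dually, for $X\to Y$ smooth the $2$-group $\sT_{X/Y}(C,J)$ has no higher homotopy and is equivalent to the discrete, quasi-coherent sheaf of abelian groups $\usT_{X/Y}(C,J) = \underline{\Hom}_{\tC}(f^\ast \Omega_{X/Y}, \tJ)$ (a $\tJ$-valued relative derivation factors through $f^\ast\Omega_{X/Y}$). Consequently $B\sT_{X/Y}(C,J)$ is simply the category of torsors on $\et(C)$ under $\underline{\Hom}_{\tC}(f^\ast \Omega_{X/Y}, \tJ)$.

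It therefore suffices to produce, for a locally free sheaf $\mathcal{F}$ and an arbitrary sheaf of modules $\mathcal{G}$, an equivalence between $\Psi_{\mathcal{F}[0]}(\mathcal{G})$ --- the category of extensions $0 \to \mathcal{G} \to \mathcal{X} \to \mathcal{F} \to 0$ --- and the category of torsors under $\underline{\Hom}(\mathcal{F},\mathcal{G})$, natural in $\mathcal{G}$, functorial in $\mathcal{F}$, and compatible with base change (applied with $\mathcal{F} = f^\ast\Omega_{X/Y}$, $\mathcal{G} = \tJ$). In one direction, send an extension $\mathcal{X}$ to its sheaf of local splittings $\mathcal{F} \to \mathcal{X}$: the difference of two splittings lies in $\underline{\Hom}(\mathcal{F},\mathcal{G})$, so this is a pseudo-torsor, and it is a genuine torsor because $\mathcal{F}$ is locally free, so the extension is \'etale-locally split (equivalently, its class in $H^1(\underline{\Hom}(\mathcal{F},\mathcal{G})) = \Ext^1(\mathcal{F},\mathcal{G})$ vanishes locally). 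In the other direction, form the contracted product of a torsor with the trivial extension $\mathcal{F} \oplus \mathcal{G}$, on which $\underline{\Hom}(\mathcal{F},\mathcal{G})$ acts by automorphisms fixing $\mathcal{G}$ and inducing the identity on $\mathcal{F}$. These are mutually inverse; the construction is plainly additive in $\mathcal{G}$, left exact, functorial in $\mathcal{F}$, and stable under base change along homomorphisms of $X$-algebras, so it globalizes to an equivalence of left exact additively cofibered stacks over $X$. Tracing through the identifications above, the splitting sheaf of an extension of $f^\ast\cO_X$ by $\tJ$ matches the splitting sheaf of the corresponding extension of $f^\ast\Omega_{X/Y}$ by $\tJ$, so the equivalence obtained is the natural one.

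The main obstacle is the bookkeeping in the previous paragraph: one must check that ``sheaf of local splittings'' really is an equivalence of categories --- not merely a bijection on isomorphism classes --- and that it is natural both in the base $C$ and in $\mathcal{F}$, so that it assembles into a morphism of stacks over $X$; and one must justify the \'etale-local splitting of a square-zero extension of sheaves of rings at the level of $\et(C)$, which one does, as elsewhere in the paper, by reducing to the case where $X$ is an affine scheme smooth over an affine $Y$ and invoking the infinitesimal lifting criterion. A self-contained variant, which I would be inclined to write instead, avoids $\bL_{X/Y}$ entirely: send a square-zero $f^\ast h^\ast\cO_Y$-algebra extension $\sA$ of $f^\ast\cO_X$ by $\tJ$ directly to its sheaf of $f^\ast h^\ast\cO_Y$-algebra retractions $f^\ast\cO_X \to \sA$; by the same two observations (the difference of two retractions is a derivation; a retraction exists \'etale-locally by formal smoothness) this is a torsor under $\usT_{X/Y}(C,J)$, and it is manifestly functorial, giving a morphism $\sN_{X/Y} \to B\sT_{X/Y}$ which one then checks induces isomorphisms on $\pi_0$ and on $\pi_1$, hence is an equivalence.
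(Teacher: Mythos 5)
Your closing ``self-contained variant'' is essentially the paper's own proof: the paper also sends a square-zero $Y$-algebra extension to the torsor of lifts of the algebra structure (equivalently, of retractions), uses formal smoothness for local existence and the identification of the difference of two lifts with a relative derivation, and then concludes by observing that the map is bijective on morphisms and both sides are gerbes --- which is exactly your $\pi_0$/$\pi_1$ check. The only difference is that the paper first reduces to $Y$ affine and works with \'etale $X$-algebras $A$, where $\sN_{X/Y}(A)$ literally \emph{is} $h^\ast\Exal_Y(A)$, so no sheaf-level splitting argument on $\et(C)$ is needed; this is a mild simplification you could adopt. Your first route, through $\bL_{X/Y}\simeq\Omega_{X/Y}[0]$ and the equivalence between $\Psi_{\Omega_{X/Y}}(\tJ)$ and torsors under $\uHom(f^\ast\Omega_{X/Y},\tJ)$, is also correct, but it leans on the identification $\sN_{X/Y}(C,J)\simeq\Ext(f^\ast\bL_{X/Y},J)$ from Section~\ref{sec:cc}, which the paper states without full justification and which its proof of this proposition deliberately avoids; the direct argument is therefore preferable here, and it is the one you should write.
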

\begin{proof}
It is sufficient to assume that $Y$ is affine (or at least of Deligne--Mumford type) and therefore that $X$ is a Deligne--Mumford stack.  We can construct the construct isomorphism on the small \'etale site of $X$.  Note in that case that a section of $\sN_{X/Y}(A)$, when $A$ is an \'etale $X$-algebra, is a square-zero extension $A'$ of $A$ as a $Y$-algebra.  The lifts of the $Y$-algebra structure on $A'$ to an $X$-algebra structure form a torsor on $\Spec A$ under the quasi-coherent sheaf of groups $T_{X/Y} \tensor_X \tJ$ where $J$ is the ideal of $A$ in $A'$.  But $\Gamma(A, T_{X/Y} \tensor_X \tJ) = \sT_{X/Y}(A, J)$, so this gives the desired map $\sN_{X/Y} \rightarrow B \sT_{X/Y}$.  It is well-known that this map induces a bijection on morphisms.  Since both $\sN_{X/Y}(A,J)$ and $B \sT_{X/Y}(A,J)$ are gerbes, this implies that the map is an equivalence.
\end{proof}

\subsection{Local complete intersection morphisms}

Let $X$ and $Y$ be algebraic stacks and let $X \rightarrow Y$ be a morphism of Deligne--Mumford type.  Then $\sN_{X/Y}$ is a relative obstruction theory for $X$ over $Y$, as we saw in Section~\ref{sec:ins}.

\begin{proposition}
If $X \rightarrow Y$ is a local complete intersection morphism, then $\sN_{X/Y}$ is perfect.
\end{proposition}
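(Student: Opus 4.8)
The plan is to use the identification $\sN_{X/Y}(C,J)=\Ext(f^\ast\bL_{X/Y},\tJ)$ of Section~\ref{sec:cc} together with the classical fact that the cotangent complex of a local complete intersection morphism is perfect of amplitude concentrated in $[-1,0]$. Recall that an lci morphism \'etale-locally on $X$ factors as a regular immersion $i\colon X\hookrightarrow W$ followed by a smooth morphism $W\to Y$; in particular it is locally of finite presentation, so the standing hypotheses are met, and by \cite{Ill1} the relative cotangent complex $\bL_{X/Y}$ has perfect amplitude in $[-1,0]$. After such a localization --- say $X=\Spec(B/(f_1,\dots,f_r))$ with $B$ smooth over $Y$ and $f_1,\dots,f_r$ a regular sequence --- we may choose a quasi-isomorphism $\bE_\bullet\to\bL_{X/Y}$ with $\bE_\bullet=[\mathcal{C}\to\Omega_{B/Y}\otimes_B B/(f_1,\dots,f_r)]$ a $2$-term complex of vector bundles in degrees $[-1,0]$, $\mathcal{C}$ being the conormal bundle. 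Since being locally of finite presentation and being quasi-perfect can both be checked \'etale-locally on $X$ --- using that $\sN_{X/Y}$ is a stack and that filtered colimits commute with finite limits --- we may work with such an $\bE_\bullet$, and then $\sN_{X/Y}\simeq\Psi_{\bE_\bullet}$ by the quasi-isomorphism invariance of $\Psi$ (Lemma~\ref{lem:morphs}).

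For quasi-perfection, the associated abelian cone stack $\fN_{X/Y}$ has value $\sN_{X/Y}(A,A)=\Ext(f^\ast\bL_{X/Y},\tA)$ on an $X$-algebra $A$; working locally this is $\Psi_{\bE_\bullet}(A,A)$, which the computation in the proof of Proposition~\ref{prop:vb} identifies with $([\bE_0/\bE_1])^\vee$. Since $[\bE_0/\bE_1]$ is manifestly a vector bundle stack, so is its dual by Corollary~\ref{cor:vbdual}. Hence $\fN_{X/Y}$ is \'etale-locally a vector bundle stack, and therefore a vector bundle stack.

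For local finite presentation, again work locally so that $\sN_{X/Y}\simeq\Psi_{\bE_\bullet}$ with $\bE_0,\bE_1$ vector bundles on $X$. An object of $\Psi_{\bE_\bullet}(A,I)$ is a short exact sequence $0\to I\to Z\to\bE_0\otimes_{\cO_X}\tA\to 0$ together with a lift along it of the map $\bE_1\otimes_{\cO_X}\tA\to\bE_0\otimes_{\cO_X}\tA$; since $\bE_0\otimes_{\cO_X}\tA$ and $\bE_1\otimes_{\cO_X}\tA$ are finite locally free over $A$, the groupoid of such data is assembled functorially out of $\Hom$- and $\Ext^1$-groups of finite locally free modules with values in $I$, all of which commute with filtered colimits. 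Therefore $\varinjlim_j\Psi_{\bE_\bullet}(A_j,I_j)\to\Psi_{\bE_\bullet}(A,I)$ is an equivalence for any filtered system $\{(A_j,I_j)\}$ with colimit $(A,I)$, so $\sN_{X/Y}$ is locally of finite presentation; together with the previous paragraph this shows $\sN_{X/Y}$ is perfect.

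The proof is largely an assembly of facts already in place; the point needing the most care is the reduction of both conditions to \'etale-local statements on $X$ --- immediate for quasi-perfection from the definition of a vector bundle stack, but for local finite presentation one must reconcile colimits of $X$-algebras with \'etale covers of $X$. A route avoiding the cotangent complex entirely uses the \'etale-local factorization $X\xrightarrow{i}W\to Y$ directly: Proposition~\ref{prop:smooth} gives $\sN_{W/Y}\simeq B\sT_{W/Y}$ with $\sT_{W/Y}$ a vector bundle since $W\to Y$ is smooth, the relative intrinsic normal stack of the regular immersion $i$ is the conormal bundle placed in cohomological degree $-1$, and the exact sequence $0\to\sN_{X/W}\to\sN_{X/Y}\to i^\ast\sN_{W/Y}$ then exhibits $\sN_{X/Y}$ locally as $\Psi_{\bE_\bullet}$ for the evident $2$-term complex of vector bundles $\bE_\bullet$.
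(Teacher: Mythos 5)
Your proof is correct and follows the same lines as the paper, which in fact gives two proofs: its first is exactly your main argument (the lci cotangent complex is perfect in $[-1,0]$, so $\sN_{X/Y}\simeq\Psi_{\bE_\bullet}$ locally for a two-term complex of vector bundles), stated there in one sentence, and your closing paragraph reproduces its second proof via the factorization through a regular immersion into a smooth $Y$-scheme and the exact sequence of normal stacks. Your elaboration of the local-finite-presentation check is more detail than the paper supplies, and is a welcome addition rather than a deviation.
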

\begin{proof}[First proof.]
Remark that $\sN_{X/Y}(A, J) = \Ext(\bL_{X/Y} \tensor_X A, J)$ and $\bL_{X/Y}$ is perfect in degrees $[-1,0]$.  
\end{proof}
\begin{proof}[Second proof.]
First note that $X$ is locally of finite presentation over $Y$ and $\sN_{X/Y}$ is locally of finite presentation as an obstruction theory.  We can therefore use the criterion of Proposition~\ref{prop:perf}:  it suffices to see that the abelian cone stack $\fN_{X/Y}$ is a vector bundle stack.

It is sufficient to assume that $Y$ is an affine scheme by base change, and the problem is \'etale local in $X$, so we can also assume that $X$ is affine.  We can therefore assume that $X \rightarrow Y$ factors as a closed complete intersection embedding $i : X \rightarrow Z$ followed by a smooth map $Z \rightarrow Y$.  We have an exact sequence
\begin{equation*}
0 \rightarrow \fN_{X/Z} \rightarrow \fN_{X/Y} \rightarrow i^\ast \fN_{Y/Z}
\end{equation*}
and $\fN_{X/Z}$ is a vector bundle and $i^\ast \fN_{Y/Z} = i^\ast B T_{Y/Z}$ (Proposition~\ref{prop:smooth}) is a vector bundle stack .  Therefore by \cite[Compl\'ement~I.4.11]{perf}, $\fN_{X/Y}$ is a vector bundle stack.
\end{proof}

\subsection{Stable maps} \label{sec:stable-maps}

Let $X \rightarrow V$ be a \emph{smooth}, Deligne--Mumford type morphism of algebraic stacks.  Consider the stack $\fM(X/V)$ whose $S$-points are commutative diagrams
\begin{equation} \label{eqn:9} \xymatrix{
C \ar[r] \ar[d]_\pi & X \ar[d]^q \\
S \ar[r] & V
} \end{equation}
where $C / S$ is a family of twisted Deligne--Mumford pre-stable curves.  Let $\fM$ be the stack of twisted Deligne--Mumford pre-stable curves.  There is a projection $h : \fM(X/V) \rightarrow \fM \times V$.  We describe a relative obstruction theory.  For the description of the obstruction theory itself, there is no requirement that $C$ be a pre-stable curve; that hypothesis is only necessary to demonstrate that the obstruction theory is perfect.

We will pass to the opposite category here and define the relative obstruction theory as a fibered category over the category of affine schemes over $\fM(X/V)$.  Begin by defining $\sT_{X/V}^\circ$ as in Section~\ref{sec:tang} (but passing to the opposite category as we work geometrically in this section).  If $C$ is an $X$-scheme and $J$ is a quasi-coherent sheaf on $C$ then an object of $\sT_{X/V}^\circ(C,J)$ is a completion of the diagram
\begin{equation*} \xymatrix{
C \ar[r] \ar[d] & X \ar[d] \\
C[J] \ar[r] \ar@{-->}[ur] & V
} \end{equation*}
where $C[J]$ is the trivial square-zero extension of $C$ by $J$ and the map $C[J] \rightarrow V$ is the zero tangent vector (the map factoring through the retraction of $C[J]$ onto $C$).  


Recall that $B \sT^\circ_{X/V} = \sN^\circ_{X/V}$.  Given $S$-point~\eqref{eqn:9} of $\fM(X/V)$ and a quasi-coherent sheaf $J$ on $S$, we obtain a stack $\usN^\circ_{X/V}(C,\pi^\ast J)$ by restricting $\sN^\circ_{X/V}$ to $\et(C)$.  Define $\usE^\circ(S,J) = \pi_\ast \usN^\circ_{X/V}(C, \pi^\ast J)$.

To show that $\usE^\circ(S,J)$ is the (opposite of the) restriction of an obstruction theory $\sE^\circ$ to $S$ and $J$, we must describe how its functoriality with respect to the variation of $S$ and $J$.  It must vary both covariantly and contravariantly with $S$ and covariantly with $J$ (the contravariance in $S$ and covariance in $J$ combine to $\sE$'s being a cofibered category over $\ComRngMod$ and the contravariance with $S$ corresponds to being fibered over $\ComRng$).

For the contravariance in $S$ and the covariance in $J$, consider a commutative diagram
\begin{equation} \label{eqn:28} \xymatrix{
C_S \ar@/^12pt/[rr]^{\varphi_S} \ar[r]_g \ar[d] & C_T \ar[d] \ar[r]_{\varphi_T} & X \ar[d] \\
S \ar[r]_f & T \ar[r] & V
} \end{equation}
corresponding to a morphism in $\fM(X/V)$, and a morphism of $\cO_S$-modules $f^\ast I \rightarrow J$.  We assume that $S \rightarrow T$ is affine.  Since $\sN^\circ_{X/V}$ is an obstruction theory there is a map $\sN^\circ_{X/V}(C_T, \pi_T^\ast I) \rightarrow \sN^\circ_{X/V}(C_S, \pi_S^\ast J)$.  Allowing $C_S$ and $C_T$ to vary in their \'etale sites gives a map $\usN^\circ_{X/V}(C_T, \pi_T^\ast I) \rightarrow g_\ast \usN^\circ_{X/V}(C_S, \pi_S^\ast J)$.  Then pushing forward by $\pi_T$, we get $\usE^\circ(T, I) \rightarrow f_\ast \usE^\circ(S,T)$ and passing to global sections gives the map $\sE^\circ(T,I) \rightarrow \sE^\circ(S,J)$.

For the covariance in $S$, consider again the diagram~\eqref{eqn:28}.  We only need to consider the case where $S$ and $T$ are both affine (by definition), but it is actually enough to assume only that $f$ is an affine map.  Therefore $g$ is also affine.  We have given a $V$-extension of $\varphi_S^\ast \cO_X$ by $\pi_S^\ast J$, we may push forward by $g$ to obtain a $V$-extension of $g_\ast \varphi_S^\ast \cO_X$ by $g_\ast \pi_S^\ast J = \pi_T^\ast f_\ast J$.  Pulling back via the map $\varphi_T^\ast \cO_X \rightarrow g_\ast \varphi_S^\ast \cO_X = g_\ast g^\ast \varphi_T^\ast \cO_X$, we get an extension of $\varphi_T^\ast \cO_X$ by $\pi_T^\ast f_\ast J$, which is a section of $\sE^\circ(T, f_\ast J)$, as needed.

\begin{remark}
In fact, one can demonstrate more generally that if $\sE^\circ$ is an obstruction theory then its natural extension to all schemes (Section~\ref{sec:global}) behaves covariantly with respect to affine morphisms.
\end{remark}

Finally, we have to construct the cartesian diagram~\eqref{eqn:6} (or really, given that we are working now with the opposite category, a cartesian diagram
\begin{equation*} \xymatrix{
\Def_{\fM(X/V)} \ar[r] \ar[d] & e \ar[d] \\
h^\ast \Def_{\fM \times V} \ar[r]^<>(0.5){\omega} & \sE^\circ ,
} \end{equation*}
using the notation of Section~\ref{sec:global}).  An object of $h^\ast \Def_{\fM \times V}$ corresponds to a commutative diagram of solid lines
\begin{equation} \label{eqn:29} \xymatrix{
C \ar[r] \ar[d] \ar@/^15pt/[rr]^{\varphi} & C' \ar[d] \ar@{-->}[r] & X \ar[d]^q \\
S \ar[r] & S' \ar[r] & V
} \end{equation}
in which $S'$ is a square-zero extension of $S$, and to lift it to $\Def_{\fM(X/V)}$ means to construct a dashed arrow making the diagram commutative.  This immediately translates to the lifting problem
\begin{equation} \label{eqn:30} \xymatrix{
C \ar[r] \ar[d] & X \ar[d] \\
C' \ar@{-->}[ur] \ar[r] & V ,
} \end{equation}
which we know is obstructed by the obstruction theory $\sN^\circ_{X/V}$.  This gives the map $\Def_{\fM \times V} \rightarrow \sN^\circ_{X/V}$ and it is immediate from the fact that $\sN^\circ_{X/V}$ is an obstruction theory for $X$ over $V$ that $\sE^\circ$ is an obstruction theory for $\fM(X/V)$ over $\fM \times V$.

This can be put somewhat more precisely as follows.  We obtain the map $h^\ast \uDef_{\fM \times V}(S,J) \rightarrow \usE^\circ(S,J) = \pi_\ast \usN^\circ_{X/V}(C, \pi^\ast J)$ as the composition
\begin{equation*}
h^\ast \uDef_{\fM \times V}(S,J) \rightarrow \pi_\ast q^\ast \uDef_V(C,\pi^\ast J) \rightarrow \pi_\ast \usN^\circ_{X/V} .
\end{equation*}
The first arrow sends a map $S' \rightarrow \fM \times V$ extending the one induced from $S \rightarrow \fM(X/V)$ to the diagram~\eqref{eqn:30}.  Since $\sN^\circ_{X/V}$ is an obstruction theory for $X$ over $V$, we have
\begin{equation*}
\Def_X(C, \pi^\ast J) = q^\ast \Def_V(C, \pi^\ast J) \fp_{\usN^\circ_{X/V}(C, \pi^\ast J)} e .
\end{equation*}
Pushforward is exact, so we get
\begin{equation*}
\uDef_{\fM \times V}(S,J) \fp_{\usE^\circ(S,J)} e = \uDef_{\fM \times V}(S,J) \fp_{\pi_\ast q^\ast \uDef_V(C, \pi^\ast J)} \uDef_X(C, \pi^\ast J) .
\end{equation*}
Unwound, the last term is exactly the collection of completions of Diagram~\eqref{eqn:29}.  This is exactly what we needed to show that $\sE$ is an obstruction theory.

We can also demonstrate that the obstruction theory is perfect using the criterion for perfection proved above, provided $X$ is locally of finite presentation over $V$.  In effect, it is sufficient to remark that in this case $\sE$ is locally of finite presentation, and therefore it is enough to prove that $\pi_\ast \varphi^\ast \fN_{X/V}$ ($ = \pi_\ast \usN^\circ_{X/V}(C, \cO_{C})$) is a vector bundle stack.  But now we use the facts that $C$ has relative dimension $1$ over $C$, that $\usN^\circ_{X/V}(C,\cO_C) = B \usT^\circ_{X/V}(C, \cO_C)$ is representable by $\varphi^\ast T_{X/V}[1]$, and that $T_{X/V}$ is flat over $S$ to show that $R \pi_\ast \varphi^\ast T_{X/V}[1]$ is perfect in degrees $[-1,0]$.

\appendix

\section{Biextensions}

Let $\sC$ be the abelian category of sheaves of $\cO$-modules on a site.  For any $C \in \sC$, we can define two functors $\sC \times \sC \rightarrow \Sets$:
\begin{gather*}
(A,B) \mapsto \Hom(A, \uHom(B,C)) \\
(A,B) \mapsto \Hom(A \tensor B, C) .
\end{gather*}
These functors are of course isomorphic.  If $\fF$, and $\fG$ are left exact additively fibered categories on $\sC$, define $\Hom(\fE, \uHom(\fF, \fG))$ to be the category of completions of the diagram
\begin{equation*} \xymatrix{
\fE \times \fF \ar@{-->}[r] \ar[d] & \fG \ar[d] \\
\sC \times \sC \ar[r]^{\tensor} & \sC .
} \end{equation*}
We treat $\fE \tensor \fF$ as a placeholder so that we may write $\Hom(\fE \tensor \fF, \fG)$ to mean $\Gamma(\fE, \uHom(\fF, \fG))$.  There is an equivalence $\fE \tensor \fF \simeq \fF \tensor \fE$, meaning that we have
\begin{equation*}
\Hom(\fE \tensor \fF, \fG) \simeq \Hom(\fF \tensor \fE, \fG) .
\end{equation*}

Allowing $\fE$ to vary among representable additively fibered categories over $\sC$, we obtain a fibered category $\uHom(\fF,\fG)$ with fiber $\Hom(X, \uHom(\fF, \fG))$ over $X \in \sC$.

\begin{lemma}
The fibered category $\uHom(\fF,\fG)$ over $\sC$ is left exact and additively fibered.
\end{lemma}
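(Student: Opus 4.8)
The plan is to deduce all the required properties of $\uHom(\fF,\fG)$ from the corresponding properties of $\fG$, using that the tensor product $\tensor : \sC \times \sC \to \sC$ of sheaves of $\cO$-modules is bi-additive and right exact in each variable. Recall that a section of $\uHom(\fF,\fG)$ over $X \in \sC$ is a functor $X \times \fF \to \fG$ over $\tensor$; concretely it assigns, to each object $B$ of $\fF$ lying over $V \in \sC$, an object of $\fG(X \tensor V)$, naturally in $B$ and compatibly with the cartesian arrows of $\fG$ over morphisms $Y \to X$, and the restriction functor along $g : X' \to X$ acts on this data by pullback in $\fG$ along $g \tensor \id_V$. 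That $\uHom(\fF,\fG)$ is fibered over $\sC$, with these restriction functors, is formal from the universal property of completions and is already recorded above; the content of the lemma is the additivity and left exactness.

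For additivity: a section over the zero object $0$ of $\sC$ assigns to each $B \in \fF(V)$ an object of $\fG(0 \tensor V) = \fG(0)$, which is the zero object since $\fG$ is additively fibered; the same bookkeeping for morphisms shows $\uHom(\fF,\fG)(0)$ is the trivial category $e$. For $X, Y \in \sC$ the canonical isomorphism $(X \oplus Y) \tensor V \simeq (X \tensor V) \oplus (Y \tensor V)$, together with $\fG$ being additively fibered, gives $\fG((X \oplus Y) \tensor V) \simeq \fG(X \tensor V) \times \fG(Y \tensor V)$; assembling these equivalences over all $B \in \fF(V)$ and checking naturality yields a quasi-inverse to the restriction functor $\uHom(\fF,\fG)(X \oplus Y) \to \uHom(\fF,\fG)(X) \times \uHom(\fF,\fG)(Y)$. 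Hence $\uHom(\fF,\fG)$ is additively fibered (and its fibers are then automatically abelian $2$-groups, by \cite[Section~1.4]{ccct}).

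For left exactness, one must show that for every short exact sequence $0 \to X' \to X \to X'' \to 0$ in $\sC$ the square
\begin{equation*} \xymatrix{
\uHom(\fF,\fG)(X'') \ar[r] \ar[d] & e \ar[d] \\
\uHom(\fF,\fG)(X) \ar[r] & \uHom(\fF,\fG)(X')
} \end{equation*}
is cartesian. The one point that is not purely formal is that tensoring the sequence with a fixed $V \in \sC$ gives only the right exact sequence $X' \tensor V \to X \tensor V \to X'' \tensor V \to 0$, so the left exactness of $\fG$ cannot be applied directly. To get around this, let $K$ be the image of $X' \tensor V \to X \tensor V$ and apply the left exactness of $\fG$ to the two genuine short exact sequences $0 \to K \to X \tensor V \to X'' \tensor V \to 0$ and $0 \to \ker(X' \tensor V \to K) \to X' \tensor V \to K \to 0$; combining the two resulting statements shows that $0 \to \fG(X'' \tensor V) \to \fG(X \tensor V) \to \fG(X' \tensor V)$ is exact, for every $V$ and every $B \in \fF(V)$. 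Consequently a section over $X$ equipped with a trivialization of its restriction to $X'$ determines, objectwise in $B$, a unique lift to an object of $\fG(X'' \tensor V)$; uniqueness forces these lifts to be natural in $B$ and to descend along the cartesian arrows of $\fG$, hence to assemble into a section over $X''$ inducing the given data. This is the desired cartesian square. I expect this last reduction — replacing the right exact sequence by two short exact ones and invoking the left exactness of $\fG$ twice — to be the only real obstacle; the remainder is routine manipulation of completions.
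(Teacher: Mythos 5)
Your additivity argument is fine and is essentially the one in the paper. The left‑exactness argument, however, breaks at the step where you ``combine the two resulting statements'': the objectwise sequence $0 \rightarrow \fG(X''\tensor V) \rightarrow \fG(X\tensor V) \rightarrow \fG(X'\tensor V)$ is \emph{not} exact for general $V$, and the two applications of the left exactness of $\fG$ do not concatenate. Writing $K$ for the image and $N$ for the kernel of $X'\tensor V \rightarrow X\tensor V$, left exactness gives $\fG(X''\tensor V)\simeq \fG(X\tensor V)\fp_{\fG(K)}e$ and $\fG(K)\simeq\fG(X'\tensor V)\fp_{\fG(N)}e$; but the resulting functor $\fG(K)\rightarrow\fG(X'\tensor V)$ is faithful without being full (two trivializations over $N$ differ by an automorphism of $e_N$ that need not lift), so $\fG(X\tensor V)\fp_{\fG(K)}e$ and $\fG(X\tensor V)\fp_{\fG(X'\tensor V)}e$ genuinely differ; the discrepancy is measured by $\operatorname{coker}\bigl(\fG_1(X'\tensor V)\rightarrow \fG_1(N)\bigr)$, and $N$ is a quotient of $\operatorname{Tor}_1(X'',V)$. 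Concretely, take $\sC$ to be abelian groups, $\fG(M)$ the category of extensions of $M$ by $\bZ/2$ (left exact and additively fibered), the sequence $0\rightarrow\bZ\xrightarrow{2}\bZ\rightarrow\bZ/2\rightarrow 0$, and $V=\bZ/2$. Then $X'\tensor V\rightarrow X\tensor V$ is the zero map $\bZ/2\rightarrow\bZ/2$, so $K=0$ and $\fG(X''\tensor V)\simeq\fG(\bZ/2)$ has two isomorphism classes; but $\fG(X\tensor V)\fp_{\fG(X'\tensor V)}e$ has four, because the pullback of an extension along the zero map is canonically split, its splittings form a torsor under $\Hom(\bZ/2,\bZ/2)$, and the automorphisms of an extension of $\bZ/2$ by $\bZ/2$ act trivially on those splittings.

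The paper sidesteps this by never tensoring against a non‑flat object: since flat $\cO$-modules generate $\sC$, a section of $\uHom(\fF,\fG)$ over $X$ is determined by its values on the objects of $\fF$ lying over \emph{flat} modules, so $\Hom(Y,\uHom(\fF,\fG))$ is computed as a limit over the flat objects of $\fF$ only; for flat $V$ the sequence $0\rightarrow X'\tensor V\rightarrow X\tensor V\rightarrow X''\tensor V\rightarrow 0$ stays exact and the left exactness of $\fG$ applies directly. To repair your argument you must either restrict to flat objects in this way or show that the defect above dies in the limit over $\fF$; the objectwise claim as you state it is false.
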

\begin{proof}
First we check additivity.  We have
\begin{multline*}
\Hom(0, \uHom(\fF, \fG)) = \varprojlim_{Z \in \fF} \fG(0 \tensor Z)  = 0 \\
\shoveleft \Hom(X \oplus Y, \uHom(\fF, \fG)) = \varprojlim_{Z \in \fF} \fG( (X \oplus Y) \tensor Z )  
 \\ = \varprojlim_{Z \in \fF} \fG(X \tensor Z) \times \varprojlim_{Z \in \fF} \fG(Y \tensor Z) ) \\ = \Hom(X, \uHom(\fF, \fG)) \times \Hom(Y, \uHom(\fF, \fG)) .
\end{multline*}
To demonstrate left exactness, consider an exact sequence
\begin{equation*}
0 \rightarrow W \rightarrow X \rightarrow Y \rightarrow 0
\end{equation*}
in $\sC$.  Note that flat $\cO$-modules generate $\sC$, so we have
\begin{multline*}
\Hom(Y, \uHom(\fF, \fG)) = \varprojlim_{\substack{Z \in \fF \\ \text{flat}}} \fG(Y \tensor Z)  = \varprojlim_{\substack{Z \in \fF \\ \text{flat}}} \fG( [W \rightarrow X ]  \tensor Z) \\
= \uHom( [W \rightarrow X], \uHom(\fF, \fG) ).
\end{multline*}
\end{proof}

Let $B \cO$ be the left exact, additively cofibered category of extensions by $\cO$.  Then if $\fF$ is an additively fibered category over $\sC$, we define $\fF^\vee = \uHom(\fF, B \cO)$.  We have
\begin{equation*}
\Hom(\fE, \fF^\vee) \simeq \Hom(\fE \tensor \fF, B \cO) 
\end{equation*}
and in particular
\begin{equation*}
\Hom(\fF^\vee, \fF^\vee) \simeq \Hom(\fF \tensor \fF^\vee, B \cO) \simeq \Hom(\fF, \uHom(\fF^\vee, B \cO)) .
\end{equation*}
We also have a natural map $\fF \rightarrow (\fF^{\vee})^{\vee}$ by following $\id_{\fF^\vee}$ through the chain of equivalences above.

\begin{lemma} \label{lem:vbcx}
Suppose that $\fF$ is represented by a $2$-term complex of vector bundles $\bF_\bullet$.  Then $\fF^\vee$ is represented by the complex $\bF_\bullet^\vee[1]$.
\end{lemma}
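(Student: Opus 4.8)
The plan is to exhibit a natural equivalence $\fF^\vee \simeq \Psi_{\bF_\bullet^\vee[1]}$ of additively fibered categories over $\sC$ by comparing sections over an arbitrary object and then invoking the $2$-categorical Yoneda lemma (Lemma~\ref{lem:morphs}). Throughout I write $\fF = \Psi_{\bF_\bullet}$ with $\bF_\bullet$ a $2$-term complex of vector bundles concentrated in degrees $[-1,0]$.

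First I would record the identification $B\cO \simeq \Psi_{\cO[0]}$: sending an extension $0 \to J \to Z \to \cO \to 0$ to the preimage of the unit section $1 \in \cO$, which is a $J$-torsor, is an equivalence of additively cofibered categories over $\sC$. Applied fibrewise, the same construction identifies $\uHom(\underline X, B\cO)$---whose value over $W \in \sC$ is the groupoid of $(X \tensor W)$-torsors---with $\Psi_{X^\vee[0]}$ when $X$ is locally free. Since locally free sheaves generate $\sC$ and all the constructions in sight are left exact in the argument being varied, it will suffice to carry out the comparison over locally free objects $X$; the general case then follows formally from left exactness.

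Next, for a locally free $X \in \sC$, using the placeholder notation of the appendix and the symmetry $\fE \tensor \fF \simeq \fF \tensor \fE$, I would compute
\[
\fF^\vee(X) = \Hom(\underline X, \fF^\vee) = \Hom(\underline X \tensor \fF, B\cO) = \Hom(\fF, \uHom(\underline X, B\cO)) \simeq \Hom(\Psi_{\bF_\bullet}, \Psi_{X^\vee[0]}),
\]
the last step by the previous paragraph. By Lemma~\ref{lem:morphs}~(i) this is equivalent to $\Psi_{X^\vee[0]}(\bF_\bullet[-1])$, i.e.\ the category of extensions of $X^\vee$ by the $2$-term complex $\bF_\bullet[-1]$. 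It then remains to match this, functorially in $X$, with $\Psi_{\bF_\bullet^\vee[1]}(X) = \Ext(\bF_\bullet^\vee[1], X)$: on isomorphism classes both are $\Hom_{D(\sC)}(\bF_\bullet^\vee[1], X[1])$, and on automorphisms of the zero object both are $\Hom_{D(\sC)}(\bF_\bullet^\vee[1], X)$, using that $\bF_\bullet$ is perfect so that $R\uHom(\bF_\bullet, X) \simeq \bF_\bullet^\vee \tensor X$. Spelling the comparison out at the level of the extension diagrams---the abelian butterflies appearing in the paper's discussion of $\Psi_{\bF_\bullet}(\bE^\bullet)$---exhibits it as an equivalence natural in $X$, and the $2$-categorical Yoneda lemma over $\sC$ then yields $\fF^\vee \simeq \Psi_{\bF_\bullet^\vee[1]}$.

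I expect the main obstacle to be the bookkeeping of shifts and duals, so that exactly one degree shift survives in the statement. That $[1]$ is the combination of the shift built into $B\cO \simeq \Psi_{\cO[0]}$ (the fact that $B\cO$ is a classifying stack) with the dualization implicit in the functor $\Psi_{(-)}$, which replaces a bundle $\bG$ by $\bG^\vee$ through $J \mapsto \Hom(\bG, J)$; getting these to cancel down to a single $[1]$ rather than a $[0]$ or a $[2]$ is the delicate point. A secondary nuisance is the reduction from locally free to arbitrary objects of $\sC$, which is precisely where the flatness of $\bF_\bullet$ and the left exactness of both constructions are needed.
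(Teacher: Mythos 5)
Your proof is correct, and after unwinding the formalism it rests on the same computation as the paper's, but it is packaged quite differently. The paper's proof is entirely hands-on: it identifies $\Hom(\fF, B\cO)$ with the category of diagrams $0 \to \cO \to X \to \bF_0 \to 0$ equipped with a lift $\bF_1 \to X$ of the differential, identifies a section of $[\bF_1^\vee/\bF_0^\vee]$ with the dual kind of diagram $0 \to \bF_0^\vee \to W \to \cO \to 0$ equipped with $W \to \bF_1^\vee$, and observes that $\uHom(-,\cO)$ exchanges the two because every term is locally free. You instead identify $B\cO \simeq \Psi_{\cO[0]}$ and $\uHom(\underline{X}, B\cO) \simeq \Psi_{X^\vee[0]}$, curry, and invoke Lemma~\ref{lem:morphs}~(i) to rewrite $\fF^\vee(X)$ as $\Psi_{X^\vee[0]}(\bF_\bullet[-1])$, i.e.\ as extensions $0 \to \bF_1 \to Z \to X^\vee \to 0$ with a map $Z \to \bF_0$ restricting to $d$; dualizing such a diagram gives $0 \to X \to Z^\vee \to \bF_1^\vee \to 0$ with a lift $\bF_0^\vee \to Z^\vee$ of $d^\vee$, which is precisely an object of $\Psi_{\bF_\bullet^\vee[1]}(X)$ --- the same duality step the paper uses, with your degree bookkeeping coming out right. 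What your version buys is that it treats all fibers $X$ uniformly (the paper only writes out trivial coefficients) and makes explicit the reduction from arbitrary to locally free $X$ via left exactness. The one step you should not wave at is the identification $\Hom(\fF \tensor \underline{X}, B\cO) \simeq \Hom(\Psi_{\bF_\bullet}, \Psi_{X^\vee[0]})$: the left-hand side is by the appendix's definition a category of bilinear pairings over $\tensor$, whereas Lemma~\ref{lem:morphs} concerns morphisms of additively cofibered categories over $\sC$, so you need the (easy, Yoneda-flavored, but nowhere stated) fact that a bilinear pairing $\fF \times \underline{X} \to B\cO$ is the same datum as a cocartesian functor $\fF \to \uHom(\underline{X}, B\cO)$ before Lemma~\ref{lem:morphs} applies.
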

\begin{proof}
$\Hom(\fF, B \cO)$ is the category of diagrams
\begin{equation} \label{eqn:10} \xymatrix{
& & & \bF_1 \ar[d] \ar[dl] \\
0 \ar[r] & \cO \ar[r] & X \ar[r] & \bF_0 \ar[r] & 0
} \end{equation}
where the second row is exact.  On the other hand, a section of $[\bF_1^\vee / \bF_0^\vee]$ can be viewed as an $\bF_1^\vee$-torsor $P$ and a $\bF_1^\vee$-equivariant map $P \rightarrow \bF_0^\vee$.  There is a canonical way to extend $P$ to an extension $W$ of $\cO$ by $\bF_1^\vee$ such that $P$ is the fiber of $W$ over the section $1 \in \cO$; the map $P \rightarrow \bF_0^\vee$ extends to a map $W \rightarrow \bF_1^\vee$ and we have a commutative diagram
\begin{equation} \label{eqn:21} \xymatrix{
0 \ar[r] & \bF_0^\vee \ar[r] \ar[d] & W \ar[r] \ar[dl] & \cO \ar[r] & 0 \\
& \bF_1^\vee 
} \end{equation}
in which the first row is exact.  Since $\bF_\bullet$ is a complex of locally free $\cO$-modules and $\cO$ is its own dual, diagrams of the forms~\eqref{eqn:10} and~\eqref{eqn:21} are exchanged by duality.
\end{proof}

\begin{corollary} \label{cor:vbdual}
If $\fF$ is a vector bundle stack then $\fF^\vee$ is also a vector bundle stack.
\end{corollary}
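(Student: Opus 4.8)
The plan is to reduce to the locally-free situation that is handled by Lemma~\ref{lem:vbcx}. Being a vector bundle stack is, by \cite[Definition~1.9]{BF}, an \'etale-local condition on the base: it says exactly that $X$ has an \'etale cover on each member $U$ of which $\fF|_U$ is isomorphic to a quotient $[\bF_0/\bF_1]$ of a vector bundle by (the action, via a linear map $\bF_1 \to \bF_0$, of) a vector bundle; equivalently, on $U$ the stack $\fF$ is represented by a two-term complex $\bF_\bullet$ of vector bundles concentrated in degrees $[-1,0]$. Moreover the formation of $\fF^\vee = \uHom(\fF, B\cO)$ commutes with restriction to such a $U$, since $\uHom$ is built from the fibered category of completions of the relevant tensor diagram, a construction that is manifestly local on the base. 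It therefore suffices to prove the corollary after replacing $X$ by $U$, i.e.\ under the assumption that $\fF$ is represented by such a complex $\bF_\bullet$.

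In that case Lemma~\ref{lem:vbcx} applies directly: it identifies $\fF^\vee$ with the stack represented by $\bF_\bullet^\vee[1]$, which by the proof of that lemma is the quotient $[\bF_1^\vee/\bF_0^\vee]$. It then remains only to note the trivial bookkeeping: $\bF_0^\vee$ and $\bF_1^\vee$ are again vector bundles, and dualizing a two-term complex in degrees $[-1,0]$ and shifting by $[1]$ produces another two-term complex in degrees $[-1,0]$. Hence $\fF^\vee|_U$ is again of the form $[\,\text{vector bundle}/\text{vector bundle}\,]$, and since this holds on an \'etale cover and the property is local, $\fF^\vee$ is a vector bundle stack.

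I do not anticipate a genuine obstacle here; essentially all of the content is already in Lemma~\ref{lem:vbcx}, together with the fact that duals of vector bundles are vector bundles. The one point I would be careful to spell out is the compatibility of $\uHom(-, B\cO)$ with \'etale localization, which is what licenses the reduction to the locally-free case; this follows formally from the local nature of the universal property defining $\uHom$, but it deserves an explicit sentence, since without it the local computation of $\fF^\vee$ via $\bF_\bullet^\vee[1]$ would not be justified.
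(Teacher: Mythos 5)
Your argument is correct and is exactly the intended one: the paper leaves the corollary unproved precisely because it follows from Lemma~\ref{lem:vbcx} by the \'etale-local reduction you describe, together with the observation that the dual of a two-term complex of vector bundles is again such a complex. Your extra remark about the compatibility of $\uHom(-,B\cO)$ with \'etale localization is a reasonable point to make explicit, but it does not change the route.
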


\begin{corollary} \label{cor:vbdd}
If $\fF$ is a vector bundle stack then the map $\fF \rightarrow (\fF^\vee)^\vee$ is an equivalence.
\end{corollary}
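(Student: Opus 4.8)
The plan is to reduce to an explicit computation with $2$-term complexes of vector bundles by applying Lemma~\ref{lem:vbcx} twice. The morphism $\fF \to (\fF^\vee)^\vee$ is defined intrinsically, by chasing $\id_{\fF^\vee}$ through the chain of equivalences
$\Hom(\fF^\vee,\fF^\vee) \simeq \Hom(\fF \tensor \fF^\vee, B\cO) \simeq \Hom(\fF, \uHom(\fF^\vee, B\cO))$,
so whether it is an equivalence may be tested locally on the site. We may therefore assume that $\fF$ is represented by a $2$-term complex of vector bundles $\bF_\bullet$ concentrated in degrees $[-1,0]$.

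First I would apply Lemma~\ref{lem:vbcx} to conclude that $\fF^\vee$ is represented by $\bF_\bullet^\vee[1]$, which is again a $2$-term complex of vector bundles in degrees $[-1,0]$ (this is the content of Corollary~\ref{cor:vbdual}). Applying Lemma~\ref{lem:vbcx} a second time, $(\fF^\vee)^\vee$ is represented by $(\bF_\bullet^\vee[1])^\vee[1]$. Since dualization commutes with shift up to the usual sign, $(\bF_\bullet^\vee[1])^\vee = \bF_\bullet^{\vee\vee}[-1]$, hence $(\bF_\bullet^\vee[1])^\vee[1] = \bF_\bullet^{\vee\vee}$; and because each $\bF_i$ is a vector bundle, the evaluation maps assemble into a canonical isomorphism of complexes $\bF_\bullet \xrightarrow{\sim} \bF_\bullet^{\vee\vee}$. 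So $(\fF^\vee)^\vee$ is represented by a complex canonically isomorphic to $\bF_\bullet$, and in particular $(\fF^\vee)^\vee \simeq \fF$ abstractly. It then remains to check that the \emph{intrinsic} biduality morphism $\fF \to (\fF^\vee)^\vee$ coincides with this equivalence rather than with some other morphism, for which one can invoke the description of $\Hom$'s of these stacks in Lemma~\ref{lem:morphs}: the biduality map corresponds, under the identifications above, to some class in $\Hom_{D}(\bF_\bullet^{\vee\vee},\bF_\bullet)$, and one must see that it is the evaluation isomorphism.

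I expect this last identification to be the main obstacle. It requires unwinding the two torsor presentations appearing in the proof of Lemma~\ref{lem:vbcx}: an extension of $\bF_\bullet$ by $\cO$ and an extension of $\bF_\bullet^\vee$ by $\cO$ are matched with sections of $[\bF_1^\vee/\bF_0^\vee]$ and $[\bF_1^{\vee\vee}/\bF_0^{\vee\vee}]$ by constructions that are interchanged by the evaluation pairing $\bF_\bullet \tensor \bF_\bullet^\vee \to \cO$. Tracing $\id_{\fF^\vee}$ — which under Lemma~\ref{lem:morphs} is the identity of $\bF_\bullet^\vee[1]$, and hence feeds into $\Hom(\fF \tensor \fF^\vee, B\cO)$ as the evaluation pairing — through the chain of equivalences should yield exactly the double-dual map $\bF_\bullet \to \bF_\bullet^{\vee\vee}$. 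Everything outside this bookkeeping with evaluation and coevaluation of vector bundles and with shifts is formal. Finally, having shown $\fF \to (\fF^\vee)^\vee$ to be an equivalence locally, and it being globally defined, it is an equivalence.
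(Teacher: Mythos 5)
Your proposal is correct and follows exactly the route the paper intends: the paper states Corollary~\ref{cor:vbdd} without proof, as an immediate consequence of applying Lemma~\ref{lem:vbcx} twice together with the biduality of vector bundles, which is precisely your argument. Your identification of the one nontrivial point---that the intrinsic map obtained by chasing $\id_{\fF^\vee}$ corresponds under these representations to the evaluation isomorphism $\bF_\bullet \rightarrow \bF_\bullet^{\vee\vee}$---is the right thing to check, and your sketch of it is sound.
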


\bibliographystyle{halpha}
\bibliography{obs}

\def\cprime{$'$}
\providecommand{\bysame}{\leavevmode\hbox to3em{\hrulefill}\thinspace}
\providecommand{\MR}{\relax\ifhmode\unskip\space\fi MR }
\providecommand{\MRhref}[2]{%
  \href{http://www.ams.org/mathscinet-getitem?mr=#1}{#2}
}
\providecommand{\href}[2]{#2}
\begin{thebibliography}{{Man}08}

\bibitem[AC11]{AC}
D.~{Abramovich} and Q.~{Chen}, \emph{{Stable logarithmic maps to
  Deligne--Faltings pairs II}}, ArXiv e-prints (2011).

\bibitem[ACW10]{ACW}
D.~{Abramovich}, C.~{Cadman}, and J.~{Wise}, \emph{{Relative and orbifold
  Gromov-Witten invariants}}, ArXiv e-prints (2010).

\bibitem[Ald08]{Ald}
Ettore Aldrovandi, \emph{2-gerbes bound by complexes of {$gr$}-stacks, and
  cohomology}, J. Pure Appl. Algebra \textbf{212} (2008), no.~5, 994--1038.
  \MR{2387582 (2009c:18008)}

\bibitem[AMW]{AMW}
D.~Abramovich, S.~Marcus, and J.~Wise, \emph{Relative {G}romov--{W}itten
  theories}, in preparation.

\bibitem[AN09]{AN1}
Ettore Aldrovandi and Behrang Noohi, \emph{Butterflies. {I}. {M}orphisms of
  2-group stacks}, Adv. Math. \textbf{221} (2009), no.~3, 687--773. \MR{2511036
  (2010c:18009)}

\bibitem[AN10]{AN2}
\bysame, \emph{Butterflies {II}: torsors for 2-group stacks}, Adv. Math.
  \textbf{225} (2010), no.~2, 922--976. \MR{2671184}

\bibitem[Art74]{versal}
M.~Artin, \emph{Versal deformations and algebraic stacks}, Invent. Math.
  \textbf{27} (1974), 165--189. \MR{0399094 (53 \#2945)}

\bibitem[BF97]{BF}
K.~Behrend and B.~Fantechi, \emph{The intrinsic normal cone}, Invent. Math.
  \textbf{128} (1997), no.~1, 45--88. \MR{1437495 (98e:14022)}

\bibitem[{Che}10]{Chen}
Q.~{Chen}, \emph{{Stable logarithmic maps to Deligne-Faltings pairs I}}, ArXiv
  e-prints (2010).

\bibitem[CMW11]{CMW}
R.~{Cavalieri}, S.~{Marcus}, and J.~{Wise}, \emph{{Polynomial families of
  tautological classes on $\mathcal{M}_{g,n}^{rt}$}}, ArXiv e-prints (2011).

\bibitem[Cos06]{Cos}
Kevin Costello, \emph{Higher genus {G}romov-{W}itten invariants as genus zero
  invariants of symmetric products}, Ann. of Math. (2) \textbf{164} (2006),
  no.~2, 561--601. \MR{2247968 (2007i:14057)}

\bibitem[FM98]{FM}
Barbara Fantechi and Marco Manetti, \emph{Obstruction calculus for functors of
  {A}rtin rings. {I}}, J. Algebra \textbf{202} (1998), no.~2, 541--576.
  \MR{1617687 (99f:14004)}

\bibitem[Gro64]{ega4-1}
A.~Grothendieck, \emph{\'{E}l\'ements de g\'eom\'etrie alg\'ebrique. {IV}.
  \'{E}tude locale des sch\'emas et des morphismes de sch\'emas. {I}}, Inst.
  Hautes \'Etudes Sci. Publ. Math. (1964), no.~20, 259. \MR{0173675 (30
  \#3885)}

\bibitem[Gro68]{ccct}
\bysame, \emph{Cat\'egories cofibr\'ees additives et complexe cotangent
  relatif}, Lecture Notes in Mathematics, No. 79, Springer-Verlag, Berlin,
  1968. \MR{0241495 (39 \#2835)}

\bibitem[Gro72a]{biext1}
\bysame, \emph{Biextension de faisceaux de groupes}, Lecture Notes in
  Mathematics, Vol. 288, ch.~VII, pp.~133--217, Springer-Verlag, Berlin, 1972,
  S{\'e}minaire de G{\'e}om{\'e}trie Alg{\'e}brique du Bois-Marie 1967--1969
  (SGA 7 I), Dirig{\'e} par A. Grothendieck. Avec la collaboration de M.
  Raynaud et D. S. Rim. \MR{0354656 (50 \#7134)}

\bibitem[Gro72b]{sga4-VIII}
\bysame, \emph{Foncteurs fibres, supports, \'etude cohomologique des morphismes
  finis}, Lecture Notes in Mathematics, Vol. 270, ch.~VIII, pp.~iv+418,
  Springer-Verlag, Berlin, 1972, S{\'e}minaire de G{\'e}om{\'e}trie
  Alg{\'e}brique du Bois-Marie 1963--1964 (SGA 4), Dirig{\'e} par M. Artin, A.
  Grothendieck et J. L. Verdier. Avec la collaboration de N. Bourbaki, P.
  Deligne et B. Saint-Donat. \MR{0354653 (50 \#7131)}

\bibitem[GS11]{GS}
M.~{Gross} and B.~{Siebert}, \emph{{Logarithmic Gromov-Witten invariants}},
  ArXiv e-prints (2011).

\bibitem[Ill71a]{perf}
L.~Illusie, \emph{G\'en\'eralit\'es sur les conditions de finitude dans les
  cat\'egories d\'eriv\'ees}, Lecture Notes in Mathematics, Vol. 225,
  Springer-Verlag, Berlin, 1971, S{\'e}minaire de G{\'e}om{\'e}trie
  Alg{\'e}brique du Bois-Marie 1966--1967 (SGA 6), Dirig{\'e} par P. Berthelot,
  A. Grothendieck et L. Illusie. Avec la collaboration de D. Ferrand, J. P.
  Jouanolou, O. Jussila, S. Kleiman, M. Raynaud et J. P. Serre. \MR{0354655 (50
  \#7133)}

\bibitem[Ill71b]{Ill1}
Luc Illusie, \emph{Complexe cotangent et d\'eformations. {I}}, Lecture Notes in
  Mathematics, Vol. 239, Springer-Verlag, Berlin, 1971. \MR{0491680 (58
  \#10886a)}

\bibitem[Ill72]{Ill2}
\bysame, \emph{Complexe cotangent et d\'eformations. {II}}, Lecture Notes in
  Mathematics, Vol. 283, Springer-Verlag, Berlin, 1972. \MR{0491681 (58
  \#10886b)}

\bibitem[Kim10]{Kim}
Bumsig Kim, \emph{Logarithmic stable maps}, New developments in algebraic
  geometry, integrable systems and mirror symmetry ({RIMS}, {K}yoto, 2008),
  Adv. Stud. Pure Math., vol.~59, Math. Soc. Japan, Tokyo, 2010, pp.~167--200.
  \MR{2683209}

\bibitem[KKP03]{KKP}
Bumsig Kim, Andrew Kresch, and Tony Pantev, \emph{Functoriality in intersection
  theory and a conjecture of {C}ox, {K}atz, and {L}ee}, J. Pure Appl. Algebra
  \textbf{179} (2003), no.~1-2, 127--136. \MR{1958379 (2003m:14088)}

\bibitem[Li01]{Li1}
Jun Li, \emph{Stable morphisms to singular schemes and relative stable
  morphisms}, J. Differential Geom. \textbf{57} (2001), no.~3, 509--578.
  \MR{1882667 (2003d:14066)}

\bibitem[Li02]{Li2}
\bysame, \emph{A degeneration formula of {GW}-invariants}, J. Differential
  Geom. \textbf{60} (2002), no.~2, 199--293. \MR{1938113 (2004k:14096)}

\bibitem[LT98]{LT}
Jun Li and Gang Tian, \emph{Virtual moduli cycles and {G}romov-{W}itten
  invariants of algebraic varieties}, J. Amer. Math. Soc. \textbf{11} (1998),
  no.~1, 119--174. \MR{1467172 (99d:14011)}

\bibitem[{Man}08]{Man}
C.~{Manolache}, \emph{{Virtual pull-backs}}, ArXiv e-prints (2008).

\bibitem[Mat80]{M2}
Hideyuki Matsumura, \emph{Commutative algebra}, second ed., Mathematics Lecture
  Note Series, vol.~56, Benjamin/Cummings Publishing Co., Inc., Reading, Mass.,
  1980. \MR{575344 (82i:13003)}

\bibitem[Ols06]{Ols-def}
Martin~C. Olsson, \emph{Deformation theory of representable morphisms of
  algebraic stacks}, Math. Z. \textbf{253} (2006), no.~1, 25--62. \MR{2206635
  (2006i:14010)}

\bibitem[Ols07]{Ols-cc}
Martin Olsson, \emph{Sheaves on {A}rtin stacks}, J. Reine Angew. Math.
  \textbf{603} (2007), 55--112. \MR{2312554 (2008b:14002)}

\bibitem[Oss10]{Oss}
Brian Osserman, \emph{Deformations and automorphisms: a framework for
  globalizing local tangent and obstruction spaces}, Ann. Sc. Norm. Super. Pisa
  Cl. Sci. (5) \textbf{9} (2010), no.~3, 581--633. \MR{2722657}

\bibitem[Rim72]{Rim}
D.\~S. Rim, \emph{Formal deformation theory}, Lecture Notes in Mathematics,
  Vol. 288, ch.~VI, pp.~32--132, Springer-Verlag, Berlin, 1972, S{\'e}minaire
  de G{\'e}om{\'e}trie Alg{\'e}brique du Bois-Marie 1967--1969 (SGA 7 I),
  Dirig{\'e} par A. Grothendieck. Avec la collaboration de M. Raynaud et D. S.
  Rim. \MR{0354656 (50 \#7134)}

\bibitem[Sch68]{Schless}
Michael Schlessinger, \emph{Functors of {A}rtin rings}, Trans. Amer. Math. Soc.
  \textbf{130} (1968), 208--222. \MR{0217093 (36 \#184)}

\bibitem[Wisa]{predef}
J.~Wise, \emph{Obstructions to deformations of pre-deformable morphisms}, in
  preparation.

\bibitem[Wisb]{gst}
Jonathan Wise, \emph{Group stacks and torsors}, in preparataion.

\bibitem[{Wis}11]{def2}
J.~{Wise}, \emph{{The deformation theory of sheaves of commutative rings II}},
  ArXiv e-prints (2011).

\end{thebibliography}

\end{document}